\let\mathbb\mathds
\DeclareMathAlphabet\mathbfcal{OMS}{cmsy}{b}{n}
\pgfplotsset{compat=1.13}
\tikzset{curve/.style={settings={#1},to path={(\tikztostart)
    .. controls ($(\tikztostart)!\pv{pos}!(\tikztotarget)!\pv{height}!270:(\tikztotarget)$)
    and ($(\tikztostart)!1-\pv{pos}!(\tikztotarget)!\pv{height}!270:(\tikztotarget)$)
    .. (\tikztotarget)\tikztonodes}},
    settings/.code={\tikzset{quiver/.cd,#1}
        \def\pv##1{\pgfkeysvalueof{/tikz/quiver/##1}}},
    quiver/.cd,pos/.initial=0.35,height/.initial=0}
\tikzset{tail reversed/.code={\pgfsetarrowsstart{tikzcd to}}}
\tikzset{2tail/.code={\pgfsetarrowsstart{Implies[reversed]}}}
\tikzset{2tail reversed/.code={\pgfsetarrowsstart{Implies}}}
\tikzset{no body/.style={/tikz/dash pattern=on 0 off 1mm}}
\tikzset{Rightarrow/.style={double equal sign distance,>={Implies},->},
  triple/.style={-,preaction={draw,Rightarrow}},
  quadruple/.style={preaction={draw,Rightarrow,shorten >=0pt},shorten >=1pt,-,double,double
    distance=0.2pt}}
\def\on{\operatorname}
\def\CC{\mathbb{C}}
\def\C{\EuScript{C}}
\def\D{\EuScript{D}}
\def\DD{\mathbb{D}}
\def\bS{\textbf{S}}
\def\FF{\mathbb{F}}
\def\Fun{\on{Fun}}
\def\Cat{\on{Cat}}
\def\Hom{\on{Hom}}
\def\D{\EuScript{D}}
\def\Nerv{\on{N}}
\def\Nsc{\on{N}^{\on{sc}}}
\def\Set{\on{Set}}
\def\scsSet{{\on{Set}_{\Delta}^{\mathbf{sc}}}}
\def\mbsSet{{\Set_\Delta^{\mathbf{mb}}}}
\def\ST{\mathbb{S}\!\on{t}}
\def\UN{\mathbb{U}\!\on{n}}
\def\sc{{\on{sc}}}
\DeclarePairedDelimiterX\set[1]{\lbrace}{\rbrace}{#1}
\newlist{implications}{description}{1} 
\setlist[implications]{itemsep=0pt,leftmargin=\parindent}
\NewDocumentCommand\implication{o}
  {\IfValueTF{#1}
    {\auximplication#1\relax}
    {\item[\normalfont($\,\Rightarrow\,$)]}}
\NewDocumentCommand\auximplication{u-u\relax}
  {\item[\normalfont(#1)$\,\Rightarrow\,$(#2)]}
\newcounter{diagram}[section]
\def\thediagram{\thesection.\arabic{diagram}}
\def\ftype@diagram{4}
\def\ext@diagram{diag}
\def\fnum@diagram{Diagram~\thediagram}
\def\fs@diagram{htbp!}
\NewDocumentEnvironment{diagram}{O{htbp!}m}
  {\@float{diagram}[#1]\centering}
  {
   
   \caption{}
   \label{#2}
   \end@float
  }
\newcounter{subdiagram}[diagram]
\def\thesubdiagram{\thediagram.\arabic{subdiagram}}
\NewDocumentCommand\domultidiagram{omu\enddomultidiagram}
 {
  \IfValueTF{#1}{\diagram[#1]}{\diagram}{}
   \refstepcounter{diagram}
   \centering
    \seq_clear:N \l_tmpb_seq
    \seq_set_split:Nnn \l_tmpa_seq { \next } { #3 }
    \seq_map_inline:Nn \l_tmpa_seq
     {
      \seq_put_right:Nn \l_tmpb_seq
       {
        \begin{tabular}[b]{@{}c@{}}
         ##1 \\[3ex]
         \refstepcounter{subdiagram}
         \label{#2\othercolon\the\value{subdiagram}}
         Diagram~\thesubdiagram 
        \end{tabular}
       }
     }
    \seq_use:Nn \l_tmpb_seq { \qquad }
   \let\label\@gobble
   \let\caption\@gobble
  \enddiagram
 }
\def\othercolon{:}
\declaretheoremstyle[bodyfont=\itshape,notefont=\bfseries]{abellanA}
\declaretheoremstyle[notefont=\bfseries]{abellanB}
\declaretheorem[style=abellanA,numberwithin=section,name={Theorem}]{theorem}
\declaretheorem[style=abellanA,numberlike=theorem,name={Lemma}]{lemma}
\declaretheorem[style=abellanB,numberlike=theorem,name={Definition}]{definition}
\declaretheorem[style=abellanB,numberlike=theorem,name={Remark}]{remark}
\declaretheorem[style=abellanB,numberlike=theorem,name={Construction}]{construction}
\declaretheorem[style=abellanA,numberlike=theorem,name={Proposition}]{proposition}
\declaretheorem[style=abellanB,numbered=no,name={Notation}]{notation}
\declaretheorem[style=abellanA,numberlike=theorem,name={Corollary}]{corollary}
\newtheorem*{thm*}{Theorem}
\newtheorem*{prop*}{Proposition}
\newtheorem*{cor*}{Corollary}
\let\leq\leqslant
\let\geq\geqslant
\let\epsilon\varepsilon
\let\isom\simeq
\newcommand*\mathblank{\mathord{-}}
\DeclareMathOperator*\colimdag{colim^\dagger}
\DeclareMathOperator*\colim{colim}
\def\msSet{{\on{Set}_{\Delta}^+}}
\def\mssSet{{\on{Set}^{\mathbf{ms}}_{\Delta}}}
\def\Cat{\on{Cat}}
\let\emptyset\varnothing
\newcommand{\fixed@sra}{$\vrule height 2\fontdimen22\textfont2 width 0pt\rightarrow$}
\newcommand{\shortarrowup}[1]{%
  \mathrel{\text{\rotatebox[origin=c]{65}{\fixed@sra}}}
}
\newcommand{\shortarrowdown}[1]{%
  \mathrel{\text{\rotatebox[origin=c]{250}{\fixed@sra}}}
}
\newcommand{\upslash}{\!\shortarrowup{1}}
\def\lra{\longrightarrow}
\def\lla{\longleftarrow}
\def\llra{\def\arraystretch{.1}\begin{array}{c} \lra \\ \lla \end{array}}
\def\op{{\on{op}}}
\newcommand*\dirlim{\mathop{\mathpalette\varlim@{\rightarrowfill@\scriptscriptstyle}}\nmlimits@}
\newcommand*\prolim{\mathop{\mathpalette\varlim@{\leftarrowfill@\scriptscriptstyle}}\nmlimits@}
\def\llra{\def\arraystretch{.1}\begin{array}{c} \lra \\ \lla \end{array}}
\newcommand{\nat}{\Rightarrow}
\tikzset{
  abellanarrows/.style={line cap=round,line join=round,line width=.4pt},
  abellanarrowlength/.store in=\abellanarrowlength,
}
\NewDocumentCommand \func { s O{} m }
 {
  \group_begin:
   \IfBooleanTF{#1}
    { \keys_set:nn { abellan / func } { aligned = true , #2 } }
    { \keys_set:nn { abellan / func } {#2} }
   \abellan_func:n {#3}
  \group_end:
 }
\NewDocumentCommand \arr { s o m }
 {
  \IfBooleanF{#1}
   { \bool_if:NT \l_abellan_aligned_bool { & } }
  \abellan_arr:n {#3}
 }
\NewDocumentCommand \addarr { o m m }
 {
  \keys_set:nn { abellan / func / addarrow } { name = {#2} , #3 }
  \tl_clear:N \l_abellan_arrname_tl
 }
\NewDocumentCommand \setupfunc { m } { \keys_set:nn { abellan / func } {#1} }
\tikzset{abellanarrowlength={#1}} ,
\tikzset{abellanarrows/.append ~ style={#1}} ,
\NewDocumentCommand \abellan_addarrow:nnww { m m O{} u\q_abellan }
 {
  \exp_args:Nc \NewDocumentCommand { abellan_arr_#1_#2:w } { #3 }
   {
    \use:c { abellan_arr_ \l_abellan_arrmode_tl :n } { #4 }
   }
 }
\newcommand*\resetdynamicto
\gdef\dynamicto{\arr*{to}\gdef\dynamicto{\arr*{mapsto}}}}
\NewDocumentCommand \printheader { m o m }
 {
  \par\noindent
  \begin{minipage}[t]{\textwidth}\noindent
  
  \begin{tabular}[t]{ll}
     & \keyval_parse:NNn \abellan_printname:n \abellan_printnamemail:nn { #3 } 
  \end{tabular}
  \vspace{.4cm}
  \end{minipage}
  \begin{center}\Large\bfseries
   #1 \IfValueT{#2}{\\[1ex] \large #2}
  \end{center}
  \vspace{.6cm}
 }
\quad\texttt{#2} \\ &
        \string\usetikzlibrary{decorations.markings} to use arrows with markings}{}}{}%
\def\mbsSet{\on{Set}_{\Delta}^{\mathbf{mb}}}
\def\bS{\textbf{MB}}
\def\scr{\EuScript}
\newcommand{\myitem}[1]{%
  \item[#1]\protected@edef\@currentlabel{#1}%
}
\DeclareSymbolFont{lettersA}{U}{txmia}{m}{it}
\DeclareRobustCommand*{\varmathbb}[1]{\gdef\F@ntPrefix{m@thbbch@r}%
	\@EachCharacter #1\@EndEachCharacter}
\long\def\DoLongFutureLet #1#2#3#4{%
	\def\@FutureLetDecide{#1#2\@FutureLetToken
		\def\@FutureLetNext{#3}\else
		\def\@FutureLetNext{#4}\fi\@FutureLetNext}
	\futurelet\@FutureLetToken\@FutureLetDecide}
\def\DoFutureLet #1#2#3#4{\DoLongFutureLet{#1}{#2}{#3}{#4}}
\def\@EachCharacter{\DoFutureLet{\ifx}{\@EndEachCharacter}%
	{\@EachCharacterDone}{\@PickUpTheCharacter}}
\def\m@keCharacter#1{\csname\F@ntPrefix#1\endcsname}
\def\@PickUpTheCharacter#1{\m@keCharacter{#1}\@EachCharacter}
\def\@EachCharacterDone \@EndEachCharacter{}
\DeclareMathSymbol{\m@thbbch@rA}{\mathord}{lettersA}{129}
\DeclareMathSymbol{\m@thbbch@rB}{\mathord}{lettersA}{130}
\DeclareMathSymbol{\m@thbbch@rC}{\mathord}{lettersA}{131}
\DeclareMathSymbol{\m@thbbch@rD}{\mathord}{lettersA}{132}
\DeclareMathSymbol{\m@thbbch@rE}{\mathord}{lettersA}{133}
\DeclareMathSymbol{\m@thbbch@rF}{\mathord}{lettersA}{134}
\DeclareMathSymbol{\m@thbbch@rG}{\mathord}{lettersA}{135}
\DeclareMathSymbol{\m@thbbch@rH}{\mathord}{lettersA}{136}
\DeclareMathSymbol{\m@thbbch@rI}{\mathord}{lettersA}{137}
\DeclareMathSymbol{\m@thbbch@rJ}{\mathord}{lettersA}{138}
\DeclareMathSymbol{\m@thbbch@rK}{\mathord}{lettersA}{139}
\DeclareMathSymbol{\m@thbbch@rL}{\mathord}{lettersA}{140}
\DeclareMathSymbol{\m@thbbch@rM}{\mathord}{lettersA}{141}
\DeclareMathSymbol{\m@thbbch@rN}{\mathord}{lettersA}{142}
\DeclareMathSymbol{\m@thbbch@rO}{\mathord}{lettersA}{143}
\DeclareMathSymbol{\m@thbbch@rP}{\mathord}{lettersA}{144}
\DeclareMathSymbol{\m@thbbch@rQ}{\mathord}{lettersA}{145}
\DeclareMathSymbol{\m@thbbch@rR}{\mathord}{lettersA}{146}
\DeclareMathSymbol{\m@thbbch@rS}{\mathord}{lettersA}{147}
\DeclareMathSymbol{\m@thbbch@rT}{\mathord}{lettersA}{148}
\DeclareMathSymbol{\m@thbbch@rU}{\mathord}{lettersA}{149}
\DeclareMathSymbol{\m@thbbch@rV}{\mathord}{lettersA}{150}
\DeclareMathSymbol{\m@thbbch@rW}{\mathord}{lettersA}{151}
\DeclareMathSymbol{\m@thbbch@rX}{\mathord}{lettersA}{152}
\DeclareMathSymbol{\m@thbbch@rY}{\mathord}{lettersA}{153}
\DeclareMathSymbol{\m@thbbch@rZ}{\mathord}{lettersA}{154}
\def\bcat{\varmathbb}
\def\Fr{\on{Fr}}
\title{On cofinal functors of $\infty$-bicategories}
\author{Fernando Abellán \& Walker H. Stern}
\date{} 
\begin{document}
  \maketitle
  \begin{abstract}
  	In this work, we study the notion of cofinal functor of $\infty$-bicategories with respect to the theory of partially lax colimits. The main result of this paper is a characterization of cofinal functors of $\infty$-bicategories via generalizations of the conditions of Quillen's Theorem A. As a key ingredient for the proof of our main theorem we produce for every functor of $\infty$-bicategories $f:\bcat{C} \to \bcat{D}$ an outer 2-Cartesian fibration $\mathbb{F}(\bcat{C})\to \bcat{D}$ which we identify it as the free fibration on the functor $f$. 
  	\par\vskip\baselineskip\noindent
  	\textbf{Keywords}: Partially lax colimit, cofinality, Grothendieck construction, $(\infty,2)$-category.
  	\par\vskip\baselineskip\noindent
  	\textbf{MSC}: 18N65, 18N40, 18N99.
  \end{abstract}
  
  \tableofcontents
  
 \section{Introduction}
 \subsection{Partially lax (co)limits}
 As one tries to generalize the 1-categorical notion of colimits to 2-categories one runs into an immediate problem: which definition of colimit to use. Loosely speaking, any definition of a colimit should come equipped with a universal cone. However, if we consider a  2-functor $F:\CC\to \DD$, we run into an issue defining cones over $F$. A cone over $F$ with tip $d$ should consist of:
  \begin{itemize}
    \item For every object $c\in \CC$, a morphism $\alpha_c:F(c)\to d$.
    \item For every morphism $u:b\to c$ in $\CC$, a diagram 
    \[
    \begin{tikzcd}
    F(b)\arrow[dd,"F(u)"'] \arrow[dr,"\alpha_b"] & \\
    & d\\
    F(c)\arrow[ur,"\alpha_c"']
    \end{tikzcd}
    \]
    that commutes appropriately.
  \end{itemize}
  It is here that the definition flounders --- there are multiple 2-categorical notions which could be described as the diagram “commuting appropriately“, and each yields different notions of colimit. If one requires the triangles to commute up to non-invertible 2-morphism, for instance, one obtains the notion of a \emph{lax} colimit. If, on the other had, one requires commutativity up to invertible 2-morphism, the corresponding notion of colimit is the \emph{pseudo-colimit}. 

  One traditional way of resolving the multiplicity of definitions of 2-dimensional colimits is by defining the more general notion of \emph{weighted colimits}, which specialize to each of the above cases (see for example \cite{Kelly}). However, in the past years, a different (but equivalent) approach  has become relevant due to its amenability to applications in simplicial models for higher categories: \emph{partially lax colimits}. 

  In defining partially lax colimits, one considers an $(\infty,2)$-category $\bcat{C}$ equipped with a collection of \emph{marked} 1-morphisms (which we usually denote with a subscript notation $\bcat{C}^\dagger$), and then requires that the chosen 2-morphism making the triangle above commute is invertible whenever $u$ is a marked morphism. This resolution of the above issue loses nothing in comparison to $\Cat$-weighted or $\bcat{C}\!\on{at}_\infty$-weighted limits, as the two theories turn out to be equivalent (see \cite[Theorem 4.7]{AG_cof} and \cite[Section 5]{GHL_LaxLim} for more details). Although this definition of 2-categorical limit is in fact a novel concept in the study of $\infty$-category theory its use in the strict 2-categorical realm was already established as in seen in \cite{Dubuc_limits}.

  Before continuing with our general discussion we present some examples. Let $\Lambda^2_2$ be the poset consisting in three objects $0,1,2$ and morphisms $1 \to 2$ and $0 \to 2$ . Suppose we are given a diagram
\vskip 1em
 \[
  \begin{tikzcd}
  \scr{A} \arrow[rrd,"F"] &  &         &              &    &                         &              \\
                      &  & \scr{C} & {} \arrow[squiggly,r] & {}  \arrow[phantom,l]& D:\Lambda^2_2 \arrow[r] & \bcat{C}\!\on{at}_\infty \\
  \scr{B} \arrow[rru,"G"] &  &         &              &    &                         &             
  \end{tikzcd}
 \]
  and let us compute the lax limit of $D$. We informally describe the lax limit  which we denote by $\scr{A} \times^\flat_{\scr{C}}\scr{B}$ as follows:
  \begin{itemize}
    \item Objects are given the following data: A triple of objects $a \in \scr{A}$, $b \in \scr{B}$ and $c \in \C$ together with morphisms $\alpha_a:F(a) \to c$ and $\alpha_b:G(b) \to c$.
    \item A morphism between $(a,b,c,\alpha_a,\alpha_b) \to (a',b',c',\alpha_{a'},\alpha_{b'})$ is given by morphisms $a \to a'$, $b \to b'$ and $c \to c'$ and a commutative diagram in $\C$
    \[
      \begin{tikzcd}
        F(a) \arrow[d] \arrow[rr,"\alpha_a"] &  & c \arrow[d] &  & G(b) \arrow[ll,"\alpha_b",swap] \arrow[d] \\
        F(a') \arrow[rr,"\alpha_{a'}"]          &  & c'          &  & G(b') \arrow[ll,"\alpha_{b'}",swap] .        
      \end{tikzcd}
    \]
  \end{itemize}
  This construction is known as the lax pullback and has been used in \cite{Exci} to understand the failure of excision for any localising invariant. The pseudolimit $\scr{A} \times^\sharp_{\scr{C}}\scr{B}$ (which coincides with the usual $\infty$-categorical pullback) is the full subcategory of $\scr{A} \times^\flat_{\scr{C}}\scr{B}$ on those tuples $(a,b,c,\alpha_a,\alpha_b)$ such that both $\alpha_a$ and $\alpha_b$ are equivalences.

 Now, let us suppose that $\Lambda^2_2$ comes equipped with a marking consisting on the edge $0 \to 2$ which corresponds to the functor $G:\scr{B} \to \scr{C}$. In this case we denote the partially lax limit by $\scr{A}\overset{\to}{\times}_\C \scr{B}$. One can show that the marking-depending limit is given by the subcategory consisting in tuples $(a,b,c,\alpha_a,\alpha_b)$ such that $\alpha_b$ is an equivalence in $\C$. In this situation we can describe the $\infty$-category  $\scr{A}\overset{\to}{\times}_\C \scr{B}$  as follows:
\begin{itemize}
  \item Objects are given by triples $(a,b,\alpha)$ where $a \in \scr{A}$, $b \in \scr{A}$ and $\alpha:F(a) \to G(b)$. 
  \item A morphism $(a,b,\alpha) \to (a',b',\alpha')$ is given by morphisms $a \to a'$, $b \to b'$ together with a commutative diagram
  \[
    \begin{tikzcd}
      F(a) \arrow[d] \arrow[r,"\alpha"] & F(b) \arrow[d] \\
      F(a') \arrow[r,"\alpha'"] & F(b')
    \end{tikzcd}
  \]
\end{itemize}
One analogously defines the $\infty$-category $\scr{A}\overset{\xleftarrow{}}{\times}_\C \scr{B}$ which corresponds to the partially lax limit of $D:\Lambda^2_2 \to \bcat{C}\!\on{at}_\infty$ where we are marking the edge $0 \to 1$ in $\Lambda^2_2$.

 Even in simpler examples, the theory of (co)limits in $\infty$-bicategories is capable of capturing interesting phenomena. For example, given an exact functor of stable $\infty$-categories $F:\scr{A} \to \scr{B}$ viewed  as a diagram $E:\Delta^1 \to \mathbb{S}\!\on{t}$ with values in the $\infty$-bicategory of stable $\infty$-categories and exact functor, the lax limit of E is a stable $\infty$-category which corresponds to a semiorthogonal decomposition (see \cite{Bondal}) of $\scr{A}$ and $\scr{B}$ along the gluing functor $F$. In particular, this allows us to characterize a semiorthogonal decomposition of stable $\infty$-categories in terms of a 2-dimensional universal property.

 \subsection{The cofinality theorem}
Let $f:\bcat{C}^\dagger \to \bcat{D}^\dagger$ be a marking-preserving functor of $\infty$-bicategories. We say that $f$ is \emph{marked cofinal} if for every diagram $F: \bcat{D} \to \bcat{A}$ the canonical comparison map
\[
  \begin{tikzcd}[ampersand replacement=\&]
    \colim_{\bcat{C}}^\dagger F f \arrow[r,"\simeq"] \& \colim_{\bcat{D}}^\dagger F
  \end{tikzcd}
\]
is an equivalence in $\bcat{A}$.

The main theorem of this paper gives a complete characterization of cofinal functors. Let us introduce some preliminary notation to better understand our main result.

\begin{definition}\label{defn:upslice}
  Let $f:\bcat{C}^\dagger \to \bcat{D}^\dagger$ be a marking preserving functor of $\infty$-bicategories. Given an object $d \in \bcat{D}$ we define the marked comma $\infty$-bicategory $\bcat{C}^\dagger_{d\upslash}$  as follows:
  \begin{itemize}[noitemsep]
    \item Objects are given by pairs $(u,c)$ where $u:d \to f(c)$ is a morphism in $\bcat{D}$ with source $d$ and $c$ is an object of $\bcat{C}$. 
    \item A 1-morphism from $u: d \to f(c)$ to $v:d \to f(c')$ is given by a 1-morphism $\alpha: c \to c'$ in $\bcat{C}$ and a 2-morphism $f(\alpha) \circ u \xRightarrow{} v$.
    \item A 2-morphism in $\bcat{C}^\dagger_{d\upslash}$ is given by a 2-morphism $\epsilon: \alpha  \nat \beta$ such that the diagram below commutes
    \[
    \begin{tikzcd}
    f(\alpha) \circ u \arrow[dd, Rightarrow,"f(\epsilon)*u",swap] \arrow[rrd, Rightarrow] &  &   \\
    &  & v \\
    f(\beta) \circ u \arrow[rru, Rightarrow]                        &  &  
    \end{tikzcd}
    \]
    \item A morphism in $\bcat{C}^\dagger_{d\upslash}$ is marked precisely when $\alpha: c \to c'$ is marked in $\bcat{C}^\dagger$ and the associated 2-morphism is invertible.
  \end{itemize}
   If the functor $f$ is the identity on the marked $\infty$-bicategory $\bcat{D}^\dagger$ we will use the notation $\bcat{D}^\dagger_{d\upslash}$.
   \end{definition}
   We state our main theorem which can be found in \autoref{thm:cofinality}
   \begin{thm*}
    Let $f:\bcat{C}^\dagger \to \bcat{D}^\dagger$ be a marking-preserving functor of $\infty$-bicategories. Then the following statements are equivalent:
  \begin{enumerate}
    \item[i)] The functor $f$ is marked cofinal.
    \item[ii)] For every $d \in \bcat{D}$ the functor $f$ induces an equivalence of $\infty$-categorical localizations $L_W(\bcat{C}_{d\upslash}^\dagger) \to L_W(\bcat{D}_{d\upslash}^\dagger)$.
  \end{enumerate}
  \end{thm*}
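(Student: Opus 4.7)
The strategy rests on the free outer $2$-Cartesian fibration $\mathbb{F}(f): \mathbb{F}(\bcat{C}) \to \bcat{D}$ constructed earlier in the paper. The first step is to verify that for each $d \in \bcat{D}$, the fiber of $\mathbb{F}(f)$ at $d$ is equivalent, as a marked $\infty$-bicategory, to the comma $\bcat{C}_{d\upslash}^\dagger$ of \autoref{defn:upslice}. Specialising $f = \id_\bcat{D}$ recovers $\bcat{D}_{d\upslash}^\dagger$ as the fiber of $\mathbb{F}(\id_\bcat{D})$ at $d$, so that the canonical morphism of fibrations $\mathbb{F}(f) \to \mathbb{F}(\id_\bcat{D})$ induces, fibrewise, precisely the comparison maps in condition~(ii). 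The theorem then takes the shape of a $2$-dimensional Quillen Theorem~A.

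For the implication (ii) $\Rightarrow$ (i), I would combine the marked straightening-unstraightening equivalence for outer $2$-Cartesian fibrations with the universal property of $\mathbb{F}$ to derive a pointwise formula for partially lax colimits: for any $F: \bcat{D} \to \bcat{A}$, the colimit $\colim^\dagger_\bcat{D} F$ should be expressible as a colimit of $F$ weighted by the family $\{L_W(\bcat{D}_{d\upslash}^\dagger)\}_{d \in \bcat{D}}$, while $\colim^\dagger_\bcat{C}(F \circ f)$ is weighted by $\{L_W(\bcat{C}_{d\upslash}^\dagger)\}_{d \in \bcat{D}}$. The canonical comparison map is then induced by the pointwise maps of weights, which are equivalences by hypothesis, so cocontinuity of weighted colimits in the weight variable gives the conclusion.

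For the converse (i) $\Rightarrow$ (ii), I would test cofinality on diagrams that universally detect slice localizations. Fixing $d \in \bcat{D}$, one constructs a functor $F_d: \bcat{D} \to \bcat{C}\!\on{at}_\infty$, corepresented up to localization by $\mathbb{F}(\id_\bcat{D})$ at the fiber over $d$, whose partially lax colimit over $\bcat{D}$ computes $L_W(\bcat{D}_{d\upslash}^\dagger)$ and whose restriction along $f$ has colimit $L_W(\bcat{C}_{d\upslash}^\dagger)$; applying (i) to each such $F_d$ yields the required equivalence.

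The main obstacle is the pointwise/weighted-colimit formula invoked in the first implication, as it demands a careful fusion of three distinct pieces of structure: the outer $2$-Cartesian straightening, the marking on $\mathbb{F}(\bcat{C})$ that records the non-invertible 2-morphisms appearing in \autoref{defn:upslice}, and the passage to $\infty$-categorical localizations. Once such a formula is established, the representable diagrams $F_d$ needed in the second implication should be accessible through a marked enriched Yoneda argument built on top of the free fibration.
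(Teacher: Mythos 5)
Your scaffolding (the free fibration $\mathbb{F}(\bcat{C})\to\bcat{D}$, identification of its fibres with the lax slices, the map of fibrations inducing the comparisons in (ii)) agrees with the paper's setup, but the engine you propose to run on it is different and, as written, has a genuine gap: the ``pointwise formula'' expressing $\colim^\dagger_{\bcat{C}}(Ff)$ as a colimit of $F$ weighted by the family $\{L_W(\bcat{C}^\dagger_{d\upslash})\}_{d\in\bcat{D}}$ is not an available tool but is itself essentially the hard content of the theorem. You flag it as ``the main obstacle'' and offer no argument; making it precise would require (1) assembling the slices into an actual functor $\bcat{D}^\op\to\bcat{C}\!\on{at}_\infty$ --- which is exactly what the free fibration together with the Grothendieck construction of \cite{AGS_CartII} is needed for, (2) a base-change statement identifying restriction of diagrams along $f$ with left Kan extension of weights in the partially lax $(\infty,2)$-setting, and (3) the check that the resulting comparison of weighted colimits coincides with the canonical map $\colim^\dagger_{\bcat{C}}Ff\to\colim^\dagger_{\bcat{D}}F$ appearing in the definition of cofinality. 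Likewise, your converse direction rests on the computation $\colim^\dagger_{\bcat{D}}\Map_{\bcat{D}}(d,-)\simeq L_W(\bcat{D}^\dagger_{d\upslash})$ (and its analogue after restriction along $f$), a bicategorical Yoneda-type statement that is nowhere established in your sketch and is of comparable depth to the statement being proved.

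The paper never manipulates colimit formulas at all. By \cite[Thm 5.4.4]{GHL_LaxLim} (see \autoref{defn:markedcofinal} and \autoref{rmk:GHL_colim_recap}), $f$ is marked cofinal if and only if $(\bcat{C},E_{\bcat{C}},T_{\bcat{C}}\subset\sharp)\to(\bcat{D},E_{\bcat{D}},T_{\bcat{D}}\subset\sharp)$ is a weak equivalence in the model structure for outer 2-Cartesian fibrations over $\bcat{D}$. Then \autoref{thm:thebigfreeboi} and \autoref{lem:IffII} show that $\bcat{C}^\dagger_{\sharp}\to\mathbb{F}(\bcat{C})^\dagger_{\sharp}$ is \bS-anodyne, so cofinality becomes the assertion that $\mathbb{F}(\bcat{C})^\dagger_{\sharp}\to\mathbb{F}(\bcat{D})^\dagger_{\sharp}$ is a weak equivalence over $\bcat{D}$; the straightening argument in \autoref{prop:prebigtheorem} identifies the fibres of the fibrant replacements with $L_W(\bcat{C}^\dagger_{d\upslash})$ and $L_W(\bcat{D}^\dagger_{d\upslash})$, and the fibrewise criterion \cite[Prop. 4.25]{AGS_CartI} then yields both implications simultaneously. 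If you wish to pursue the weighted-colimit route, be aware that you would in effect be reproving the cofinality--fibration comparison of \cite{GHL_LaxLim} together with the identification of the straightening of $\mathbb{F}(\bcat{C})^\dagger$ with $d\mapsto L_W(\bcat{C}^\dagger_{d\upslash})$; in other words, the unproved lemma in your sketch carries essentially the full weight of the proof.
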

  We would like to point out that in \autoref{thm:cofinality} we give an equivalent set of conditions to that of $i)$ and $ii)$ above which are of more computational nature. 
  
  First, let us derive some corollaries from the previous theorem. Let us suppose that $\bcat{C}^\dagger=\C^{\sharp}$, $\bcat{D}^{\dagger}=\D^\sharp$, that is, both $\infty$-bicategories are actually $\infty$-categories with all morphisms being marked. Then since for every $d \in \D$ the $\infty$-category  $\D_{d/}$ has an initial object it follows that the $\infty$-categorical localization at all morphisms (which is precisely given by the geometric realization) must be a contractible space $L_W(\D_{d/}^\sharp) \isom |\D_{d/}|\isom *$. Then the second statement in our theorem collapses to:
\begin{itemize}
  \item For every $d \in \D$, the geometric realization of the comma category $|\C_{d/}|\isom *$ is contractible. 
\end{itemize}
In order words, our theorem recovers the characterization of cofinal functors of $\infty$-categories due to Joyal. In a similar way as the original theorem of Quillen can be recovered from the main cofinality statement, in our situation we can obtain the following generalization of Quillen's Theorem A.
\begin{cor*}
  Let $f:\bcat{C}^\dagger \to \bcat{D}^\dagger$ be a marking-preserving functor of $\infty$-bicategories and suppose that the following condition holds
  \begin{itemize}
    \item For every $d \in \bcat{D}$ the functor $f$ induces an equivalence of $\infty$-categorical localizations $L_W(\bcat{C}_{d\upslash}^\dagger) \to L_W(\bcat{D}_{d\upslash}^\dagger)$.
  \end{itemize}
  Then $f$ induces an equivalence upon passage to $\infty$-categorical localizations
  \[
    \begin{tikzcd}
      L_W(f): L_W\left(\bcat{C}^\dagger\right) \arrow[r,"\simeq"] & L_W(\bcat{D}^\dagger).
    \end{tikzcd}
  \]
\end{cor*}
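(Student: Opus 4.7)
The plan is to deduce this as a direct application of the main cofinality theorem (\autoref{thm:cofinality}) to a single, carefully chosen diagram. The hypothesis of the corollary is precisely condition (ii) of the theorem, so the implication (ii)$\,\Rightarrow\,$(i) immediately gives that $f$ is marked cofinal. In particular, for every diagram $F: \bcat{D}^\dagger \to \bcat{A}$ with sufficiently cocomplete target $\bcat{A}$, the comparison map
\[
\colim_{\bcat{C}}^\dagger Ff \;\lra\; \colim_{\bcat{D}}^\dagger F
\]
is an equivalence in $\bcat{A}$.

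I would then instantiate this with $\bcat{A}=\bcat{C}\!\on{at}_\infty$ and with $F=\underline{\ast}$, the constant diagram valued at the terminal $\infty$-category. The key input needed is the identification
\[
\colim_{\bcat{E}}^\dagger \underline{\ast} \;\simeq\; L_W(\bcat{E}^\dagger)
\]
valid for any marked $\infty$-bicategory $\bcat{E}^\dagger$. This is the $2$-dimensional counterpart of the familiar $1$-categorical fact that the colimit of the constant diagram at a point computes the classifying space: a partially lax cone on $\underline{\ast}$ with tip an $\infty$-category $\mathcal{A}$ unwinds to precisely the data of a functor $\bcat{E}^\dagger \to \mathcal{A}$ that inverts the marked morphisms, so the universal properties of $\colim_{\bcat{E}}^\dagger \underline{\ast}$ and $L_W(\bcat{E}^\dagger)$ coincide.

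Granting this, marked cofinality applied to $\underline{\ast}: \bcat{D} \to \bcat{C}\!\on{at}_\infty$ yields the chain
\[
L_W(\bcat{C}^\dagger) \simeq \colim_{\bcat{C}}^\dagger \underline{\ast} = \colim_{\bcat{C}}^\dagger (\underline{\ast} \circ f) \xrightarrow{\simeq} \colim_{\bcat{D}}^\dagger \underline{\ast} \simeq L_W(\bcat{D}^\dagger),
\]
and one checks that the composite is precisely $L_W(f)$ by tracing through the universal properties on both sides.

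The main obstacle is therefore not the cofinality step, which is handed to us by the main theorem, but rather the identification of the partially lax colimit of the constant diagram with the marked localization. I expect this to follow either as a formal consequence of an adjunction between the inclusion of $\infty$-categories into marked $\infty$-bicategories and the marked localization functor, or directly via the $2$-dimensional straightening/unstraightening package developed earlier in the paper: the outer $2$-Cartesian fibration classifying $\underline{\ast}$ is essentially $\bcat{E}^\dagger$ itself, and the resulting colimit formula recovers $L_W(\bcat{E}^\dagger)$ on the nose.
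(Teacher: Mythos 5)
Your proposal takes a genuinely different route from the paper. The first step is shared: the hypothesis is condition (2) of \autoref{thm:cofinality}, so $f$ is marked cofinal. But from that point on the paper never computes a colimit. In the paper, marked cofinality is \emph{by definition} (\autoref{defn:markedcofinal}) the statement that $(\bcat{C},E_{\bcat{C}},T_{\bcat{C}}\subset\sharp)\to(\bcat{D},E_{\bcat{D}},T_{\bcat{D}}\subset\sharp)$ is a weak equivalence in the \textbf{MB} model structure over $\bcat{D}$; the corollary is then obtained by transporting this along the pushforward $t_*\colon(\mbsSet)_{/\bcat{D}}\to\mbsSet$, which is left Quillen, hence preserves weak equivalences since all objects are cofibrant. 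The image of $f$ is the map $(\bcat{C},E_{\bcat{C}},\sharp)\to(\bcat{D},E_{\bcat{D}},\sharp)$ over the point, and a fibrant replacement of this map is exactly $L_W(f)\colon L_W(\bcat{C}^\dagger)\to L_W(\bcat{D}^\dagger)$. So the paper's argument is a two-line model-categorical transport, entirely internal to the machinery already set up.

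Your route instead applies the colimit-preservation formulation of cofinality to the constant diagram $\underline{\ast}\colon\bcat{D}\to\bcat{C}\!\on{at}_\infty$, and everything hinges on the identification $\colimdag_{\bcat{E}}\underline{\ast}\simeq L_W(\bcat{E}^\dagger)$, which you assert but do not prove. As written this is a genuine gap: the ``unwinding'' of a partially lax cone is only heuristic at the $(\infty,2)$-level, since one must produce a natural equivalence $\on{Nat}^{\on{gr}}_{\bcat{E}}(\underline{\ast},\underline{a})\simeq\on{Fun}(L_W(\bcat{E}^\dagger),a)$ in $a$, which involves the Gray tensor product and coherence of lax transformations and is established nowhere in this paper. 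The gap is fillable, but only by importing more of \cite{GHL_LaxLim} than the paper does: their description of partially lax colimits valued in $\bcat{C}\!\on{at}_\infty$ as localizations of the total space of the classifying fibration at the (co)Cartesian lifts of marked edges, applied to the constant terminal diagram (whose unstraightening is equivalent to $\bcat{E}$ itself), yields precisely your identification; one must then also check that the resulting comparison map is $L_W(f)$, as you note. With that input your argument is correct and arguably more conceptual, but the paper's proof is shorter, avoids any appeal to existence or computation of partially lax colimits in $\bcat{C}\!\on{at}_\infty$, and uses only the fibrational definition of cofinality plus a Quillen adjunction.
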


The proof of our main theorem relies on the fact (see the last section in \cite{GHL_LaxLim}) that a map of marked $\infty$-bicategories $f:\bcat{C}^\dagger \to \bcat{D}^\dagger$ is cofinal if and only if it defines an  outer Cartesian equivalence over $\bcat{D}$. Our proof strategy consists in showing that $\bcat{C}^\dagger \to \bcat{D}$ is classified (after pertinent fibrant replacements) by the functor 
\[
  L_W\left(\bcat{C}^\dagger_{\bcat{D}\upslash}\right): \bcat{D}^\op \to \bcat{C}\!\on{at}_\infty, \enspace \enspace d \mapsto L_W\left(\bcat{C}^\dagger_{d \upslash}\right)
 \] 
 and similarly $\bcat{D}^\dagger \to \bcat{D}$ is classified by the functor 
\[
  L_W\left(\bcat{D}^\dagger_{\bcat{D}\upslash}\right): \bcat{D}^\op \to \bcat{C}\!\on{at}_\infty, \enspace \enspace d \mapsto L_W\left(\bcat{D}^\dagger_{d \upslash}\right)
 \] 
 so that we can identify the map $f:\bcat{C}^\dagger \to \bcat{D}^\dagger$ with its associated natural transformation  which has as components the canonical maps 
 \[
    L_W\left(\bcat{C}^\dagger_{d \upslash}\right) \to  L_W\left(\bcat{D}^\dagger_{d \upslash}\right)
 \]
This argument makes heavy use of the $\infty$-bicategorical Grothendieck construction studied by the authors in \cite{AGS_CartII}. Indeed, in order to gain access to the functor $ L_W\left(\bcat{C}^\dagger_{\bcat{D}\upslash}\right)$ we construct for every functor of $\infty$-bicategories $f: \bcat{C} \to \bcat{D}$ an outer 2-Cartesian fibration $\mathbb{F}(\bcat{C}) \to \bcat{D}$ which corresponds under our Grothendieck construction to the functor
\[
  \bcat{C}_{\bcat{D}\upslash}: \bcat{D}^\op \to \bcat{B}\!\on{icat}_\infty, \enspace d \mapsto \bcat{C}^\natural_{d\upslash}
\]
where the marking of $\bcat{C}^\natural$ consists in the equivalences. Our theorem, follows from a simple localization argument together with the next result which identifies $F(\bcat{C}) \to \bcat{D}$ as the free 2-Cartesian fibration on the functor $f$.

\begin{theorem}
  Let $f:\bcat{C} \to \bcat{D}$ be a functor of $\infty$-bicategories. Then there is a morphism over $\bcat{D}$ 
  \[
    \begin{tikzcd}
    \bcat{C} \arrow[dr,"f"] \arrow[rr] && \mathbb{F}(\bcat{C}) \arrow[ld] \\
    & \bcat{D} &
  \end{tikzcd}
  \]
  which is an equivalence in the model structure for outer 2-Cartesian fibrations defined in \cite[\S 3]{AGS_CartI}.
\end{theorem}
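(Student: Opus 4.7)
The plan is to construct $\mathbb{F}(\bcat{C})$ explicitly as a pullback involving the arrow $\infty$-bicategory of $\bcat{D}$, verify outer 2-Cartesian fibrancy of the projection using closure properties from \cite{AGS_CartI}, and then establish that the natural comparison map $\iota\colon \bcat{C} \to \mathbb{F}(\bcat{C})$ is a weak equivalence by arguing that $\mathbb{F}(\bcat{C}) \to \bcat{D}$ is the universal outer 2-Cartesian fibration receiving a map from $\bcat{C}$.

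Mirroring the $(\infty,1)$-free-fibration construction, I would set
\[
\mathbb{F}(\bcat{C}) := \bcat{C} \times_{\bcat{D}} \Fun([1],\bcat{D}),
\]
where the pullback is formed along $f$ and the target-evaluation functor $\on{ev}_1$, and the structural projection $p\colon \mathbb{F}(\bcat{C}) \to \bcat{D}$ is given by source-evaluation $\on{ev}_0$. An object of $\mathbb{F}(\bcat{C})$ is then a pair $(c, u\colon d \to f(c))$ lying over $d$, so that the fiber of $p$ over $d$ is precisely $\bcat{C}^\natural_{d\upslash}$ from \autoref{defn:upslice} (with the minimal marking on $\bcat{C}$). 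The comparison map $\iota$ is then $c \mapsto (c,\id_{f(c)})$; it covers $f$ and factors the original functor through $p$.

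For fibrancy of $p$, the key step is to show that $\on{ev}_0\colon \Fun([1],\bcat{D}) \to \bcat{D}$ is itself an outer 2-Cartesian fibration, which reduces to a direct verification of the relevant lifting properties against the generating anodyne morphisms of the outer 2-Cartesian model structure of \cite[\S3]{AGS_CartI}, the Cartesian morphisms being modelled by the obvious whiskering squares. Stability of outer 2-Cartesian fibrations under pullback, also from \emph{loc.~cit.}, then transfers fibrancy to $p$ and identifies both its fibers and its Cartesian edges with those predicted by the comma description.

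The main obstacle is to show that $\iota$ is a weak equivalence over $\bcat{D}$. My approach is to argue by universal property: for every outer 2-Cartesian fibration $q\colon E \to \bcat{D}$, the precomposition map
\[
  \Map_{/\bcat{D}}(\mathbb{F}(\bcat{C}), E) \longrightarrow \Map_{/\bcat{D}}(\bcat{C}, E)
\]
is an equivalence of mapping spaces, whence $\iota$ becomes a weak equivalence after (co)fibrant replacement. Given $g\colon \bcat{C} \to E$ over $\bcat{D}$, the extension $\tilde g\colon \mathbb{F}(\bcat{C}) \to E$ sends $(c, u\colon d \to f(c))$ to the source of the essentially unique $q$-Cartesian lift of $u$ at $g(c)$; functoriality on 1- and 2-morphisms is then dictated by the universal property of Cartesian lifts. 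The hardest technical step is coherently organizing this Cartesian-lifting procedure into an actual functor of $\infty$-bicategories—extending not just to 2-morphisms but to all higher coherences—and this is where I expect the detailed theory of outer 2-Cartesian fibrations and their straightening, developed in \cite{AGS_CartII}, to carry the bulk of the technical weight.
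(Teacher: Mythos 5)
Your overall strategy (build $\mathbb{F}(\bcat{C})$ as a pullback of an arrow object, check fibrancy, then verify the defining mapping-object criterion of \autoref{def:weakequiv1}) is reasonable in outline, but the proposal has a genuine gap at its very first step. You take the arrow object to be $\Fun([1],\bcat{D})$, i.e.\ the pseudo-functor $\infty$-bicategory, ``mirroring the $(\infty,1)$-construction,'' and you claim the fibre of the resulting projection over $d$ is the slice $\bcat{C}^{\natural}_{d\upslash}$ of \autoref{defn:upslice}. In the $(\infty,2)$-setting this is false: with the pseudo arrow object the $2$-cell filling the triangle in a morphism of the fibre is forced to be invertible, so you obtain the pseudo-slice rather than the \emph{lax} slice. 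The paper instead uses the marked-scaled extension $\Fun^{\widehat{\on{gr}}}(\Delta^1,\bcat{D})$ of the Gray arrow object, built from the Gray tensor product of \cite{GHL_Gray} and \cite{GHL_LaxLim}, precisely so that the fibres are the lax slices. This is not cosmetic: the fibrant replacement of $\bcat{C}^\natural\to\bcat{D}$ in the outer 2-Cartesian model structure has the lax slices as fibres, and lax and pseudo slices are not bicategorically equivalent in general, so with your $\mathbb{F}(\bcat{C})$ the comparison map cannot be an equivalence over $\bcat{D}$, nor would proving something about it establish the stated theorem. Relatedly, the statement lives in $(\mbsSet)_{/\bcat{D}}$, so the marking (Cartesian edges) and biscaling (lean triangles) on $\mathbb{F}(\bcat{C})$ are part of the data; your sketch never specifies them, whereas the definition of $\Delta^1\widehat{\otimes}(-)$ and the final step of the paper's proof (together with \autoref{cor:decormarked}) exist exactly to handle these decorations.

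Two further steps do not go through as stated. First, fibrancy of $\mathbb{F}(p)$ is not a pullback-stability statement: the pullback defining $\mathbb{F}(\bcat{C})$ is taken along $\on{ev}_1$, while the structure map to $\bcat{D}$ is induced by $\on{ev}_0$, so $\mathbb{F}(p)$ is not a base change of $\on{ev}_0\colon \Fun^{\on{gr}}(\Delta^1,\bcat{D})\to\bcat{D}$ (which is the fibration supplied by \cite{GHL_Cart}); the paper proves fibrancy directly in \autoref{prop:freefib2Cart} via the Gray pushout-product lemma \autoref{lem:FreeFibPP}, and if you wanted to route through $\bcat{C}\times\bcat{D}\to\bcat{D}$ as in \cite{GHN} you would need composition statements for outer 2-Cartesian fibrations that you neither state nor cite. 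Second, and more seriously, your verification that $\iota$ is a weak equivalence reduces to the assertion that every $g\colon\bcat{C}\to E$ over $\bcat{D}$ extends along $\iota$, essentially uniquely and coherently in all higher cells, by choosing $q$-Cartesian lifts; you explicitly defer this coherence problem to \cite{AGS_CartII}, but no result there performs this construction -- it is precisely the content of the theorem being proved (and note the criterion of \autoref{def:weakequiv1} requires a bicategorical equivalence of the mapping objects $\Map_{\bcat{D}}(-,E)$, not merely of mapping spaces). The paper avoids the issue with a hands-on combinatorial argument: \autoref{thm:thebigfreeboi} exhibits $\bcat{C}^\natural\to\FF(\bcat{C})^\natural$ as \bS-anodyne over $\bcat{D}$ by an explicit filtration built from the extension operators $E_j^{\ast}$, which is stronger than a weak equivalence and yields the extension property against all 2-Cartesian fibrations for free. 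To close the gap you would need either such a filtration argument or a genuine straightening argument in place of the appeal to ``essentially unique Cartesian lifts.''
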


\subsection{A class of examples}

We conclude this section with a somewhat general example that illustrates the applicability of our theorem.

Let $K$ be an ordinary category. We define a 2-category $\mathbb{2}[K]$ as follows:
\begin{itemize}
   \item We have a pair of objects $0,1$.
   \item The mapping categories are given by $\mathbb{2}[K](\epsilon,\epsilon)=\Delta^0$ for $\epsilon \in \set{0,1}$, $\mathbb{2}[K](0,1)=K$ and $\mathbb{2}[K](1,0)=\emptyset$.  
 \end{itemize}
Given a functor of 1-categories $p:K \to S$ we obtain a (strict) 2-functor $\mathbb{p}:\mathbb{2}[K] \to \mathbb{2}[S]$. We will study the morphisms
\[
  \begin{tikzcd}
    \mathbbm{p}_i: \mathbb{2}[K]_{i\upslash} \arrow[r] & \mathbb{2}[S]_{i \upslash}, \enspace \text{ for }i=0,1.
  \end{tikzcd}
\]
The case $i=1$ is trivial since the functor $\mathbbm{p}_1$ is an isomorphism. We turn our attention to the case $i=0$. Observe that since $\mathbbm{p}$ is an isomorphism on objects it follows that both  $\mathbb{2}[K]_{0\upslash}$ and $ \mathbb{2}[S]_{0\upslash}$ have the same objects which are precisely represented by objects $s \in S$ together with the identity morphism on the object $0$. Let $s_1,s_2 \in S$. Then $\mathbbm{p}_0$ induces an isomorphism 
\[
  \mathbb{2}[K]_{0\upslash}(s_1,s_2)\isom \mathbb{2}[S]_{0\upslash}(s_1,s_2)\isom \Hom_S(s_1,s_2)
\]
We similarly obtain
\[
   \mathbb{2}[K]_{0\upslash}(s,\on{id})\isom  \mathbb{2}[S]_{0\upslash}(s,\on{id})\isom \emptyset, \enspace \enspace \mathbb{2}[K]_{0\upslash}(\on{id},\on{id})\isom  \mathbb{2}[S]_{0\upslash}(\on{id},\on{id})\isom \Delta^0
\]
The final case to analyze then yields
\[
  \func{K_{/s}=\mathbb{2}[K]_{0\upslash}(\on{id},s) \to \mathbb{2}[S]_{0\upslash}(\on{id},s)=S_{/s}}
\]
where $K_{/s}$ is the dual version\footnote{These categories control the dual notion of coinitial functor: A functor is $f$ is coinitial if restriction along $f$ preserves limits } of $K_{s/}$ whose objects are given by morphisms $u:p(k) \to s$ in $S$ and whose morphisms are given by maps $k \to k'$ making the obvious diagram commute. We can now use our characterization of cofinal functors of $\infty$-bicategories to arrive at the following result.
\begin{prop*}
  Let $p:K \to S$ be a functor of ordinary 1-categories. Then $\mathbbm{p}:\mathbb{2}[K] \to \mathbb{2}[S]$ is a cofinal functor of 2-categories (with respect to the minimal marking) if and only if $p^\op:K^\op \to S^\op$ is a cofinal functor of 1-categories.
\end{prop*}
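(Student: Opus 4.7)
The approach is to invoke the main cofinality theorem (\autoref{thm:cofinality}) and concretely unpack the resulting condition in this setting. By that theorem, $\mathbbm{p}$ is cofinal for the minimal markings if and only if for every $d\in \mathbb{2}[S]$ the induced functor $L_W(\mathbb{2}[K]^\natural_{d\upslash})\to L_W(\mathbb{2}[S]^\natural_{d\upslash})$ is an equivalence of $\infty$-categorical localizations. The case $d=1$ is immediate from the excerpt's observation that $\mathbbm{p}_1$ is already an isomorphism, so only $d=0$ requires genuine work.

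For $d=0$ I would use the mapping-category-by-mapping-category analysis carried out in the excerpt: both marked 2-categories have the same set of objects, and $\mathbbm{p}_0$ is an isomorphism on every mapping category except $\mathbb{2}[K]_{0\upslash}(\on{id},s)\to \mathbb{2}[S]_{0\upslash}(\on{id},s)$, which is precisely the canonical comparison $K_{/s}\to S_{/s}$. Since the marking is minimal, $W$ consists of equivalences and is therefore already invertible in the underlying $\infty$-category; consequently $L_W$ applied to either side coincides with the underlying $\infty$-category, whose mapping spaces are the classifying spaces of the mapping categories. The condition of the main theorem thus reduces to the requirement that, for every $s\in S$, the map $|K_{/s}|\to |S_{/s}|$ be a weak homotopy equivalence.

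Because $S_{/s}$ has the terminal object $\on{id}_s$, the space $|S_{/s}|$ is contractible, and the criterion simplifies further to $|K_{/s}|\simeq *$ for every $s\in S$. I would then invoke Quillen's Theorem~A (equivalently, Joyal's cofinality criterion in the 1-categorical case) to recognize this as precisely the condition for $p^\op:K^\op\to S^\op$ to be cofinal, using that $K_{/s}^{\op}$ is the comma category $s\downarrow p^\op$ controlling cofinality of $p^\op$ and that classifying spaces are insensitive to taking opposites. This yields both implications of the proposition.

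The step I expect to require the most care is the identification of $L_W(\mathbb{2}[X]^\natural_{0\upslash})$ with the underlying $\infty$-category of $\mathbb{2}[X]_{0\upslash}$. One must argue that because $W$ already consists of 1-morphisms that are invertible in the underlying $(\infty,1)$-category, the universal property of the $\infty$-categorical localization is satisfied by the underlying $(\infty,1)$-category itself. This is the only non-formal ingredient beyond the main theorem; once it is in place, the remaining verifications are straightforward computations in the mapping categories enumerated in the excerpt.
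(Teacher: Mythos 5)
Your overall route is exactly the one the paper intends for this proposition: apply condition (2) of \autoref{thm:cofinality}, dispose of the slice at $1$ because $\mathbbm{p}_1$ is an isomorphism, observe that at $0$ the comparison of slices is an isomorphism on objects and on all mapping categories except $\mathbb{2}[K]_{0\upslash}(\on{id},s)\to\mathbb{2}[S]_{0\upslash}(\on{id},s)$, which is $K_{/s}\to S_{/s}$, and then reduce to the contractibility of $|K_{/s}|$ and the $1$-categorical cofinality criterion for $p^\op$ (Joyal's criterion, which is the ``if and only if'' form; Theorem~A alone only gives one direction, but you cite the right statement). All of that, and the identification $(K^\op)_{s/}\cong (K_{/s})^\op$, is correct.

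The genuine problem is the justification of the step you yourself flag as crucial. The localization $L_W(\mathbb{2}[K]^\natural_{0\upslash})$ is \emph{not} the underlying $\infty$-category of the slice, and the underlying $(\infty,1)$-category does not satisfy the universal property of $L_W$ even though $W$ consists of equivalences: maps out of $L_W$ take values in $\infty$-categories, so all $2$-morphisms, invertible or not, must become invertible as well. The underlying $\infty$-category has mapping spaces the maximal sub-$\infty$-groupoids of the mapping categories, and with that reading your condition at $d=0$ would become ``the map on cores of $K_{/s}\to S_{/s}$ is an equivalence'', which is wrong: for $p\colon \{0\}\hookrightarrow [1]$ the functor $p^\op$ is cofinal (inclusion of a terminal object of $[1]^\op$), $|K_{/1}|\simeq |S_{/1}|\simeq \ast$, but the core of $K_{/1}$ is a point while the core of $S_{/1}$ is two isolated points. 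What you actually use downstream --- mapping spaces given by the classifying spaces $|K_{/s}|$ and $|S_{/s}|$ --- is the correct description of $L_W$ here, but it needs the correct argument: since $W$ consists of equivalences, $L_W$ agrees with the left adjoint of the inclusion of $\infty$-categories into $\infty$-bicategories, which is computed by applying the groupoidification $|\cdot|$ to mapping categories (change of enrichment along a product-preserving left adjoint), or, staying inside the paper, by identifying the fibres of the fibrant replacement of \autoref{def:upcomafibrant} as in the proof of \autoref{prop:prebigtheorem}. With that justification substituted for the ``underlying $\infty$-category'' claim, your proof goes through as written and matches the paper's derivation.
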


Using this preposition we can provide natural examples of cofinal maps
\[\begin{tikzcd}
  \bullet & \bullet & {} & {} & \bullet && \bullet
  \arrow[""{name=0, anchor=center, inner sep=0}, "f", curve={height=-18pt}, from=1-5, to=1-7]
  \arrow[""{name=1, anchor=center, inner sep=0}, "h"', curve={height=18pt}, from=1-5, to=1-7]
  \arrow[""{name=2, anchor=center, inner sep=0}, "g"{description}, from=1-5, to=1-7]
  \arrow["f", from=1-1, to=1-2]
  \arrow[hook, from=1-3, to=1-4]
  \arrow[shorten <=2pt, shorten >=4pt, Rightarrow, from=0, to=2]
  \arrow[shorten <=4pt, shorten >=2pt, Rightarrow, from=2, to=1]
\end{tikzcd}\]
\[\begin{tikzcd}
  \bullet & \bullet & {} & {} & \bullet && \bullet
  \arrow[""{name=0, anchor=center, inner sep=0}, "f", curve={height=-18pt}, from=1-5, to=1-7]
  \arrow[""{name=1, anchor=center, inner sep=0}, "h"', curve={height=18pt}, from=1-5, to=1-7]
  \arrow[""{name=2, anchor=center, inner sep=0}, "g"{description}, from=1-5, to=1-7]
  \arrow["g", from=1-1, to=1-2]
  \arrow[hook, from=1-3, to=1-4]
  \arrow[shorten <=2pt, shorten >=4pt, Rightarrow, 2tail reversed, no head, from=0, to=2]
  \arrow[shorten <=4pt, shorten >=2pt, Rightarrow, from=2, to=1]
\end{tikzcd}\]

\subsection{Variances}

In the theory of partially lax (co)limits, as expounded in \cite{GHL_LaxLim}, there are in fact four versions to consider. In classical terminology, these correspond to lax and oplax limits and colimits. These notions are related by the 1- and 2-morphism duals of infinity bicategories, and as such, our cofinality criterion provides formally dual characterizations of when the restriction along a functor $\func{F:\mathbb{C}\to \mathbb{D}}$ of $\infty$-bicategories preserves partially (op)lax (co)limits. That is, all variances can be addressed by applying our criterion to an appropriate formal dual. 

Moreover, each of the four variances possible for (op)lax (co)limits corresponds to a variance of the $(\infty,2)$-categorical Grothendieck construction. In this paper we work with the Grothendieck construction we developed in \cite{AGS_CartII}, and, correspondingly with lax colimits. Since this Grothendieck construction also, via opposite simplicial sets, provides a second, this gives explicit constructions for two of the four variances immediately.

Our arguments, however, generalize to the other variances nearly verbatim, requiring only that where applicable one reverse the orders of Gray tensor products and swap Gray functor categories (those with lax transformations as their morphisms) for opGray functor categories (those with oplax transformations as their morphisms). The upcoming work \cite{AG_dualfib} of the first-named author will provide a model-categorical construction of the two remaining Grothendieck constructions, which can be used to make the dual argument more explicit.

In the special case when we take partially lax colimits over a strict 2-category with a marking, the criterion of the corollary above can be checked by working with the lax slices computed using strict 2-categorical methods, rather than the $\infty$-bicategorical version, as we show in \autoref{thm:comparisonofslices}. Of particular import, this means that one can check the corresponding notions of cofinality for the other variances by taking the appropriate dual of the \emph{2-categories} in question, and then applying the criterion of the corollary. Thus, in practice, whenever we index over a strict 2-category, the computation criterion of the corollary is very tractable regardless of variance. 

\section{Preliminaries}

Recapitulating all the necessary background in the theory of $(\infty,2)$-categories and their colimits would render this preliminary section longer than the rest of the paper. We have therefor decide to include only the relevant results from out previous works \cite{AGS_CartI} and \cite{AGS_CartII} that play a central role in our arguments, that is:
\begin{itemize}
   \item The theory of marked biscaled simplicial sets and the model structure for outer 2-Cartesian fibrations.
   \item The $\infty$-bicategorical Grothendieck construction. 
 \end{itemize} 

 We refer the reader to \cite{LurieGoodwillie} for an extensive study of scaled simplicial sets and the corresponding model structures. For the theory of partially lax (co)limits, we refer the reader to \cite{GHL_LaxLim}.

 \subsection{Marked biscaled simplicial sets and outer 2-Cartesian fibrations.}

\begin{definition}
  A \emph{marked biscaled} simplicial set ($\bS$ simplicial set) is given by the following data
  \begin{itemize}
    \item A simplicial set $X$.
    \item A collection of edges  $E_X \in X_1$ containing all degenerate edges.
    \item A collection of triangles $T_X \in X_2$ containing all degenerate triangles. We will refer to the elements of this collection as \emph{thin triangles}.
    \item A collection of triangles $C_X \in X_2$ such that $T_X \subseteq C_X$. We will refer to the elements of this collection as \emph{lean triangles}.
  \end{itemize}
  We will denote such objects as triples $(X,E_X, T_X \subseteq C_X)$. A map $(X,E_X, T_X \subseteq C_X) \to (Y,E_Y,T_Y \subseteq C_Y)$ is given by a map of simplicial sets $f:X \to Y$ compatible with the collections of edges and triangles above. We denote by $\mbsSet$ the category of $\bS$ simplicial sets. 
\end{definition}

\begin{notation}
  Let $(X,E_X, T_X \subseteq C_X)$ be a $\bS$ simplicial set. Suppose that the collection $E_X$ consist only of degenerate edges. Then we fix the notation $(X,E_X, T_X \subseteq C_X)=(X,\flat,T_X \subseteq E_X)$ and do similarly for the collection $T_X$. If $C_X$ consists only of degenerate triangles we fix the notation $(X,E_X, T_X \subseteq C_X)=(X,E_X, \flat)$. In an analogous fashion we wil use the symbol “$\sharp$“ to denote a collection containing all edges (resp. all triangles). Finally suppose that $T_X=C_X$ then we will employ the notation $(X,E_X,T_X)$.
\end{notation}

\begin{remark}
  We will often abuse notation when defining the collections $E_X$ (resp. $T_X$, resp. $C_X$) and just specified its non-degenerate edges (resp. triangles).
\end{remark}

\begin{definition}\label{def:mbsanodyne}
  The set of \emph{generating $\bS$-anodyne maps}, \(\bS\) is the set of maps of $\bS$ simplicial sets consisting of:
  \begin{enumerate}
    \myitem{(A1)}\label{mb:innerhorn} The inner horn inclusions 
    \[
    \bigl(\Lambda^n_i,\flat,\{\Delta^{\{i-1,i,i+1\}}\}\bigr)\rightarrow \bigl(\Delta^n,\flat,\{\Delta^{\{i-1,i,i+1\}}\}\bigr)
    \quad , \quad n \geq 2 \quad , \quad 0 < i < n ;
    \]
    \myitem{(A2)}\label{mb:wonky4} The map 
    \[
    (\Delta^4,\flat,T)\rightarrow (\Delta^4,\flat,T\cup \{\Delta^{\{0,3,4\}}, \ \Delta^{\{0,1,4\}}\}),
    \]
    where we define
    \[
    T\overset{\text{def}}{=}\{\Delta^{\{0,2,4\}}, \ \Delta^{\{ 1,2,3\}}, \ \Delta^{\{0,1,3\}}, \ \Delta^{\{1,3,4\}}, \ \Delta^{\{0,1,2\}}\};
    \]
    \myitem{(A3)}\label{mb:leftdeglefthorn} The set of maps
    \[
    \Bigl(\Lambda^n_0\coprod_{\Delta^{\{0,1\}}}\Delta^0,\flat,\flat \subset\{\Delta^{\{0,1,n\}}\}\Bigr)\rightarrow \Bigl(\Delta^n\coprod_{\Delta^{\{0,1\}}}\Delta^0,\flat,\flat \subset\{\Delta^{\{0,1,n\}}\}\Bigr)
    \quad , \quad n\geq 2.
    \]
    These maps force left-degenerate lean-scaled triangles to represent coCartesian edges of the mapping category.
    \myitem{(A4)}\label{mb:2Cartesianmorphs} The set of maps
    \[
    \Bigl(\Lambda^n_n,\{\Delta^{\{n-1,n\}}\},\flat \subset \{ \Delta^{\{0,n-1,n\}} \}\Bigr) \to \Bigl(\Delta^n,\{\Delta^{\{n-1,n\}}\},\flat \subset \{ \Delta^{\{0,n-1,n\}} \}\Bigr) \quad , \quad n \geq 2.
    \]
    This forces the marked morphisms to be $p$-Cartesian with respect to the given thin and lean triangles. 
    \myitem{(A5)}\label{mb:2CartliftsExist} The inclusion of the terminal vertex
    \[
    \Bigl(\Delta^{0},\sharp,\sharp \Bigr) \rightarrow \Bigl(\Delta^1,\sharp,\sharp \Bigr).
    \]
    This requires $p$-Cartesian lifts of morphisms in the base to exist.
    \myitem{(S1)}\label{mb:composeacrossthin} The map
    \[
    \Bigl(\Delta^2,\{\Delta^{\{0,1\}}, \Delta^{\{1,2\}}\},\sharp \Bigr) \rightarrow \Bigl(\Delta^2,\sharp,\sharp \Bigr),
    \]
    requiring that $p$-Cartesian morphisms compose across thin triangles.
    \myitem{(S2)}\label{mb:coCartoverThin} The map
    \[
    \Bigl(\Delta^2,\flat,\flat \subset \sharp \Bigr) \rightarrow \Bigl( \Delta^2,\flat,\sharp\Bigr),
    \]
    which requires that lean triangles over thin triangles are, themselves, thin.
    \myitem{(S3)}\label{mb:innersaturation} The map
    \[
    \Bigl(\Delta^3,\flat,\{\Delta^{\{i-1,i,i+1\}}\}\subset U_i\Bigr) \rightarrow \Bigl(\Delta^3,\flat, \{\Delta^{\{i-1,i,i+1\}}\}\subset \sharp \Bigr) \quad, \quad 0<i<3
    \]
    where $U_i$ is the collection of all triangles except $i$-th face. This and the next two generators serve to establish composability and limited 2-out-of-3 properties for lean triangles.
    \myitem{(S4)}\label{mb:dualcocart2of3} The map
    \[
    \Bigl(\Delta^3 \coprod_{\Delta^{\{0,1\}}}\Delta^0,\flat,\flat \subset U_0\Bigr) \rightarrow \Bigl(\Delta^3 \coprod_{\Delta^{\{0,1\}}}\Delta^0,\flat, \flat \subset \sharp \Bigr) 
    \]
    where $U_0$ is the collection of all triangles except the $0$-th face.
    \myitem{(S5)}\label{mb:coCart2of3} The map
    \[
    \Bigl(\Delta^3,\{\Delta^{\{2,3\}}\},\flat \subset U_3\Bigr) \rightarrow \Bigl(\Delta^3,\{\Delta^{\{2,3\}}\}, \flat \subset \sharp \Bigr) 
    \]
    where $U_3$ is the collections of all triangles except the $3$-rd face.
    \myitem{(E)}\label{mb:equivalences} For every Kan complex $K$, the map
    \[
    \Bigl( K,\flat,\sharp  \Bigr) \rightarrow \Bigl(K,\sharp, \sharp\Bigr).
    \]
    Which requires that every equivalence is a marked morphism.
  \end{enumerate}
  A map of $\bS$ simplicial sets is said to be \bS-anodyne if it belongs to the weakly saturated closure of \bS.
\end{definition}

\begin{definition}
  Let $f:(X,E_X,T_X \subseteq C_X) \to (Y,E_Y,T_Y \subseteq C_Y)$ be a map of $\bS$ simplicial sets. We say that $f$ is a \bS-fibration if it has the right lifting property against the class of \bS-anodyne morphisms.
\end{definition}

\begin{definition}\label{def:mappingbicats}
  Given two $\bS$ simplicial sets $(K,E_K,T_K \subseteq C_K), (X,E_X,T_X \subseteq C_X)$ we define  another $\bS$ simplicial set denoted by $\on{Fun}^{\mathbf{mb}}(K,X)$ and characterized by the following universal property
  \[
  \on{Hom}_{\mbsSet}\Bigr(A,\on{Fun}^{\mathbf{mb}}(K,X) \Bigl)\isom \Hom_{\mbsSet}\Bigr(A \times K,X  \Bigl).
  \]
\end{definition}

\begin{proposition}\label{prop:bsfibfun}
  Let $f:(X,E_X,T_X \subseteq C_X) \to (Y,E_Y,T_Y \subseteq C_Y)$ be a \bS-fibration. Then for every $K \in \mbsSet$ the induced morphism $\on{Fun}^{\mathbf{mb}}(K,X) \to \on{Fun}^{\mathbf{mb}}(K,Y)$ is a \bS-fibration.
\end{proposition}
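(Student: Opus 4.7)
The plan is a standard adjunction argument. By the universal property of \autoref{def:mappingbicats}, every commutative square
\[
\begin{tikzcd}
A \arrow[r] \arrow[d, "i"'] & \on{Fun}^{\mathbf{mb}}(K,X) \arrow[d] \\
B \arrow[r] & \on{Fun}^{\mathbf{mb}}(K,Y)
\end{tikzcd}
\]
is in natural bijection with a square from $i \times \id_K \colon A \times K \to B \times K$ into $f\colon X \to Y$, and the bijection carries diagonal lifts to diagonal lifts. Hence it is enough to prove that for every $\bS$-anodyne map $i$ and every $K \in \mbsSet$, the product $i \times \id_K$ is again $\bS$-anodyne.

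Fix $K \in \mbsSet$. The class of maps $i$ such that $i \times \id_K$ is $\bS$-anodyne is weakly saturated: the functor $- \times K$ is a left adjoint (right adjoint to $\on{Fun}^{\mathbf{mb}}(K,-)$), so it preserves pushouts and transfinite compositions, retracts are preserved trivially, and the class of $\bS$-anodyne maps is weakly saturated by construction. It therefore suffices to verify the stability when $i$ ranges over the generators listed in \autoref{def:mbsanodyne}. Fixing such a generator $i$, one may perform a second reduction in the $K$-variable: the class of $K \in \mbsSet$ for which $i \times \id_K$ is $\bS$-anodyne is likewise closed under colimits, so we may further assume that $K$ is an elementary decorated simplex.

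What remains is a finite combinatorial check: for each pair consisting of a generator $i$ and an elementary decorated simplex $K$, one must exhibit a filtration of $A \times K \hookrightarrow B \times K$ whose successive stages are pushouts of the basic $\bS$-anodyne maps, while ensuring that the markings and the lean/thin scalings inherited on the product are compatible with the scaling data appearing in the generators. The principal obstacle lies in controlling the scalings of the product simplices within the associated prism decompositions; this is particularly delicate for generator \ref{mb:wonky4}, which involves a non-standard scaled 4-simplex, and for the saturation generators \ref{mb:composeacrossthin}--\ref{mb:coCart2of3}, where the thin/lean dichotomy must be tracked carefully. These verifications have already been carried out in detail in \cite{AGS_CartI}, and the proposition follows by assembling them.
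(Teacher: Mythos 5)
Your first two reductions are fine: the adjunction of \autoref{def:mappingbicats} transposes lifting problems correctly, and the class of maps $i$ such that $i \times \id_K$ is \bS-anodyne is weakly saturated because $-\times K$ preserves pushouts, transfinite composites and retracts. The gap is in your second reduction, in the $K$-variable. The class of objects $K$ for which $i \times \id_K$ is \bS-anodyne is \emph{not} "closed under colimits" in any sense you can use: writing $K$ as a colimit of decorated simplices (or filtering it by cell attachments) expresses the arrow $i\times \id_K$ as a colimit of a \emph{diagram of arrows}, and a colimit of anodyne arrows is not anodyne in general --- weak saturation only gives cobase change along a single anodyne leg, transfinite composition and retracts. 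Concretely, if $K$ is obtained from $K'$ by attaching a simplex along $\partial\Delta^n \subseteq \Delta^n$ (or by adding a marking or a scaling), then in the induction the map you must control is not $i\times\id_{\Delta^n}$ but the pushout-product
\[
\bigl(A\times \Delta^n\bigr)\coprod_{A\times \partial\Delta^n}\bigl(B\times \partial\Delta^n\bigr)\longrightarrow B\times \Delta^n
\]
(and its decorated analogues), and knowing that $i\times\id_{\Delta^n}$ is anodyne for every elementary decorated simplex does not yield this. So even if your "finite combinatorial check" were carried out for all pairs (generator, elementary simplex), it would not assemble to the statement for general $K$ along the route you describe.

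The standard repair, which is also how this is actually established in \cite{AGS_CartI} (the present paper only recalls the proposition from there, without proof), is to prove the Leibniz/pushout-product property: for every generating \bS-anodyne map $i$ and every \emph{generating cofibration} $j$ (boundary inclusions together with the inclusions that add a marked edge, a thin triangle, or a lean triangle), the pushout-product of $i$ and $j$ is \bS-anodyne. For fixed $i$ the class of cofibrations $j$ with this property is weakly saturated, since the pushout-product construction in the second variable preserves cobase change, transfinite composition and retracts; applying it to the cofibration $\emptyset \to K$ gives exactly that $i\times\id_K$ is \bS-anodyne, and the adjunction argument then finishes the proof. Your closing citation of \cite{AGS_CartI} for checks indexed by pairs (generator, elementary simplex) therefore both mis-describes what is proved there and, since the proposition itself is the result being recalled from that reference, leaves the combinatorial heart of the argument unaccounted for.
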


\begin{definition}
  Let $f: Y \to S$ be a morphism of $\bS$ simplicial another map $g:X \to Y$. We define a $\bS$ simplicial set $\on{Map}_Y(K,X)$ by means of the pullback square
  \[
  \begin{tikzcd}[ampersand replacement=\&]
    \on{Map}_Y(K,X) \arrow[r] \arrow[d] \& \on{Fun}^{\mathbf{mb}}(K,X) \arrow[d] \\
    \Delta^0 \arrow[r,"g"] \& \on{Fun}^{\mathbf{mb}}(K,Y)
  \end{tikzcd}
  \]
  If $f:X \to Y$ is a \bS-fibration then it follows from the previous proposition that $\on{Map}_Y(K,X)$ is an $\infty$-bicategory.
\end{definition}

Let $S \in \on{Set}^{\mathbf{sc}}_{\Delta}$ for the rest of the section we will denote $(\mbsSet)_{/S}$ the category of $\bS$ simplicial sets over $(S,\sharp,T_S \subset \sharp)$.

\begin{definition}\label{def:fibrantobjects}
  We say that an object $\pi:X \to S$ in $(\mbsSet)_{/S}$ is an \emph{outer} 2-\emph{Cartesian} fibration if it is a $\bS$-fibration.
\end{definition}

\begin{remark}
  We will frequently abuse notation and refer to outer 2-Cartesian as \emph{2-Cartesian fibrations}.
\end{remark}

\begin{definition}\label{def:underlyingmapping}
  Let $\pi:X \to S$ be a morphism of $\bS$ simplicial sets. Given an object $K\to S$, we define $\on{Map}^{\on{th}}_{S}(K,X)$ to be the  $\bS$ simplicial subset consisting only of the thin triangles. Note that if $\pi$ is a 2-Cartesian fibration this is precisely the underlying $\infty$-category of $\on{Map}_S(K,X)$. 
  
  We similarly denote by $\on{Map}^{\isom}_S(K,X)$ the mb sub-simplicial set consisting of thin triangles and marked edges. As before, we note that if $\pi$ is a 2-Cartesian fibration, the simplicial set $\on{Map}^{\isom}_S(K,X)$ can be identified with the maximal Kan complex in $\on{Map}_S(K,X)$.
\end{definition}

\begin{definition}\label{def:weakequiv1}
  Let $\func{L \to[h] K \to[p] S}$ be a morphism in $(\mbsSet)_{/S}$. We say that $h$ is a cofibration when it is a monomorphism of simplicial sets. We will call $h$ a weak equivalence if for every 2-Cartesian fibration $\pi:X \to S$ the induced morphism
  \[
  \func{h^{*}:\on{Map}_S(K,X) \to \on{Map}_S(L,X)}
  \]
  is a bicategorical equivalence.
\end{definition}

We can now recall the main results of \cite{AGS_CartI}.

\begin{theorem}[\cite{AGS_CartI} Theorem 3.38]\label{thm:MBModelStructure}
  Let $S$ be a scaled simplicial set. Then there exists a left proper combinatorial simplicial model structure on $(\mbsSet )_{/S}$, which is characterized uniquely by the following properties:
  \begin{itemize}
    \item[C)] A morphism $f:X \to Y$ in $(\mbsSet )_{/S}$ is a cofibration if and only if $f$ induces a monomorphism betwee the underlying simplicial sets.
    \item[F)] An object $X \in (\mbsSet )_{/S}$ is fibrant if and only if $X$ is a 2-Cartesian fibration. 
  \end{itemize}
\end{theorem}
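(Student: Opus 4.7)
The plan is to explicitly construct $\mathbb{F}(\bcat{C})$ as a comma-style $\bS$-simplicial set built from a Gray construction, verify directly that the natural projection $\mathbb{F}(\bcat{C})\to\bcat{D}$ is an outer 2-Cartesian fibration, and then establish that the canonical inclusion $\bcat{C}\hookrightarrow\mathbb{F}(\bcat{C})$ is a weak equivalence in the model structure of \autoref{thm:MBModelStructure}. Concretely, I would set
\[
\mathbb{F}(\bcat{C}) \;:=\; \bcat{C}^{\natural} \times_{\bcat{D}^{\natural}} \bigl((\Delta^1)^{\sharp}\otimes\bcat{D}^{\natural}\bigr),
\]
where $\otimes$ denotes the $\bS$-Gray tensor product, the pullback is formed along $f\colon\bcat{C}\to\bcat{D}$ on one side and evaluation at the terminal vertex of the $\Delta^1$-factor on the other. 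Informally, objects are triples $(c,d,u\colon d\to f(c))$; the projection $\mathbb{F}(\bcat{C})\to\bcat{D}$ is evaluation at the initial vertex of $\Delta^1$, and the inclusion $\bcat{C}\hookrightarrow\mathbb{F}(\bcat{C})$ sends $c$ to $(c,f(c),\id_{f(c)})$ using the degenerate edge at $f(c)$. Marked edges and lean triangles of $\mathbb{F}(\bcat{C})$ are inherited from the two factors of the pullback, so that marked morphisms are those fixing the $\bcat{C}$-coordinate and moving along an arbitrary edge in $\bcat{D}$.

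Verifying that $\mathbb{F}(\bcat{C})\to\bcat{D}$ is \bS-fibrant amounts to checking the right lifting property against each generating class \ref{mb:innerhorn}--\ref{mb:equivalences}. Inner-horn filling \ref{mb:innerhorn} and the scaling axioms \ref{mb:composeacrossthin}--\ref{mb:innersaturation} reduce to the corresponding properties of $\bcat{C}^{\natural}$ and $\bcat{D}^{\natural}$ via stability of $\bS$-anodyne maps under Gray tensor and pullback. The existence of Cartesian lifts \ref{mb:2CartliftsExist} is immediate from the construction: for any $\alpha\colon d'\to d$ in $\bcat{D}$ and any $(c,u\colon d\to f(c))$ the edge
\[
(c,\,u\circ\alpha)\longrightarrow(c,\,u)
\]
lies in $\mathbb{F}(\bcat{C})$ and is marked because it is the image of $(c,\alpha)\in\bcat{C}\times((\Delta^1)^{\sharp}\otimes\bcat{D})$ and the $\Delta^1$ factor is fully marked. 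The 2-Cartesian property \ref{mb:2Cartesianmorphs} and the compatibility with thin/lean scalings then follow from the universal property of the Gray factor.

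For the weak equivalence, I would invoke \autoref{def:weakequiv1}: for every outer 2-Cartesian fibration $\pi\colon X\to\bcat{D}$, we must show that the restriction map
\[
\on{Map}_{\bcat{D}}\bigl(\mathbb{F}(\bcat{C}),X\bigr)\longrightarrow\on{Map}_{\bcat{D}}\bigl(\bcat{C},X\bigr)
\]
is a bicategorical equivalence. By the universal property of the Gray tensor product, a map $\mathbb{F}(\bcat{C})\to X$ over $\bcat{D}$ corresponds to a map $g\colon\bcat{C}\to X$ lying over $f$ together with coherent $\pi$-Cartesian transport data extending $g$ along every morphism of $\bcat{D}$. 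The existence and essential uniqueness of Cartesian lifts for $\pi$ --- encoded precisely by axioms \ref{mb:2Cartesianmorphs} and \ref{mb:2CartliftsExist} --- imply that the space of transport data extending a fixed $g$ is contractible, so the restriction map is an equivalence.

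The principal technical hurdle lies in making the coherent-transport argument precise with all bicategorical markings and scalings in place. The cleanest route is to exhibit $\bcat{C}\hookrightarrow\mathbb{F}(\bcat{C})$ directly as a transfinite composite of pushouts of generators from class \ref{mb:2CartliftsExist}, together with saturation classes \ref{mb:composeacrossthin}--\ref{mb:innersaturation}, attached along an induction over the skeletal filtration of $\bcat{D}$. This cellular approach reduces the coherence problem to verifying elementary pushouts of anodyne generators --- each verification is mechanical, but the bookkeeping across all simplicial dimensions of $\bcat{D}$, tracking interactions between the $(\Delta^1)^{\sharp}$-markings (which encode Cartesian transport) and the bicategorical scalings of $\bcat{D}^{\natural}$ (which encode 2-cell coherence), constitutes the main combinatorial burden.
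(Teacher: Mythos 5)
Your proposal does not address the statement at hand. The theorem to be proved asserts the \emph{existence} of a left proper combinatorial simplicial model structure on $(\mbsSet)_{/S}$, uniquely characterized by its cofibrations (monomorphisms of underlying simplicial sets) and its fibrant objects (outer 2-Cartesian fibrations). What you have written is instead an outline of a proof of a \emph{different} theorem in this paper --- the free-fibration theorem (\autoref{prop:freefib2Cart} together with \autoref{thm:thebigfreeboi}), which constructs $\mathbb{F}(\bcat{C})\to\bcat{D}$ and shows that $\bcat{C}\to\mathbb{F}(\bcat{C})$ is a weak equivalence over $\bcat{D}$. Nothing in your argument produces a model structure: you never identify generating cofibrations or generating trivial cofibrations, never verify the factorization and lifting axioms (e.g.\ via Jeff Smith's recognition theorem for combinatorial model categories), never prove left properness, never verify the simplicial enrichment axiom, and never establish the uniqueness claim. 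These are the actual content of the theorem, and they are entirely absent.

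Moreover, the argument you do give is circular with respect to the statement you were asked to prove: your final step invokes \autoref{def:weakequiv1} and speaks of ``a weak equivalence in the model structure of \autoref{thm:MBModelStructure},'' i.e.\ it presupposes that the model structure already exists. The notion of weak equivalence in \autoref{def:weakequiv1} is defined by testing against all 2-Cartesian fibrations, and one can state it without a model structure, but turning that class of maps into the weak equivalences of an actual combinatorial model category --- with the prescribed cofibrations and fibrant objects --- is precisely the nontrivial content of \cite[Theorem 3.38]{AGS_CartI}, which this paper imports rather than reproves. A correct proof would proceed along the lines of that reference: take the generating $\bS$-anodyne maps of \autoref{def:mbsanodyne} as the seed for the trivial cofibrations, show the localization machinery (e.g.\ enriched left Bousfield localization or the Cisinski--Olschok method) applies to the category of marked biscaled simplicial sets over $S$, and then identify the fibrant objects with the $\bS$-fibrations over $S$.
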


\subsection{The \texorpdfstring{$\infty$-}-bicategorical Grothendieck construction.}
 A special case of the model structure of \autoref{thm:MBModelStructure} of particular interest occurs when $S=\Delta^0$ is the terminal scaled simplicial set. Then, by \cite[Thm 3.39]{AGS_CartI}, the resulting model structure on $\mbsSet$ is Quillen equivalent to the model structure for $\infty$-bicategories on $\Set_\Delta^{\mathbf{sc}}$. In this case, the data of the two scalings becomes highly redundant --- for any fibrant object the two scalings coincide, and heuristically they no longer encode different information. 

We can avoid this redundancy by defining a further model structure which includes both markings and scalings, but avoids the redundancies created by a biscaling. The aim of this section is to define this model structure, and relate it to the \textbf{MB} model structure. 

\begin{definition}
  A \emph{marked-scaled simplicial set} consists of 
  \begin{itemize}
    \item A simplicial set $X$.
    \item A collection of edges $E_X\subseteq X_1$ containing all degenerate edges. We call the elements of $E_X$ \emph{marked edges}.
    \item A collection of triangles $T_X\subseteq X_2$ containing all degenerate triangles. We call the elements of $T_X$ \emph{thin triangles}.
  \end{itemize}
 \end{definition} 

 \begin{definition}
  The set of \emph{generating $\mathbf{MS}$-anodyne maps} $\mathbf{MS}$ is the set of maps of marked-scaled simplicial sets consisting of:
  \begin{itemize}
    \myitem{(MS1)}\label{MS:inner} The inner horn inclusions 
    \[
    \bigl(\Lambda^n_i,\flat,\{\Delta^{\{i-1,i,i+1\}}\}\bigr)\rightarrow \bigl(\Delta^n,\flat,\{\Delta^{\{i-1,i,i+1\}}\}\bigr)
    \quad , \quad n \geq 2 \quad , \quad 0 < i < n ;
    \]
    \myitem{(MS2)}\label{MS:wonky4} The map 
    \[
    \func{
      (\Delta^4,\flat,T) \to (\Delta^4,\flat, T\cup \{\Delta^{\{0,3,4\}},\Delta^{\{0,1,4\}}\})
    }
    \]
    where $T$ is defined as in \autoref{def:mbsanodyne}, \ref{mb:wonky4}.
    \myitem{(MS3)}\label{MS:0horn} The set of maps 
    \[
    \Bigl(\Lambda^n_0\coprod_{\Delta^{\{0,1\}}}\Delta^0,\flat,\{\Delta^{\{0,1,n\}}\}\Bigr)\rightarrow \Bigl(\Delta^n\coprod_{\Delta^{\{0,1\}}}\Delta^0,\flat,\{\Delta^{\{0,1,n\}}\}\Bigr)
    \quad , \quad n\geq 2.
    \]
    \myitem{(MS4)}\label{MS:nhorn} The set of maps 
    \[
    \Bigl(\Lambda^n_n,\{\Delta^{\{n-1,n\}}\}, \{ \Delta^{\{0,n-1,n\}} \}\Bigr) \to \Bigl(\Delta^n,\{\Delta^{\{n-1,n\}}\}, \{ \Delta^{\{0,n-1,n\}} \}\Bigr) \quad , \quad n \geq 2.
    \]
    \myitem{(MS5)}\label{MS:Cartlifts} The inclusion of the terminal vertex 
    \[
    \func{
      \left(\Delta^0,\sharp,\sharp\right)\to \left(\Delta^1,\sharp,\sharp\right)
    }
    \] 
    \myitem{(MS6)}\label{MS:Compose} The map 
    \[
    \Bigl(\Delta^2,\{\Delta^{\{0,1\}}, \Delta^{\{1,2\}}\},\sharp \Bigr) \rightarrow \Bigl(\Delta^2,\sharp,\sharp \Bigr),
    \]
    \myitem{(MS7)}\label{MS:composedeg4} The map
    \[
    \Bigl(\Delta^3 \coprod_{\Delta^{\{0,1\}}}\Delta^0,\flat, U_0\Bigr) \rightarrow \Bigl(\Delta^3 \coprod_{\Delta^{\{0,1\}}}\Delta^0,\flat, \sharp \Bigr) 
    \]
    where $U_0$ is the collection of all triangles except the $0$-th face.
    \myitem{(MS8)}\label{MS:composemarked5} The map
    \[
    \Bigl(\Delta^3,\{\Delta^{\{2,3\}}\}, U_3\Bigr) \rightarrow \Bigl(\Delta^3,\{\Delta^{\{2,3\}}\}, \sharp \Bigr) 
    \]
    where $U_3$ is the collections of all triangles except the $3$-rd face.
    \myitem{(MSE)}\label{MS:kan} For every Kan complex $K$, the map
    \[
    \Bigl( K,\flat,\sharp  \Bigr) \rightarrow \Bigl(K,\sharp, \sharp\Bigr).
    \]
  \end{itemize}
  We will call a morphism in $\mssSet$ \emph{$\mathbf{MS}$-anodyne} if it lies in the saturated hull of $\mathbf{MS}$. 
\end{definition}

\begin{lemma}\label{lem:Ui_MS-anodyne}
  The morphism
  \[
  \Bigl(\Delta^3,\flat,\{\Delta^{\{i-1,i,i+1\}}\}\subset U_i\Bigr) \rightarrow \Bigl(\Delta^3,\flat, \{\Delta^{\{i-1,i,i+1\}}\}\subset \sharp \Bigr) \quad, \quad 0<i<3,
  \]
  where $U_i$ is the collection of all triangles except $i$-th face, is $\mathbf{MS}$-anodyne.
\end{lemma}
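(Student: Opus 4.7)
The plan is to exhibit the map in the statement as a pushout of the wonky-4 generator \ref{MS:wonky4} along an appropriate degeneracy, thereby obtaining it as an $\mathbf{MS}$-anodyne morphism. In $\mathbf{MS}$-notation the source and target are simply $(\Delta^3,\flat,U_i)$ and $(\Delta^3,\flat,\sharp)$ respectively, since $\Delta^{\{i-1,i,i+1\}}$ already lies in $U_i$ whenever $i \in \{1,2\}$.

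Concretely, for each such $i$ I would work with the degeneracy $s_i \colon \Delta^4 \to \Delta^3$ sending $j \mapsto j$ for $j \leq i$ and $j \mapsto j-1$ for $j > i$. The first and essentially only substantive step is to check that $s_i$ underlies a morphism of $\mathbf{MS}$-simplicial sets $(\Delta^4, \flat, T) \to (\Delta^3, \flat, U_i)$, where $T = \{\Delta^{\{0,2,4\}}, \Delta^{\{1,2,3\}}, \Delta^{\{0,1,3\}}, \Delta^{\{1,3,4\}}, \Delta^{\{0,1,2\}}\}$ is the scaling in the source of \ref{MS:wonky4}. This is a direct combinatorial verification: for $i=1$, the triangles $\Delta^{\{0,2,4\}}, \Delta^{\{0,1,3\}}, \Delta^{\{1,3,4\}}$ of $T$ are sent to $\Delta^{\{0,1,3\}}, \Delta^{\{0,1,2\}}, \Delta^{\{1,2,3\}} \in U_1$, while $\Delta^{\{1,2,3\}}$ and $\Delta^{\{0,1,2\}}$ collapse to degenerate edges in $\Delta^3$; the case $i=2$ is entirely parallel.

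With $s_i$ in hand I would form the pushout square
\[
\begin{tikzcd}[ampersand replacement=\&]
(\Delta^4, \flat, T) \arrow[d, "s_i"'] \arrow[r] \& (\Delta^4, \flat, T \cup \{\Delta^{\{0,3,4\}}, \Delta^{\{0,1,4\}}\}) \arrow[d] \\
(\Delta^3, \flat, U_i) \arrow[r] \& P
\end{tikzcd}
\]
and invoke closure of $\mathbf{MS}$-anodyne maps under pushout. It remains to identify $P$ with $(\Delta^3, \flat, \sharp)$: since $s_i$ is surjective on the underlying simplicial sets, $P$ is carried by $\Delta^3$ with thin scaling $U_i \cup \{s_i(\Delta^{\{0,3,4\}}), s_i(\Delta^{\{0,1,4\}})\}$. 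For $i=1$ one has $s_1(\Delta^{\{0,3,4\}}) = \Delta^{\{0,2,3\}}$, precisely the $d_1$-face missing from $U_1$, while $s_1(\Delta^{\{0,1,4\}}) = \Delta^{\{0,1,3\}}$ is already present. For $i=2$ the situation is symmetric: now $s_2(\Delta^{\{0,1,4\}}) = \Delta^{\{0,1,3\}}$ plays the role of the missing $d_2$-face, and $s_2(\Delta^{\{0,3,4\}}) = \Delta^{\{0,2,3\}}$ was already in $U_2$.

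I do not foresee any real difficulty: once the correct degeneracy is chosen in each case, the content of the lemma sits packaged inside \ref{MS:wonky4}, and the remaining work is merely face-tracking.
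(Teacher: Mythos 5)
Your proposal is correct and coincides with the argument the paper delegates to its citation: the paper's entire proof of this lemma is a reference to \cite[Rmk 3.1.4]{LurieGoodwillie}, where precisely this pushout of the wonky-four generator \ref{MS:wonky4} along the degeneracy $s_i\colon\Delta^4\to\Delta^3$ is performed in the scaled setting, and since all markings involved are flat the scaled argument carries over verbatim to $\mathbf{MS}$. Your face-tracking for $i=1,2$ checks out (the only cosmetic asymmetry being that $s_2$ degenerates one triangle of $T$ rather than two), so this is essentially the paper's approach, written out in full.
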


\begin{proof}
  See \cite[Rmk 3.1.4]{LurieGoodwillie}.
\end{proof} 

In a similar way, as done for the $\bS$ case we can prove the following theorem:

\begin{theorem}[\cite{AGS_CartII} Theorem 2.45]\label{thm:markedscaledmodel}
  There is a left-proper combinatorial simplicial model category structure on $\mssSet$ uniquely characterized by the following properties:
  \begin{itemize}
    \item[C)] A morphism $f:X\to Y$ in $\mssSet$ is a cofibration if and only if it is a monomorphism on underlying simplicial sets.
    \item[F)] An object $X\in \mssSet$ is fibrant if and only if the unique map $X\to \Delta^0$ has the right lifting property with respect to the morphisms in $\mathbf{MS}$.  
  \end{itemize}
\end{theorem}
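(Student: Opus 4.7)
The plan is to mirror the proof of \autoref{thm:MBModelStructure} from AGS\_CartI, invoking a standard recognition theorem for combinatorial model structures on locally presentable categories (e.g., Proposition A.2.6.13 of Lurie's HTT). The category $\mssSet$ is a presheaf category on a small extension of $\Delta$ and therefore locally presentable, and the class of monomorphisms is generated by the usual boundary inclusions equipped with the minimal marking and scaling. This handles condition (C) automatically, so the content of the theorem is really the existence and uniqueness of a class of weak equivalences compatible with the stated class of fibrant objects.

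First, I would define the class of weak equivalences: a morphism $f\colon X \to Y$ in $\mssSet$ is a weak equivalence if for every object $Z$ satisfying (F), the induced map $\on{Fun}^{\mathbf{ms}}(Y,Z)\to \on{Fun}^{\mathbf{ms}}(X,Z)$ is a bicategorical equivalence on underlying scaled simplicial sets, where $\on{Fun}^{\mathbf{ms}}$ is the mapping object defined by the expected universal property analogous to \autoref{def:mappingbicats}. Next, I would verify the hypotheses of the recognition theorem: the class of weak equivalences is accessible, closed under 2-out-of-3, and closed under filtered colimits (formal, from local presentability); every $\mathbf{MS}$-anodyne map is a weak equivalence (by design, since any fibrant target admits the necessary lifts); and the pushout-product of a cofibration with an $\mathbf{MS}$-anodyne map is $\mathbf{MS}$-anodyne. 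For the simplicial enrichment, I would prove the analogue of \autoref{prop:bsfibfun} showing that $\on{Fun}^{\mathbf{ms}}(K,-)$ preserves fibrations, which together with the pushout-product axiom gives the simplicial model structure. Uniqueness given (C) and (F) follows from the general fact that a combinatorial model structure is determined by its cofibrations together with its fibrant objects.

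The main obstacle will be the combinatorial verification of the pushout-product axiom against the generators \ref{MS:inner}--\ref{MS:kan}, as well as the verification that each generator is indeed a weak equivalence. Rather than rerunning these checks from scratch, I would exploit the tight relationship between $\mssSet$ and $\mbsSet$: there is an evident adjunction given by sending a marked-scaled set $(X,E_X,T_X)$ to the biscaled object $(X,E_X, T_X \subseteq T_X)$ with the lean scaling collapsed onto the thin scaling, and in the other direction by forgetting the distinction between the two. Under this adjunction, the generators (MS1)--(MS8) correspond image-by-image to the generators \ref{mb:innerhorn}--\ref{mb:coCart2of3} of \autoref{def:mbsanodyne} in the case $C_X = T_X$, and \autoref{lem:Ui_MS-anodyne} takes care of the remaining saturation generator (S3). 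Thus the combinatorial lemmas controlling the pushout-product structure on $\mathbf{MS}$-anodyne maps can be transferred verbatim from their $\bS$-counterparts in AGS\_CartI, and the recognition theorem applies to produce the desired left-proper combinatorial simplicial model structure.
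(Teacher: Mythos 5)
Your overall skeleton matches what the paper intends: the paper does not prove this statement at all, but imports it from [AGS\_CartII, Thm 2.45] with the remark that it is proved ``in a similar way'' to \autoref{thm:MBModelStructure}, i.e.\ by defining weak equivalences via mapping objects into the fibrant objects, verifying a recognition criterion for combinatorial model structures, and checking pushout-product statements against the generating anodyne maps; your uniqueness argument (a combinatorial model structure is determined by its cofibrations and its fibrant objects) is also the right one.

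The genuine gap is your shortcut for the combinatorial core. The adjunction you describe, $(X,E_X,T_X)\mapsto (X,E_X,T_X\subseteq T_X)$ against ``forgetting the distinction,'' does not carry the generating sets into one another, so the pushout-product and saturation lemmas cannot be ``transferred verbatim.'' Concretely, the generators \ref{mb:leftdeglefthorn}, \ref{mb:2Cartesianmorphs} and \ref{mb:coCartoverThin}--\ref{mb:coCart2of3} of \autoref{def:mbsanodyne} have genuinely distinct thin and lean scalings (e.g.\ \ref{mb:2Cartesianmorphs} has $\flat\subset\{\Delta^{\{0,n-1,n\}}\}$), so they are not of the form $C_X=T_X$ and do not lie in the image of your embedding; conversely, forgetting the lean scaling sends \ref{mb:2Cartesianmorphs} to a map with no scaled triangle at all, which is not \ref{MS:nhorn} (the triangle $\Delta^{\{0,n-1,n\}}$ in \ref{MS:nhorn} corresponds to a \emph{lean}, not thin, triangle), and sending \ref{MS:nhorn} into $\mbsSet$ by either adjoint produces a map that is not obviously in the saturated hull of the \textbf{MB} generators. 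The other candidate adjunction, $(X,E_X,T_X)\mapsto(X,E_X,\flat\subseteq T_X)$ against ``take the lean scaling,'' does match \ref{MS:0horn}/\ref{MS:nhorn} with \ref{mb:leftdeglefthorn}/\ref{mb:2Cartesianmorphs} but then fails for \ref{MS:inner}, \ref{MS:wonky4}, \ref{MS:Compose} and \ref{MS:kan}. So no formal transfer along a single functor exists, and the pushout-product verifications have to be rerun directly against the $\mathbf{MS}$ generators --- which is precisely what the cited proof in AGS\_CartII does, analogously to but independently of the \textbf{MB} case (note that in this very paper, \autoref{lem:FreeFibPP} relates \textbf{MB}-anodyne and $\mathbf{MS}$-anodyne maps only through a careful case-by-case analysis, not a formal adjunction argument). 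Two smaller glosses: accessibility/perfectness of your class of weak equivalences is not ``formal from local presentability'' and needs an argument, and identifying the fibrant objects of the resulting model structure with exactly the objects having the right lifting property against $\mathbf{MS}$ (condition F) is a separate step your plan does not address.
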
 

We further prove in \cite[Lemma 2.49]{AGS_CartII} that this model structure is equivalent to the model structure on scaled simplicial sets. This model structure plays a fundamental role in our main theorem in \cite{AGS_CartII}

 \begin{theorem}[\cite{AGS_CartII} Theorem 3.85]
    Let $S$ be an scaled simplicial set. Then the bicategorical straightening-unstraightening adjunction defines a Quillen equivalence 
    \[
    \ST_S: (\mbsSet)_{/S} \llra \left(\mssSet\right)^{\mathfrak{C}[S]^\op}:\UN_S
    \]
    between the model structure on (outer) 2-Cartesian fibrations over $S$ and the projective model structure on $\msSet$-enriched functors $\mathfrak{C}[S]^\op \to \mssSet$ with values in marked-scaled simplicial sets.
  \end{theorem}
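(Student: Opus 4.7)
The plan is to give an explicit construction of $\mathbb{F}(\bcat{C})$ as a marked biscaled simplicial set over $\bcat{D}$, verify that $\mathbb{F}(\bcat{C})\to\bcat{D}$ is an outer 2-Cartesian fibration, and then show that the canonical section $s\colon \bcat{C}\to \mathbb{F}(\bcat{C})$ is a weak equivalence by checking the universal property defining weak equivalences in the model structure of \autoref{thm:MBModelStructure}.

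First, I would define $\mathbb{F}(\bcat{C})$ so that its $n$-simplices consist of an $n$-simplex $\sigma\colon\Delta^n\to\bcat{C}$ together with an $n$-simplex of $\bcat{D}$ and a coherent lax factorization through $f\circ\sigma$; concretely, this should be realized as a subobject of $\bcat{C}\times_{\bcat{D}}\on{Fun}^{\mathbf{mb}}(\Delta^1,\bcat{D})$, with the map to $\bcat{D}$ given by evaluation at $0$ and the map to $\bcat{C}$ given by evaluation at $1$. The marking and two scalings are chosen to match the intended target functor $d\mapsto \bcat{C}^{\natural}_{d\upslash}$: an edge is marked precisely when its $\bcat{D}$-component is an equivalence, lean triangles encode $p$-Cartesian composition data, and thin triangles are those which are thin in both factors. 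The comparison $s$ then sends $\sigma$ to $(\sigma,\id_{f\sigma})$ with degenerate second component, which is visibly a map over $\bcat{D}$ compatible with the induced marking and biscaling on $\bcat{C}$.

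Next, I would verify that $\mathbb{F}(\bcat{C})\to\bcat{D}$ satisfies the right lifting property against each generating anodyne map of \autoref{def:mbsanodyne}. The essential content lies in \ref{mb:2CartliftsExist}: given $v\colon d'\to d$ in $\bcat{D}$ and an object $(c,u\colon d\to f(c))$, the $p$-Cartesian lift of $v$ is the object $(c,u\circ v)$ with identity connecting $2$-cell. The inner-horn condition \ref{mb:innerhorn} and the composition-saturation conditions \ref{mb:composeacrossthin}--\ref{mb:coCart2of3} follow from the corresponding properties of $\bcat{D}$ together with the behavior of the path object $\on{Fun}^{\mathbf{mb}}(\Delta^1,\bcat{D})$, the latter being controlled by \autoref{prop:bsfibfun}.

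Finally, to show $s$ is a weak equivalence, I would appeal to \autoref{def:weakequiv1} and prove that, for any 2-Cartesian fibration $\pi\colon X\to\bcat{D}$, the induced map
\[
s^{*}\colon \on{Map}_{\bcat{D}}(\mathbb{F}(\bcat{C}),X) \longrightarrow \on{Map}_{\bcat{D}}(\bcat{C},X)
\]
is a bicategorical equivalence. An inverse is constructed from the Cartesian-lift structure of $\pi$: from $g\colon\bcat{C}\to X$ over $\bcat{D}$, one defines $\tilde g\colon\mathbb{F}(\bcat{C})\to X$ by sending $(c,u\colon d\to f(c))$ to the source of a chosen $\pi$-Cartesian lift of $u$ ending at $g(c)$, and extends to higher simplices by inductive lifting using fibrancy of $X$ against \ref{mb:2Cartesianmorphs} and \ref{mb:2CartliftsExist}. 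The composite $s^{*}\circ(-)$ is the identity on the nose (since identity morphisms admit identity Cartesian lifts), while $(-)\circ s^{*}$ is naturally equivalent to the identity via the universal $2$-cells of the chosen Cartesian lifts. The main obstacle is this last step: upgrading a pointwise choice of Cartesian lifts to an honest morphism of marked biscaled simplicial sets, respecting both the thin and lean scalings. This coherence reduces to a cell-by-cell verification that the chosen lifts satisfy the saturation axioms \ref{mb:composeacrossthin}--\ref{mb:coCart2of3}, combined with essential uniqueness of $\pi$-Cartesian lifts over equivalences supplied by \ref{mb:equivalences}.
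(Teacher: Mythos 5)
Your proposal does not address the statement in question. The theorem to be proved is the straightening--unstraightening Quillen equivalence
\[
\ST_S: (\mbsSet)_{/S} \llra \left(\mssSet\right)^{\mathfrak{C}[S]^\op}:\UN_S,
\]
i.e., the $\infty$-bicategorical Grothendieck construction. A proof of this would have to (at minimum) define the enriched rigidification $\mathfrak{C}[S]$, construct the straightening functor $\ST_S$ and its right adjoint $\UN_S$, verify that the pair is a Quillen adjunction between the \textbf{MB} model structure of \autoref{thm:MBModelStructure} and the projective model structure on $\mssSet$-valued enriched functors, and then show that the derived unit and counit are equivalences (typically by induction over skeleta of $S$, reducing to simplices and analysing the fibres). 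None of these ingredients appears in your write-up. In the paper itself this statement carries no proof: it is recalled verbatim from \cite{AGS_CartII}, Theorem 3.85, as background for the arguments of Section 3.

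What you have sketched instead is a construction of the free 2-Cartesian fibration $\mathbb{F}(\bcat{C})\to\bcat{D}$ associated to a functor $f:\bcat{C}\to\bcat{D}$, together with an argument that the canonical map $\bcat{C}\to\mathbb{F}(\bcat{C})$ is a weak equivalence over $\bcat{D}$. That is the content of \autoref{prop:freefib2Cart} and \autoref{thm:thebigfreeboi} (the free-fibration theorem announced in the introduction), a different result which is used alongside the present statement rather than being a proof of it. Even judged against that target your sketch diverges from the paper's route: the paper builds $\mathbb{F}(\bcat{C})$ as a pullback of the decorated Gray functor object $\Fun^{\widehat{\on{gr}}}(\Delta^1,\bcat{D})$ rather than of $\on{Fun}^{\mathbf{mb}}(\Delta^1,\bcat{D})$, and proves the equivalence by an explicit \bS-anodyne filtration using the extension simplices $E_j^\ast$, not by constructing an inverse on mapping objects $\on{Map}_{\bcat{D}}(-,X)$. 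But the more fundamental issue is that the proposal is aimed at the wrong theorem and leaves the actual statement untouched.
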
 

 We further show that both model categories are in fact $\on{Set}^+_\Delta$-enriched categories. After performing elementary explicit verifications we prove that the functor $\UN_S$ is compatible with the (co)tensoring yielding an upgrade of the previous theorem to an intrinsinc bicategorical result.
  
  \begin{theorem}[\cite{AGS_CartII} Corollary 3.90]
    The bicategorical straightening is a left Quillen equivalence for any scaled simplicial set $S$. Moreover, the functor $\UN_S$ provides an equivalence of $(\infty,2)$-categories 
    \[
    2\bcat{C}\!\on{art}(S)\simeq \Fun(S^\op,\bcat{B}\!\on{icat}_\infty).
    \]
  \end{theorem}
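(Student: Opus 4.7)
The first assertion is simply a restatement of Theorem 3.85 above (identifying $\ST_S$ as the left Quillen functor), so the new content is the upgrade of this Quillen equivalence to an equivalence of $(\infty,2)$-categories. The plan is to promote both sides of Theorem 3.85 to $\msSet$-enriched model categories and then verify that $\UN_S$ preserves this enrichment. Once this is done, the standard procedure — taking the homotopy coherent nerve of the fibrant-cofibrant subcategory — converts an $\msSet$-enriched Quillen equivalence between combinatorial model categories into an equivalence of the underlying $(\infty,2)$-categories, and it remains only to identify the two $(\infty,2)$-categories so obtained with those appearing in the statement.

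First, I would equip $(\mbsSet)_{/S}$ with an $\msSet$-enrichment by setting, for $X,Y\to S$, the mapping object to be $\on{Map}^{\isom}_S(X,Y)$ as defined in \autoref{def:underlyingmapping}, with markings given by those edges corresponding to equivalences over $S$. \autoref{prop:bsfibfun} together with the functor $\on{Fun}^{\mathbf{mb}}(-,-)$ of \autoref{def:mappingbicats} furnishes the required tensoring and cotensoring, and compatibility with the model structure of \autoref{thm:MBModelStructure} is checked on generating (trivial) cofibrations. The homotopy coherent nerve of the resulting $\msSet$-enriched fibrant-cofibrant subcategory is, by definition, $2\bcat{C}\!\on{art}(S)$.

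On the diagram side, $\mssSet$ itself carries an $\msSet$-enrichment inherited from the model structure of \autoref{thm:markedscaledmodel} (using \cite[Lemma 2.49]{AGS_CartII} to transfer from the usual scaled model structure on $\on{Set}^{\mathbf{sc}}_{\Delta}$), and this propagates pointwise, via enriched ends, to an $\msSet$-enrichment on $(\mssSet)^{\mathfrak{C}[S]^\op}$. A direct unwinding of the projective model structure on functors, together with the identification of the $(\infty,2)$-category of $\infty$-bicategories as the homotopy coherent nerve of the fibrant objects of $\mssSet$, identifies the resulting $(\infty,2)$-category of fibrant-cofibrant diagrams with $\Fun(S^\op,\bcat{B}\!\on{icat}_\infty)$.

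The central technical step — and the main obstacle — is to show that $\UN_S$ is an $\msSet$-enriched functor, or equivalently (by adjunction) that $\ST_S$ intertwines the respective tensorings:
\[
\ST_S(K \otimes X) \;\simeq\; K \otimes \ST_S(X), \qquad K \in \msSet,\ X \in (\mbsSet)_{/S}.
\]
Since $\ST_S$ is built via an explicit formula involving homotopy coherent simplices and Gray tensor products, this compatibility reduces to an elementary but bookkeeping-heavy inspection of how tensoring by a marked simplicial set distributes across this formula, with careful attention paid to the two scalings and the marking. Granted this compatibility, Theorem 3.85 upgrades, via passage to the homotopy coherent nerve of the fibrant-cofibrant subcategories on both sides, to the asserted equivalence of $(\infty,2)$-categories.
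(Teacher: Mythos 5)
Your proposal follows essentially the same route the authors indicate for \cite{AGS_CartII}, Corollary 3.90: equip both model categories of the straightening--unstraightening Quillen equivalence with an $\on{Set}^+_\Delta$-enrichment, verify that the adjunction is compatible with the (co)tensoring, and pass to homotopy coherent nerves of fibrant-cofibrant objects; your reformulation of the key verification in terms of $\ST_S$ and the tensoring (rather than $\UN_S$ and the cotensoring, as the authors phrase it) is equivalent by adjunction. One correction is needed: the hom-object on the fibration side must be the mapping $\infty$-category $\on{Map}^{\on{th}}_{S}(X,Y)$ of \autoref{def:underlyingmapping}, with its equivalences marked --- not $\on{Map}^{\isom}_S(X,Y)$, which is already the maximal Kan complex (so ``marking the equivalences'' marks everything) and would therefore discard all non-invertible 2-morphisms of $2\bcat{C}\!\on{art}(S)$, yielding only an equivalence of underlying $(\infty,1)$-categories rather than the asserted equivalence of $(\infty,2)$-categories.
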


\section{The main theorem}
In previous work \cite{AG_cof}, \cite{AGSQuillen} we have studied 2-categorical notions of (co)limits in $\infty$-bicategories. In this section we will prove the cofinality conjecture as stated in \cite{AGSQuillen}. Let us remark that in previous work we only considered specific instances of colimits: 
\begin{itemize}
  \item In \cite{AGSQuillen} we considered diagrams $F:\mathbb{D}^\dagger \to \mathbb{A}$ where both $\DD$ and $\mathbb{A}$ are strict 2-categories and $\DD^\dagger$ comes equipped with a collection of marked edges.
  \item In \cite{AG_cof} we considered diagrams $F:\scr{D}^\dagger \to \bcat{A}$ where $\D^\dagger$ is an $\infty$-category equipped with a collection of marked edges and $\bcat{A}$ is an $\infty$-bicategory.
\end{itemize}
The general notion of partially lax (co)limit has been extensively studied by Gagna, Harpaz and Lanari in \cite{GHL_LaxLim}. It was noted by the authors that their notion coincides with ours in the cases we studied as shown in \cite[Remark 5.2.12.]{GHL_LaxLim}. Moreover, the notion of cofinality studied in \cite{AG_cof} also agrees with the definition of cofinal functor given in \cite{GHL_LaxLim}. However, in the previous document no computational criterion is given to determine whether a functor of marked $\infty$-bicategories $f:\bcat{C}^\dagger \to \bcat{D}^\dagger$ is cofinal. We will provide such criterion as the main result in this section extending the well-known conditions of Quillen's Theorem A.

Recall that in \cite{GHL_LaxLim} the authors define in Section 4.1 a marked-scaled version of the Gray product. Given a marked scaled simplicial set $K$ and a $\infty$-bicategory $\bcat{D}$ it follows that we have a  $\infty$-bicategory $\on{Fun}^{\on{gr}}(K,\bcat{D})$ of  functors and partially lax natural transformations where the level of laxness is specified by the marking of $K$ (see Remark 4.1.13 in \cite{GHL_LaxLim}). Following their notation we will denote the mapping $\infty$-category of $\on{Fun}^{\on{gr}}(K,\bcat{D})$ by $\on{Nat}^{\on{gr}}_K(-,-)$.

\begin{remark}
  The theory of partially lax colimit comes in two variances: inner and outer colimits. In this document we will only study outer colimits which we will simply call (partially lax) colimits where the marking will be apparent in our notation.
\end{remark}

\begin{definition}
  Let $K^\dagger$ be a marked scaled simplicial set and let $F:K \to \bcat{A}$ be a functor of scaled simplicial sets where $K$ denotes the underlying scaled simplicial set of $K^\dagger$. The outer partially lax colimit of $F$ denoted by $\colimdag_K F$ is an object of $\bcat{A}$ corepresenting the functor
  \[
    \func{\on{Nat}^{\on{gr}}_K(F,c_K(\mathblank)): \bcat{A} \to \bcat{C}\!\on{at}_\infty; a \mapsto \on{Nat}^{\on{gr}}_K(F,\underline{a})}
  \]
  where $\underline{a}:K \to \bcat{A}$ is the constant functor with value $a \in \bcat{A}$.
\end{definition}

\subsection{The free 2-Cartesian fibration}\label{subsec:FreeFib}

Throughout this section, we fix an $\infty$-bicategory $\bcat{D}$, and aim to construct, for each functor of $\infty$-bicategories $p:\bcat{X}\to \bcat{D}$, a 2-Cartesian fibration $\mathbb{F}(p):\mathbb{F}(\bcat{X})\to \bcat{D}$. We will characterize this fibration as the \emph{free 2-Cartesian fibration on $p$.}

To construct this 2-Cartesian fibration, we will make use of the Gray tensor products constructed in \cite[\S 2.1]{GHL_Gray} and \cite[\S 4.1]{GHL_LaxLim}. To this end, we briefly recall the definitions we will need from \cite{GHL_Gray}.  

\begin{definition}
  Let $X,Y\in \scsSet$. We define the \emph{Gray product} $X\otimes Y$ to be the scaled simplicial set with underlying simplicial set $X\times Y$, where we declare a 2-simplex  $(\sigma_X,\sigma_Y)$ to be scaled if and only if the following two conditions are both satisfied
  \begin{itemize}
    \item $(\sigma_X,\sigma_Y)\in T_X\times T_Y$. 
    \item Either the image of $\sigma_X$  degenerates along $\Delta^{\{1,2\}}$ or $\sigma_Y$
    degenerates along $\Delta^{\{0,1\}}$.
  \end{itemize}
  Given scaled simplicial sets $X$ and $Y$, we define the \emph{Gray functor category} $\Fun^{\on{gr}}(X,Y)$ by the adjunction 
  \[
  \Hom_{\scsSet}(S,\Fun^{\on{gr}}(X,Y))\cong \Hom_{\scsSet}(X\otimes S,Y).
  \] 
  There is a dual version, defined by replacing $X\otimes S$ by $S\otimes X$, which we denote by $\Fun^{\on{opgr}}(X,Y)$. 
\end{definition}

\begin{proposition}
  Let $\bcat{D}$ an $\infty$-bicategory. Then the map $\func{\on{ev}_0:\Fun^{\on{gr}}(\Delta^1,\bcat{D}) \to \bcat{D}}$ is a 2-Cartesian fibration. The collection of Cartesian edges, thin triangles, and lean (coCartesian) triangles can be described as follows:
  \begin{itemize}
    \item An edge represented by a map $e:\Delta^1 \otimes \Delta^1 \to \bcat{D}$ is Cartesian if and only if it factors through $\Delta^1 \times \Delta^1$ and the restriction to $\Delta^{\set{1}}\times\Delta^1$ is an equivalence in $\bcat{D}$.
    \item A triangle represented by a map $\sigma:\Delta^1\otimes \Delta^2 \to \bcat{D}$ is lean if and only if its restriction to $\Delta^{\set{1}}\times \Delta^2$ is thin in $\bcat{D}$.
    \item A triangle represented by a map $\sigma:\Delta^1\otimes \Delta^2  \to \bcat{D}$  is thin if and only if it is lean and its restriction to $\Delta^{\{0\}}\times \Delta^2$ is thin.
  \end{itemize}
\end{proposition}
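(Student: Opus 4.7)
The plan is to verify the lifting axioms for $\bS$-fibrations by adjoint transposition through the Gray tensor product. Fix $\bcat{D}$ as an $\infty$-bicategory and equip $\Fun^{\on{gr}}(\Delta^1,\bcat{D})$ with the marking and the two scalings described in the statement. A lifting problem of the generating $\bS$-anodyne map $(A',E',T'\subseteq C')\to (A,E,T\subseteq C)$ against $\on{ev}_0$ corresponds, via the Gray adjunction, to an extension problem
\[
\begin{tikzcd}
\Delta^1\otimes A'\cup_{\Delta^{\{0\}}\otimes A'}\Delta^{\{0\}}\otimes A \arrow[r] \arrow[d] & \bcat{D} \\
\Delta^1\otimes A \arrow[ur,dashed] &
\end{tikzcd}
\]
in the category of scaled simplicial sets, where the scalings on the source and target are induced from the given scalings $T',C'$ and $T,C$ together with the Gray product scaling rule (both components scaled, and the first or second component appropriately degenerate). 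Since $\bcat{D}$ is fibrant for the scaled model structure, it suffices to show that each such pushout-product map is scaled anodyne.

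The bulk of the work is therefore a case-by-case combinatorial analysis of $\Delta^1\otimes \Delta^n$ and its scaling, for each generator of \autoref{def:mbsanodyne}. For the inner horn inclusion \ref{mb:innerhorn}, the pushout-product unfolds as a sequence of inner horn fillings in the prism $\Delta^1\times\Delta^n$, each of whose critical triangles is scaled by the Gray rule, so the result is scaled anodyne by inner horn extension plus the $U_i$-type saturation \ref{mb:innersaturation} applied in the base. For the $p$-Cartesian generators \ref{mb:2Cartesianmorphs} and the existence generator \ref{mb:2CartliftsExist}, we use the second bullet of the statement: marking an edge $e:\Delta^1\otimes\Delta^1\to \bcat{D}$ as Cartesian precisely records that $e|_{\Delta^{\{1\}}\times\Delta^1}$ is invertible in $\bcat{D}$, and the pushout-product with $\Lambda^n_n\to \Delta^n$ then reduces to a right-horn filling where the last edge is an equivalence in $\bcat{D}$---a standard scaled anodyne extension. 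The generators \ref{mb:composeacrossthin}, \ref{mb:coCartoverThin}, \ref{mb:coCart2of3}, and \ref{mb:dualcocart2of3} all reduce, after Gray tensoring with $\Delta^1$, to saturation statements about thin triangles in $\bcat{D}$, which hold because $\bcat{D}$ is an $\infty$-bicategory. The generator \ref{mb:wonky4} is the most delicate: one must track carefully which of the new scaled triangles in $\Delta^1\otimes \Delta^4$ come from the Gray rule and which must be produced by the \ref{mb:wonky4} generator in $\bcat{D}$ itself, and then combine these with inner horn saturation. Finally, \ref{mb:equivalences} follows because equivalences in $\Fun^{\on{gr}}(\Delta^1,\bcat{D})$ restrict to equivalences on both ends.

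Having established that $\on{ev}_0$ is a $\bS$-fibration, the characterizations in the three bullets are proved by isolating what each lifting condition demands. That an edge is Cartesian iff its restriction to $\Delta^{\{1\}}\times\Delta^1$ is an equivalence follows by applying \ref{mb:2Cartesianmorphs} with $n=2$, which after adjunction produces exactly the two-out-of-three criterion for an equivalence in $\bcat{D}$; the fact that Cartesian edges automatically factor through $\Delta^1\times \Delta^1$ is a consequence of the Gray scaling forcing the twist triangle to be non-scaled unless the top edge degenerates. The lean characterization follows from \ref{mb:leftdeglefthorn} applied at $n=2$, which forces the restriction to $\Delta^{\{1\}}\times \Delta^2$ to be thin; and the thin characterization is then the combination of lean together with \ref{mb:coCartoverThin} applied fiberwise, which forces the triangle downstairs, i.e.\ the restriction to $\Delta^{\{0\}}\times \Delta^2$, to be thin as well.

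The main obstacle I anticipate is managing the scalings on the pushout-product $\Delta^1\otimes A' \cup \Delta^{\{0\}}\otimes A \to \Delta^1\otimes A$: the Gray product mixes scaled and non-scaled triangles in a way that does not respect taking pushouts along $\Delta^{\{0\}}\otimes(-)$ on the nose, so one must do a careful filtration argument (attaching non-degenerate simplices of $\Delta^1\otimes A$ in dimension order) and at each stage verify that the triangle needed to complete the filling is either already scaled by the Gray rule or can be produced by one of the saturation generators in the base. This filtration is where the verification for the wonky 4-simplex \ref{mb:wonky4} and the saturation generators \ref{mb:innersaturation}--\ref{mb:coCart2of3} genuinely interact, and it is the technical heart of the argument.
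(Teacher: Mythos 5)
Your proposal takes a genuinely different route from the paper: the paper's entire proof consists of observing that, over an $\infty$-bicategory, the notion of outer 2-Cartesian fibration from \cite{AGS_CartI} coincides with that of \cite{GHL_Cart}, and then invoking \cite[Thm 2.2.6]{GHL_Cart}, from which the statement, including the description of the Cartesian edges and of the two scalings, follows immediately. What you sketch is essentially a from-scratch reproof of that cited result via the Gray adjunction and a pushout-product/filtration analysis against the generators of \autoref{def:mbsanodyne}; this is the standard strategy, and it closely parallels the machinery the paper does develop later for the free fibration, namely $\Delta^1\widehat{\otimes}(-)$, \autoref{lem:FreeFibPP} and \autoref{prop:freefib2Cart}. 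Your route buys self-containedness at the cost of redoing a substantial combinatorial analysis that the paper deliberately outsources.

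Two cautions if you intend to carry the sketch out. First, the reduction ``it suffices to show that each pushout-product is scaled anodyne'' is not accurate as stated: the markings and the lean scaling of the generators do not transpose into scalings on $\Delta^1\otimes A$ --- ``restricts to an equivalence on $\Delta^{\{1\}}\times\Delta^1$'' is not a scaling condition --- so for \ref{mb:2Cartesianmorphs} and \ref{mb:2CartliftsExist} the extension into $\bcat{D}$ requires filling special outer horns whose last edge is an equivalence (using fibrancy of $\bcat{D}$, not scaled-anodynicity of the inclusion), while \ref{mb:composeacrossthin}, \ref{mb:coCart2of3} and \ref{mb:equivalences} amount to directly verifying closure properties of the described decorations rather than solving any extension problem. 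The clean way to keep track of this is to work with marked-scaled simplicial sets, exactly as the paper's $\widehat{\otimes}$ construction and the notion of $\mathbf{MS}$-anodyne map do in \autoref{lem:FreeFibPP}. Second, your closing paragraph ``deriving'' the three bullet characterizations from the lifting axioms has the logic backwards: the decorations are fixed at the outset (as you do in your first sentence), and the content of the proposition is that with these decorations $\on{ev}_0$ has the right lifting property against all of $\mathbf{MB}$; no separate derivation is needed. Neither point is fatal, but both must be repaired before the filtration argument --- which you rightly identify as the technical heart --- can be written down correctly.
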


\begin{proof}
  Since, over an $\infty$-bicategory, the definition of 2-Cartesian we provided in \cite{AGS_CartI} coincides with the notion of \emph{outer 2-Cartesian fibration} from \cite{GHL_Cart}, this follows immediately from \cite[Thm 2.2.6]{GHL_Cart}.
\end{proof}

We can leverage this definition to give an extension of the Gray product, which more fully captures the decoration in this case.

\begin{definition}
  Let $X\in \mbsSet$ and denote by $\widetilde{X}$ its underlying scaled simplicial set. We define $\Delta^1\widehat{\otimes} X\in \mssSet$ extending $\Delta^1 \otimes \widetilde{X}$ by declaring
  \begin{itemize}
    \item A 1-simplex $(\sigma_1,\sigma_X)$ is marked if it is degenerate, or if $\sigma_1$ is degenerate on $\{1\}$, and $\sigma_X$ is marked in $X$. 
    \item A 2-simplex $(\sigma_1,\sigma_X)$ is thin if \emph{any} of the following conditions hold.
    \begin{itemize}
      \item The simplex $(\sigma_1,\sigma_X)$ is thin in $\Delta^1\otimes \widetilde{X}$. 
      \item The simplex $\sigma_X$ is lean in $X$ and $\sigma_1(1 \to 2)$ is degenerate on $1$.
      \item The simplex $\sigma_X$ is lean in $X$, $\sigma_X(0\to 1)$ is marked in $X$ and the simplex $\sigma_1$ is of the form $0 \to 0 \to 1$.
    \end{itemize}
  \end{itemize}  
  For $X,Y\in \mssSet$, we can then define $\Fun^{\widehat{\on{gr}}}(\Delta^1,Y)\in \mbsSet$ via the adjunction
  \[
  \Hom_{\mbsSet}(S,\Fun^{\widehat{\on{gr}}}(\Delta^1,Y))\cong \Hom_{\mssSet}(\Delta^1\otimes S,Y).
  \] 
\end{definition}

\begin{remark}
  For $X=(X,M_X,T_X\subset C_X)$, we will denote by $\{0\}\widehat{\otimes} X$ the full \textbf{MS} simplicial subset of $\Delta^1\widehat{\otimes} X$ corresponding to $\{0\}\times X$. This is isomorphic to the \textbf{MS} simplicial set $(X,\flat,T_X)$. Similarly, we denote by $\{1\}\widehat{\otimes} X$ the \textbf{MS} simplicial set $(X,M_X,C_X)$.
\end{remark}

We can then define the free 2-Cartesian fibration.

\begin{definition}
  Let $p:\bcat{X}\to \bcat{D}$ be a functor of $\infty$-bicategories. Denote by $\bcat{X}^\natural=(\bcat{X},M_X,T_X\subset T_X)$ the associated \textbf{MB} simplicial set in which the equivalences are marked. We define an \textbf{MB} simplicial set $\mathbb{F}(\bcat{X})^\natural$ as the pullback
  \[
  \begin{tikzcd}
  \mathbb{F}(\bcat{X})^\natural\arrow[d]\arrow[r] & \Fun^{\widehat{\on{gr}}}(\Delta^1,\bcat{D})\arrow[d,"\on{ev}_1"] \\
  \bcat{X}^\natural \arrow[r,"p"'] & \bcat{D} 
  \end{tikzcd}
  \]
  We denote the natural map induced by evaluation at $0$ by $\func{\mathbb{F}(p):\mathbb{F}(\bcat{X})^\natural \to \bcat{D}}$. 
\end{definition}

\begin{proposition}\label{prop:freefib2Cart}
  Let $p:\bcat{X}\to \bcat{D}$ be a functor of $\infty$-bicategories. Then 
  \[
  \func{\mathbb{F}(p):\mathbb{F}(\bcat{X})^\natural \to \bcat{D}}
  \]
  is a 2-Cartesian fibration
\end{proposition}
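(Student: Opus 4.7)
The plan is to reduce the 2-Cartesian fibration property of $\mathbb{F}(p)$ to a pushout-product statement for the extended Gray tensor $\widehat{\otimes}$, combined with a fibrancy property of $\bcat{X}^\natural$. By Theorem \ref{thm:MBModelStructure}, it suffices to verify the right lifting property against each generator in Definition \ref{def:mbsanodyne}. Given such a generator $\iota\colon A \hookrightarrow B$, unfolding the pullback defining $\mathbb{F}(\bcat{X})^\natural$ shows that a lifting problem for $\mathbb{F}(p)$ is equivalent to the data of maps $\alpha\colon \Delta^1\widehat{\otimes} A \to \bcat{D}$, $g\colon A \to \bcat{X}^\natural$, and $\beta\colon B \to \bcat{D}$ satisfying $\alpha|_{\{1\}\widehat{\otimes} A} = p\circ g$ and $\alpha|_{\{0\}\widehat{\otimes} A} = \beta\circ \iota$. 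A lift corresponds to compatible extensions $\tilde\alpha\colon \Delta^1\widehat{\otimes} B \to \bcat{D}$ and $\tilde g\colon B \to \bcat{X}^\natural$ with $\tilde\alpha|_{\{0\}\widehat{\otimes} B} = \beta$.

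The lift will then be constructed in two stages. First, I would extend $g$ to $\tilde g\colon B \to \bcat{X}^\natural$: since $\bcat{X}$ is an infinity bicategory and the marking of $\bcat{X}^\natural$ consists of equivalences, $\bcat{X}^\natural$ is fibrant in the $\mathbf{MB}$ model structure over $\Delta^0$, so every $\mathbf{MB}$-anodyne map admits an extension. In the second stage, after assembling $\alpha$, $\beta$, and $p\circ \tilde g$ into a single map out of the pushout
\[
P(\iota) \;\coloneqq\; (\{0\} \widehat{\otimes} B) \cup_{\{0\}\widehat{\otimes} A} (\Delta^1 \widehat{\otimes} A) \cup_{\{1\}\widehat{\otimes} A} (\{1\}\widehat{\otimes} B),
\]
one must extend it to $\tilde\alpha\colon \Delta^1\widehat{\otimes} B \to \bcat{D}$. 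Since $\bcat{D}$ is fibrant in the $\mathbf{MS}$ model structure of Theorem \ref{thm:markedscaledmodel}, this extension exists provided the inclusion $P(\iota) \hookrightarrow \Delta^1\widehat{\otimes} B$ is $\mathbf{MS}$-anodyne.

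The remainder of the proof is then a case analysis, verifying for each generator $\iota$ in \ref{mb:innerhorn}--\ref{mb:equivalences} that the pushout-product map $P(\iota) \to \Delta^1\widehat{\otimes} B$ lies in the saturation of the $\mathbf{MS}$ generators. The inner horn case \ref{mb:innerhorn} and the saturation cases \ref{mb:composeacrossthin}--\ref{mb:coCart2of3} reduce to their $\mathbf{MS}$ counterparts once one unpacks the scaling rules in the definition of $\widehat{\otimes}$. The equivalence generator \ref{mb:equivalences} follows from the fact that tensoring an invertible edge on the $\{1\}$-side with $\Delta^1$ produces simplices whose $\{1\}$-restrictions remain equivalences. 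The main technical obstacle lies in the generators \ref{mb:wonky4}, \ref{mb:leftdeglefthorn}, \ref{mb:2Cartesianmorphs}, and \ref{mb:2CartliftsExist}, where one must construct an explicit filtration of $\Delta^1\widehat{\otimes} B$ relative to $P(\iota)$ by a finite sequence of $\mathbf{MS}$-anodyne attachments. The extra thin triangles added by $\widehat{\otimes}$---in particular those coming from lean simplices whose $\{0\to 1\}$-edge is marked and whose $\sigma_1$-component has the shape $0\to 0\to 1$---are precisely what allow these decompositions to exist; careful bookkeeping of which triangles of $\Delta^1\widehat{\otimes} B$ are already thin in $P(\iota)$ is the delicate heart of the verification, and in particular ensures that \ref{mb:2Cartesianmorphs} and \ref{mb:2CartliftsExist} translate into genuine Cartesian-edge data for $\mathbb{F}(p)$ rather than merely inner-horn fillings.
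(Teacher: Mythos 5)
Your proposal takes essentially the same route as the paper: the paper likewise splits the lifting problem into (a) extending the map into $\bcat{X}$ over the $\{1\}$-face, using that $\{1\}\widehat{\otimes}A\to\{1\}\widehat{\otimes}B$ is $\mathbf{MS}$-anodyne (equivalently, the fibrancy of $\bcat{X}^\natural$), and (b) an extension into the fibrant $\bcat{D}$ along the pushout-product inclusion, whose $\mathbf{MS}$-anodyneness is exactly the content of \autoref{lem:FreeFibPP}, proved by the same case-by-case analysis over the generators of \autoref{def:mbsanodyne} that you outline, with filtration arguments for the hard horn cases. The only cosmetic differences are that the paper attaches the $\{1\}$-face by a separate pushout and concludes via 2-out-of-3 rather than showing your $P(\iota)\hookrightarrow\Delta^1\widehat{\otimes}B$ is anodyne outright, and that the genuinely delicate filtrations are needed for \ref{mb:leftdeglefthorn} and \ref{mb:2Cartesianmorphs}, whereas \ref{mb:wonky4} and \ref{mb:2CartliftsExist} are dispatched by single pushouts of $\mathbf{MS}$ generators.
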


This proposition will follow from a somewhat technical lemma. 

\begin{lemma}\label{lem:FreeFibPP}
  Let $\func{f:A\to B}$ be an \bS-anodyne morphism. Then 
  \[
  \func{
    \Delta^1\widehat{\otimes} A \coprod_{\{0\}\widehat{\otimes} A} \{0\}\widehat{\otimes} B \to \Delta^1 \widehat{\otimes} B 
  }
  \] 
  is $\mathbf{MS}$-anodyne.
\end{lemma}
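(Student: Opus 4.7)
The plan is to use weak saturation to reduce to verifying the claim for each generating $\bS$-anodyne map listed in \autoref{def:mbsanodyne}. Since the functor $\Delta^1 \widehat{\otimes}(-)$ preserves colimits, and the class of $\mathbf{MS}$-anodyne morphisms is weakly saturated, it is enough to check the pushout-product inclusion
\[
\Delta^1 \widehat{\otimes} A \coprod_{\{0\}\widehat{\otimes} A} \{0\}\widehat{\otimes} B \hookrightarrow \Delta^1 \widehat{\otimes} B
\]
one generator at a time, running through (A1)--(A5), (S1)--(S5), and (E).

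For each generator, the verification will follow a common template. The underlying simplicial-set map of the pushout-product is the familiar inclusion $\Delta^1 \times A \cup_{\{0\}\times A} \{0\}\times B \hookrightarrow \Delta^1 \times B$, which can be filtered via the prism decomposition of $\Delta^1 \times \Delta^n$ into $(n+1)$ many $(n+1)$-simplices, attached one at a time. At each step, the decoration rules of $\Delta^1 \widehat{\otimes}(-)$ determine the scalings and markings of the attaching horn, and the task reduces to identifying this horn with one of the $\mathbf{MS}$-generators (MS1)--(MS8), (MSE), or with an instance of \autoref{lem:Ui_MS-anodyne}. For the ``shape'' generators (A1)--(A3), this is the standard inner/left-horn computation, carried out in the scaled setting. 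For the marking-related generators (A4) and (A5), the crucial observation is that, by the decoration rules, any newly marked edge in $\Delta^1 \widehat{\otimes} B$ must lie in the $\{1\}$-slice (since marked non-degenerate edges require the $\Delta^1$-component to be degenerate at $1$), which reduces the pushout-product directly to an instance of (MS4) or (MS5).

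The main obstacle, and where I expect the bulk of the work to lie, is the scaling-saturation generators (S1)--(S5) and the equivalence generator (E). Here one must carefully track the three distinct sources of thin $2$-simplices in $\Delta^1 \widehat{\otimes} X$ --- those that are thin already in the ordinary Gray product $\Delta^1 \otimes \widetilde{X}$, those with lean $X$-component paired with a $\Delta^1$-component degenerate on $\{1\}$, and those with marked $X$-component sitting over the pattern $0 \to 0 \to 1$ --- and show that the lean/coCartesian $2$-out-of-$3$ type conclusions needed on the $B$-side can be obtained from the hypothesis together with (MS6)--(MS8) and (MSE). The combinatorial analysis for (S3)--(S5) requires the full prism decomposition of $\Delta^1 \times \Delta^3$; in each case, however, the verification reduces to a finite enumeration of which faces of the four tetrahedra in that prism are forced to be thin by the $\widehat{\otimes}$ rules, and then to assembling the resulting fillings in an order that lets each step be recognized as one of the saturated $\mathbf{MS}$-generators.
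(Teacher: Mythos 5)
Your overall strategy is the same as the paper's: reduce to the generating \bS-anodyne maps, dispatch the decoration-only generators via (MS2), (MS6)--(MS8), (MSE) and \autoref{lem:Ui_MS-anodyne}, and handle the horn generators by a prism-by-prism analysis with the decorations of $\Delta^1\widehat{\otimes}(-)$ tracked. As a plan it is viable, but two of your explicit claims misjudge where the content lies. First, (A4) and (A5) do not ``reduce directly to an instance of (MS4) or (MS5)'': in the paper, (A5) factors as pushouts of types \ref{MS:inner}, \ref{MS:Cartlifts} and \ref{MS:nhorn}, and (A4) is the real heart of the proof. There one orders the top-dimensional prisms of $\Delta^1\widehat{\otimes}(\Delta^n)^\dagger$ by the vertex at which they jump to the $\{1\}$-level and adds them in that order; the first prism is attached along an \emph{inner} horn (its triangle $\{n-1,n,n+1\}$ is thin precisely because $\{n-1,n\}$ is marked), and each subsequent stage requires \emph{two} pushouts of type \ref{MS:nhorn}, since a missing face must be added before the prism itself. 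Your observation that new marked edges live in the $\{1\}$-slice is the right ingredient for recognizing these as \ref{MS:nhorn}-type attachments, but the ordered filtration and the extra face-additions are exactly the details still to be supplied, and for (A3) the order of attachment must be reversed (with \ref{MS:0horn} in place of \ref{MS:nhorn}).

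Second, your expectation that the bulk of the work lies in (S1)--(S5) and (E) is inverted. For those generators the underlying simplicial map is an isomorphism, so no prism decomposition or ``fillings'' occur at all: one only adds decorations, and each case is an immediate iterated pushout along (MS6)--(MS8), (MSE) and morphisms from \autoref{lem:Ui_MS-anodyne} ((S2) is even an isomorphism, and (A2) is a single \ref{MS:wonky4}-pushout plus such morphisms). So the proposal matches the paper's route and would succeed if executed, but as written it defers, and somewhat underestimates, precisely the step --- the (A3)/(A4) filtration --- where the proof's actual difficulty sits.
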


\begin{proof}
  As usual, we can check on generating \bS-anodyne morphisms. Before commencing the proof, we will make a preliminary definition. We say that a morphism of $\mathbf{MS}$ simplicial sets is of type $(\heartsuit)$ if it is in the weakly saturated hull of morphisms of the type described in \autoref{lem:Ui_MS-anodyne}. We can now proceed to perfom our case-by-case analysis.
  \begin{itemize}
    \item[(A2)] We can scale $\set{1}\widehat{\otimes} \Delta^4$ using a pushout of type \ref{MS:wonky4}. The remaining 2-simplices can be scaled using morphisms of type $(\heartsuit)$.
    \item[(A5)] Is a pushout of type \ref{MS:inner}, a pushout of type \ref{MS:Cartlifts}, and a pushout of type \ref{MS:nhorn}.
    \item[(S1)] Is an iterated pushout of type \ref{MS:Compose} and morphisms of type $(\heartsuit)$.
    \item[(S2)] Is an isomorphism. 
    \item[(S3)] Is a pushout along a morphism of type $(\heartsuit)$.
    \item[(S4)] Is a pushout along morphisms  of type \ref{MS:composedeg4} and $(\heartsuit)$.
    \item[(S5)] Is a pushout along morphisms of type  \ref{MS:composemarked5} and $(\heartsuit)$. 
  \end{itemize}
  The remaining three cases are the horn inclusions.
  \begin{itemize}
    \item[(A1)] Since no morphisms are marked on either side and the lean and thin scalings are identical, this is a consequence of \cite[Prop 4.1.9]{GHL_LaxLim}.
    \item[(A4)] Let us set the notation $(\Delta^n)^{\dagger}=(\Delta^n,\Delta^{\set{n-1,n}},\flat \subset \Delta^{\set{0,n-1,n}})$ and let us similarly define $(\Lambda^n_n)^{\dagger}$ . First we will define an order for the simplices of maximal dimension in $\Delta^1 \widehat{\otimes}(\Delta^n)^\dagger$. Let $\theta_\epsilon:\Delta^{n+1} \to \Delta^1 \widetilde{\otimes} (\Delta^n)^\dagger$ for $\epsilon \in \set{0,1}$ and let $\nu_{\theta_{\epsilon}}$ be the first element in $\Delta^{n+1}$ such that the value of $\theta_\epsilon$ at $\nu_{\theta_{\epsilon}}$ has the first coordinate equal to $1$. We say that $\theta_1 < \theta_2$ if and only if $\nu_{\theta_1}<\nu_{\theta_2}$. We further observe every simplex of maximal dimension is uniquely determined by the value $\nu_\theta$. Consequently we will denote by $\theta_i$ for $i\in \set{1,2,\dots,n+1}$ the unique simplex of maximal dimension that has $\nu_{\theta_i}=i$. We can produce now a filtration
    \[
      \Delta^1 \widehat{\otimes} (\Lambda^n_n)^\dagger \to  Y_{n+1} \to Y_{n} \to \cdots \to  Y_2 \to Y_1=\Delta^1 \widetilde{\otimes} (\Delta^n)^{\dagger}
    \]
    where $Y_j$ is the full $\mathbf{MS}$ subsimplicial set of $\Delta^1 \widetilde{\otimes} (\Delta^n)^{\dagger}$ containing the simplices of $Y_{j+1}$ in addition to the simplex $\theta_j$. It is an straightforward to see that the first map is a pushout along the inner-horn inclusion $\Lambda^{n+1}_n \to \Delta^{n+1}$. Since the edge $n-1 \to n$ is marked in $(\Delta^{n})^{\dagger}$ it follows that the triangle $\set{n-1,n,n+1}$ is thin in $\theta_{n+1}$. The rest of the morphisms are in the weakly satured hull of morphisms of type \ref{MS:nhorn}. Each map $Y_{j+1} \to Y_j$ is obtained precisely after taking two pushouts of type \ref{MS:nhorn}. First we had the face missing $j-1$ of $\theta_j$ and then we add the whole simplex missing.
    
    \item[(A3)] The argument here is precisely dual to the previous case, replacing `marked edge'' with ``degenerate edge'' and ``\ref{MS:nhorn}'' with ``\ref{MS:0horn}''. Note that we also need to reverse the order in which the add the simplices of maximal dimension that are missing.
  \end{itemize}
\end{proof}

\begin{proof}[Proof (of \ref{prop:freefib2Cart})]
  Given a lifting problem 
  \[
  \begin{tikzcd}
  A \arrow[r]\arrow[d,"f"'] & \FF(\bcat{X})^\natural\arrow[d] \\
  B \arrow[r] & \bcat{D}
  \end{tikzcd}
  \]
  where $f$ is \bS-anodyne, we need to find a diagram 
  \[
  \begin{tikzcd}
   \{1\}\widehat{\otimes} B \arrow[r] \arrow[d,hookrightarrow] & \bcat{X} \arrow[d,"p"]\\
    \Delta^1\widehat{\otimes}B  \arrow[r] & \bcat{D}
  \end{tikzcd}
  \]
  extending the diagram
  \[
  \begin{tikzcd}
  \{1\}\widehat{\otimes} A \arrow[r]  \arrow[d,hookrightarrow] & \bcat{X} \arrow[d,"p"']\\
   \Delta^1\widehat{\otimes} A \coprod\limits_{\{0\}\widehat{\otimes} A}\{0\}\widehat{\otimes} B  \arrow[r] & \bcat{D}
  \end{tikzcd}
  \]
  defined by the lifting problem. We first note that, since $\{1\}\widehat{\otimes}A\to \{1\}\widehat{\otimes}B$ is, in particular, \textbf{MS}-anodyne, we can solve the lifting problem on the bottom. It thus remains for us to solve the extension problem
  \[
  \begin{tikzcd}
  \{1\}\widehat{\otimes}B\coprod\limits_{\{1\}\widehat{\otimes}B}\Delta^1\widehat{\otimes} A \coprod\limits_{\{0\}\widehat{\otimes} A}\{0\}\widehat{\otimes} B\arrow[r]\arrow[d] & \bcat{D}^\natural\\
  \Delta^1\widehat{\otimes} B
  \end{tikzcd}
  \]
  However, the morphism on the left fits into a composite. 
  \[
  \begin{tikzcd}
  \Delta^1\widehat{\otimes} A \coprod\limits_{\{0\}\widehat{\otimes} A}\{0\}\widehat{\otimes} B\arrow[r] & \{1\}\widehat{\otimes}B\coprod\limits_{\{1\}\widehat{\otimes}B}\Delta^1\widehat{\otimes} A \coprod\limits_{\{0\}\widehat{\otimes} A}\{0\}\widehat{\otimes} B\arrow[r] & \Delta^1\widehat{\otimes} B
  \end{tikzcd}
  \]
  where the first morphism is a pushout by an \emph{MS}-anodyne morphisms, and the composite is one of the morphisms from \autoref{lem:FreeFibPP}. Consequently, the morphism on the left is an \textbf{MS} trivial cofibration by 2-out-of-3, and thus the lifting problem has a solution.
\end{proof}

\begin{definition}
  Let $p:\bcat{X}\to \bcat{D}$ be a functor of $\infty$-bicategories. We denote by $\bcat{X}_{d\upslash}$ the fibre of $\func{\mathbb{F}(p):\mathbb{F}(\bcat{X})^\natural \to \bcat{D}}$ over $d\in \bcat{D}$. Note that \autoref{prop:freefib2Cart} implies that $\bcat{X}_{d\upslash}$ is an $\infty$-bicategory.
\end{definition}

\begin{remark}
  Unwinding the definition of $\bcat{X}_{d\upslash}$, we see that a morphism from $f:d\to p(x)$ to $g:d\to p(y)$ is given by a diagram
  
  \[
  \begin{tikzcd}
  d \arrow[d,equal]\arrow[dr,""{name=U}]\arrow[r,"f"] & |[alias=UR]| p(x)\arrow[d,"p(u)"]\\
  |[alias=DR]| d \arrow[r,"g"'] & p(y) \arrow[from=U,to=UR,equals,shorten <=0.6em,shorten >= 0.6em] \arrow[from=U, to=DR,Rightarrow,shorten <= 0.6em]
  \end{tikzcd}
  \]
  which, in a strict 2-category, we could view as a diagram 
  
  \[
  \begin{tikzcd}
  & |[alias=Z]| p(x)\arrow[dd,"p(u)"]\\
  d\arrow[ur,"f"]\arrow[dr,"g"',""{name=U}] & \\
  & p(y) \arrow[from=Z,to=U,Rightarrow,shorten <=1.5em,shorten >=1em]
  \end{tikzcd}
  \]
  which justifies our notation. We will later see that, for a functor $F:\CC\to \DD$ of strict 2-categories, there is an equivalence of $\infty$-bicategories
  \[
  \Nerv^{\sc}(\CC_{d\upslash})\simeq \Nerv^{\sc}(\CC)_{d\upslash}.
  \]
  connecting our free fibration to more familiar notions. 
\end{remark}

\begin{remark}
  Let $p:\bcat{X}\to \bcat{D}$ be a functor of $\infty$-bicategories. There is a cofibration 
  \[
  \func{
    \gamma_{\bcat{X}}: \bcat{X} \to \mathbb{F}(\bcat{X})^\natural 
  }
  \]
  over $\bcat{D}$, which sends a simplex $\Delta^n\to \bcat{X}$ to the map $\Delta^1 \widehat{\otimes} \Delta^n\to \Delta^0\otimes \Delta^n \to \bcat{X}\to \bcat{D}$.
\end{remark}

To simplify our examination of this map, we provide a way of constructing `almost degenerate' $(n+1)$-simplices in $\mathbb{F}(\bcat{X})^\natural$ from $n$-simplices in $\mathbb{F}(\bcat{X})^\natural$. 

\begin{construction}\label{constr:extensions}
  For every $0\leq j\leq n$, we define a map 
  \[
  \func*{
    E_j:\Delta^1\times \Delta^{n+1}\to \Delta^1\times \Delta^n;
    (m,r) \mapsto \begin{cases}
    (m,r) & r\leq j\\
    (1,r-1) & r>j 
    \end{cases}
  }
  \]
  It is easy to check that this map respects scalings, and thus yields $E_j:\Delta^1\otimes \Delta^{n+1}\to \Delta^1\otimes \Delta^n$. Moreover, the induced map $\{1\}\times \Delta^{n+1}\to \{1\}\times \Delta^n$ is precisely the degeneracy map $s_j$. 
  
  Given an $n$-simplex $\sigma:\Delta^n\to \mathbb{F}(\bcat{X})$ (possibly having some non-trivial decorations) defined by $\phi^\sigma:\Delta^1 \widehat{\otimes} \Delta^n\to \bcat{D}$ and $\rho^\sigma:\{1\}\times \Delta^n\to \bcat{X}$, we define an $(n+1)$-simplex $E_j^\ast(\sigma)$ by
  \[
  \begin{tikzcd}
  \Delta^1\otimes \Delta^{n+1}\arrow[r,"E_j"] & \Delta^1 \widehat{\otimes} \Delta^n \arrow[r,"\phi^\sigma"] & \bcat{D}\\
  \Delta^{n+1}\times \{1\}\arrow[r,"s_j"']\arrow[u,hookrightarrow] & \{1\}\times\Delta^n\arrow[r,"\rho^\sigma"']\arrow[u,hookrightarrow] & \bcat{X}\arrow[u,"p"']
  \end{tikzcd}
  \]
  We will call $E^\ast_j(\sigma)$ the \emph{$j$\textsuperscript{th} extension of $\sigma$}.
  
  Given a simplex $\sigma:\Delta^n\to \mathbb{F}(\bcat{X})$ as above, we will denote by $\ell^\sigma_0$ the corresponding map $\{1\}\times \Delta^n\to \bcat{X}$. 
\end{construction}

The following lemmata follow immediately from the definition. 

\begin{lemma}\label{lem:facesofextensions}
  Let $\sigma:\Delta^n\to \mathbb{F}(\bcat{X})$. Then the faces of $E_j^\ast(\sigma)$ can be written as follows. 
  \begin{itemize}
    \item If $j=n$, and $s=n+1$, then $d_s(E_j^\ast(\sigma))=\sigma$. 
    \item If $j+1<s\leq n+1$, then $d_s(E_j^\ast(\sigma)=E_j^\ast(d_{s-1}(\sigma))$. 
    \item If $0\leq s<j$, then $d_s(E^\ast_j(\sigma))=E_{j-1}(d_s(\sigma))$. 
    \item If $s=j+1$, then  $d_{j+1}(E^\ast_j(\sigma))=d_{j+1}(E_{j+1}^\ast(\sigma))$.
    \item If $s=j$ and $s\neq 0$, then $d_j(E^\ast_{j}(\sigma))=d_j(E^\ast_{j-1}(\sigma))$
    \item If $s=j=0$, then $d_0(E_0^\ast(\sigma))=\gamma_{\bcat{X}}(d_0(\ell_0^\sigma))$. 
  \end{itemize}
\end{lemma}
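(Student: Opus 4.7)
The plan is to prove the lemma by a direct, case-by-case computation tracking the effect of the face map $d^s$ on the piecewise-linear map $E_j:\Delta^1\times\Delta^{n+1}\to \Delta^1\times\Delta^n$. First, I would unwind the relevant definitions: by \autoref{constr:extensions}, the $(n+1)$-simplex $E_j^\ast(\sigma)$ is given by the pair $(\phi^{E_j^\ast(\sigma)},\rho^{E_j^\ast(\sigma)}) = (\phi^\sigma\circ E_j,\,\ell_0^\sigma\circ s_j)$, and the face $d_s$ corresponds to precomposing with the face map $d^s:\Delta^n\to \Delta^{n+1}$ in the second coordinate. Hence each claimed identity reduces to checking an equality of maps $\Delta^1\times \Delta^n\to \Delta^1\times \Delta^\ast$ (for $\ast\in\{n-1,n\}$) which can be verified vertex-by-vertex.

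Next, I would handle the three ``generic'' simplicial-identity cases (the $j=n$, $s=n+1$ case; the $j+1<s\leq n+1$ case; and the $0\leq s<j$ case) by splitting $\Delta^n$ into the regions $\{k<s\}$ and $\{k\geq s\}$ and, within each, the subregions determined by whether the relevant index lies below or above the threshold controlling $E_j$. In the first case the map $d^{n+1}$ never takes us above $j$, so $E_n\circ (\text{id}\otimes d^{n+1})$ is the identity and reproduces $\sigma$. In the other two cases, a short case split shows that $E_j\circ(\text{id}\otimes d^s)=(\text{id}\otimes d^{s'})\circ E_{j'}$ for the appropriate shifted index $j'\in \{j,j-1\}$ and $s' \in \{s-1,s\}$, giving the expected compatibility with the extensions of the face simplex.

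The two ``degenerate-collapse'' cases ($s=j+1$ and $s=j$ with $s\neq 0$) are the ones where $E_j$ and $E_{j\pm 1}$ differ only on the vertices $(m,j)$ and $(m,j+1)$ of $\Delta^1\times\Delta^{n+1}$, and the face map $d^s$ collapses exactly that pair; a direct computation shows that both composites land on the same map into $\Delta^1\times \Delta^n$. Finally, for the exceptional case $s=j=0$, the composite $E_0\circ (\text{id}\otimes d^0)$ sends every $(m,k)$ to $(1,k)$, so the resulting map factors through the projection $\Delta^1\otimes \Delta^n\to \{1\}\otimes\Delta^n$, which is precisely the data defining the image of $\gamma_{\bcat{X}}$ on the restriction of $E_0^\ast(\sigma)$ to $\{1\}\otimes \Delta^n$.

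The argument is genuinely routine, reducing entirely to symbol-pushing with the explicit combinatorial formulas, so the only real obstacle is bookkeeping. The part most prone to error is ensuring that the index shift on the extension ($j \leadsto j-1$ in case 3 versus $j\leadsto j$ in case 2) and the corresponding index shift on the face ($s\leadsto s$ versus $s\leadsto s-1$) are matched consistently across the case boundaries $s=j$ and $s=j+1$, which is precisely what forces the nontrivial identities in cases 4 and 5. Once these pairs of shifts are tabulated carefully, every case reduces to a one-line check.
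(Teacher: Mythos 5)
Your proposal is correct and is essentially the paper's own argument: the paper offers no proof beyond ``follows immediately from the definition,'' and your vertex-by-vertex computation of $E_j\circ(\id\times d^s)$ together with the matching simplicial identities for the lifts $\rho^\sigma\circ s_j\circ d^s$ is exactly that verification spelled out. One small point worth recording: in the case $s=j=0$ your computation (correctly) produces $\gamma_{\bcat{X}}$ applied to the $n$-simplex $\ell_0^\sigma=\rho^\sigma$ (equivalently, the restriction along $d^0$ of the lift of $E_0^\ast(\sigma)$), which is the dimensionally consistent reading of the formula $\gamma_{\bcat{X}}(d_0(\ell_0^\sigma))$ in the statement.
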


\begin{lemma}\label{lem:degofextensions}
  If $\sigma:\Delta^n\to \mathbb{F}(\bcat{X})$ is degenerate, then for every $0\leq j\leq n$, the $j$\textsuperscript{th} extension $E_j^\ast(\sigma)$ is degenerate. 
\end{lemma}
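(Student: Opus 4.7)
The plan is to unpack the definitions and verify the claim by direct computation. If $\sigma$ is degenerate, we may write $\sigma = s_k(\sigma')$ for some $(n-1)$-simplex $\sigma'$ of $\mathbb{F}(\bcat{X})$ and some $0\le k\le n-1$. Unwinding \autoref{constr:extensions}, this means $\phi^\sigma=\phi^{\sigma'}\circ(\id_{\Delta^1}\times s_k)$ and $\rho^\sigma=\rho^{\sigma'}\circ s_k$, so the task reduces to exhibiting indices $j',k'$ together with identities
\[
(\id_{\Delta^1}\times s_k)\circ E_j \;=\; E_{j'}\circ(\id_{\Delta^1}\times s_{k'}) \qquad\text{and}\qquad s_k\circ s_j \;=\; s_{k'}\circ s_{j'}
\]
of maps $\Delta^1\times\Delta^{n+1}\to\Delta^1\times\Delta^{n-1}$ and $\Delta^{n+1}\to\Delta^{n-1}$, respectively.

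A direct case analysis on the relative position of $k$ and $j$, tracking the effect on $(m,r)$ in each of the regions $r\le j$ and $r>j$ cut out by the definition of $E_j$, yields the following identifications:
\begin{itemize}
\item If $k<j$, then $(\id\times s_k)\circ E_j = E_{j-1}\circ(\id\times s_k)$, so $E_j^\ast(\sigma)=s_k\bigl(E_{j-1}^\ast(\sigma')\bigr)$.
\item If $k=j$ (which forces $j\le n-1$), then $(\id\times s_j)\circ E_j = E_j\circ(\id\times s_{j+1})$, so $E_j^\ast(\sigma)=s_{j+1}\bigl(E_j^\ast(\sigma')\bigr)$.
\item If $k>j$, then $(\id\times s_k)\circ E_j = E_j\circ(\id\times s_{k+1})$, so $E_j^\ast(\sigma)=s_{k+1}\bigl(E_j^\ast(\sigma')\bigr)$.
\end{itemize}
The matching identities on the $\rho$-datum ($s_ks_j=s_{j-1}s_k$, $s_js_j=s_js_{j+1}$, and $s_ks_j=s_js_{k+1}$ in the three respective cases) are all instances of the single simplicial identity $s_{j'}s_{i'}=s_{i'}s_{j'+1}$ for $i'\le j'$ between degeneracies in $\Delta$. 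Thus in every case $E_j^\ast(\sigma)$ is exhibited as $s_{k'}$ applied to another simplex of $\mathbb{F}(\bcat{X})$, hence is degenerate.

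There is no substantive obstacle: the argument is entirely combinatorial, and the only care required is in the bookkeeping of the three cases and verifying the boundary case $k=j=n-1$, where $k'=j+1=n$ remains a valid degeneracy index. This is the reason the paper asserts that the lemma follows ``immediately from the definition.''
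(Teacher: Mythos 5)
Your proof is correct, and it is essentially the argument the paper has in mind: the paper simply asserts that the lemma "follows immediately from the definition," and your case analysis ($k<j$, $k=j$, $k>j$) with the compatibilities $(\id\times s_k)\circ E_j=E_{j'}\circ(\id\times s_{k'})$ and the matching codegeneracy identities is exactly the verification being left implicit. The index bookkeeping (including the boundary case $k=j\leq n-1$ and $k'=k+1\leq n$) checks out.
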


\begin{lemma}\label{lem:extensionofextension}
  Let $\sigma:\Delta^{n-1}\to \mathbb{F}(\bcat{X})$. Then for every $0\leq j\leq n-1$ and every $0\leq i\leq n$, the simplex $E_i^\ast(E_j^\ast(\sigma))$ is degenerate.
\end{lemma}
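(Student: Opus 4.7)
The plan is to prove this via a direct case analysis on whether $i\leq j$ or $i>j$, in each case exhibiting a specific degeneracy $s_k:\Delta^{n+1}\to\Delta^n$ and an $n$-simplex $\tau$ of $\mathbb{F}(\bcat{X})$ such that $E_i^\ast(E_j^\ast(\sigma))=s_k(\tau)$. Unwinding \autoref{constr:extensions}, an $(n+1)$-simplex of $\mathbb{F}(\bcat{X})$ is determined by a pair $(\phi,\rho)$ with $\phi:\Delta^1\widehat{\otimes}\Delta^{n+1}\to\bcat{D}$ and $\rho:\Delta^{n+1}\to\bcat{X}$, and the pair associated to $E_i^\ast(E_j^\ast(\sigma))$ is precisely $(\phi^\sigma\circ E_j\circ E_i,\,\rho^\sigma\circ s_j\circ s_i)$. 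The task therefore reduces to showing that these two composites factor simultaneously through $\on{id}\otimes s_k$ and $s_k$ for an appropriate $k$.

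If $i\leq j$, I would set $k=j+1$ and $\tau=E_i^\ast(\sigma)$. The identity $s_j\circ s_i=s_i\circ s_{j+1}$ is a standard simplicial relation and immediately handles the $\rho$-side, while a vertex-by-vertex computation of $E_j\circ E_i$ in the three regimes $r\leq i$, $i<r\leq j+1$, and $r>j+1$ establishes the equality $E_j\circ E_i=E_i\circ(\on{id}\otimes s_{j+1})$, giving the $\phi$-side factorization. If instead $i>j$, the composite $s_j\circ s_i$ already begins with $s_i$, so one may take $k=i$ and $\tau=E_j^\ast(\sigma)$; the analogous vertex computation with the regimes $r\leq j$, $j<r\leq i$, and $r>i$ yields $E_j\circ E_i=E_j\circ(\on{id}\otimes s_i)$.

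The only place to be careful is the first coordinate of $\Delta^1\widehat{\otimes}\Delta^{n+1}$: past the relevant threshold, $(E_j\circ E_i)(m,r)$ has first coordinate forced to be $1$ regardless of $m$, and one must check that this matches what happens to $(E_j\circ E_i)(m,r')$ for the vertex $r'$ identified with $r$ by the degeneracy. Once these equalities are verified, the resulting statements $E_i^\ast(E_j^\ast(\sigma))=s_{j+1}(E_i^\ast(\sigma))$ and $E_i^\ast(E_j^\ast(\sigma))=s_i(E_j^\ast(\sigma))$ manifestly display the simplex as degenerate; no issues arise from the marked-scaled decorations since $\on{id}\otimes s_k$ respects them by construction. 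The main obstacle is essentially bookkeeping: keeping the index shifts aligned correctly in both the $i\leq j$ and $i>j$ regimes.
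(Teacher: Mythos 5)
Your proposal is correct: the identities $E_i^\ast(E_j^\ast(\sigma))=s_{j+1}(E_i^\ast(\sigma))$ for $i\leq j$ and $E_i^\ast(E_j^\ast(\sigma))=s_i(E_j^\ast(\sigma))$ for $i>j$ do hold, as one checks vertexwise on $\Delta^1\times\Delta^{n+1}$ exactly as you describe, and degeneracy only concerns the underlying simplicial set so the decorations are indeed irrelevant. The paper offers no argument beyond asserting that the lemma follows immediately from \autoref{constr:extensions}, and your computation is precisely the verification being elided, so this is essentially the same (direct unwinding) approach.
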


\begin{theorem}\label{thm:thebigfreeboi}
  Let $\func{p:\bcat{X}\to \bcat{D}}$ be a functor of $\infty$-bicategories.  Then the morphism 
  \[
  \func{\gamma_{\bcat{X}}: \bcat{X}^\natural \to \FF(\bcat{X})^{\natural}}
  \]
  is \bS-anodyne over $\bcat{D}$.
\end{theorem}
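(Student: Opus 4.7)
My proof plan is to realize $\gamma_{\bcat{X}}$ as a (transfinite) composition of pushouts along generating \bS-anodyne morphisms, using the extensions $E_j^\ast$ from Construction above as the attaching data. Call a non-degenerate $n$-simplex $\sigma$ of $\mathbb{F}(\bcat{X})^\natural$ \emph{free} if its defining map $\phi^\sigma:\Delta^1\widehat{\otimes}\Delta^n\to \bcat{D}$ does not factor through the collapse $\Delta^1\widehat{\otimes}\Delta^n\to \Delta^n$, i.e.\ $\sigma$ is not in the image of $\gamma_{\bcat{X}}$. Define a filtration $\bcat{X}^\natural = F^{(-1)}\subseteq F^{(0)}\subseteq F^{(1)}\subseteq\cdots$ where $F^{(n)}$ is the \textbf{MB} sub-simplicial set of $\mathbb{F}(\bcat{X})^\natural$ generated by $\bcat{X}^\natural$ together with all free simplices of dimension at most $n$. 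Since $\bigcup_n F^{(n)} = \mathbb{F}(\bcat{X})^\natural$, it suffices to show each inclusion $F^{(n-1)}\hookrightarrow F^{(n)}$ lies in the weakly saturated closure of \bS.

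For the inductive step, fix a free $n$-simplex $\sigma$ and consider the $(n+1)$-simplex $E_n^\ast(\sigma)$. By \autoref{lem:facesofextensions} its face $d_{n+1}$ equals $\sigma$, its faces $d_s$ for $s<n$ are of the form $E_{n-1}^\ast(d_s\sigma)$, and its face $d_n$ coincides with $d_n(E_{n-1}^\ast(\sigma))$. I would attach the entire family of extensions $E_j^\ast(\sigma)$ for $j=0,\dots, n$ in the prescribed order $j=n,n-1,\dots,0$, so that at each step the ``middle'' shared face $d_{j+1}(E_j^\ast)=d_{j+1}(E_{j+1}^\ast)$ and $d_j(E_j^\ast)=d_j(E_{j-1}^\ast)$ have already been attached. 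The attachment of $E_n^\ast(\sigma)$ itself is a pushout along an outer horn $\Lambda^{n+1}_{n+1}\hookrightarrow \Delta^{n+1}$: by the definition of $\Delta^1\widehat{\otimes}\Delta^n$, the edge $\Delta^{\{n,n+1\}}$ of $E_n^\ast(\sigma)$ is marked (its $\{1\}$-component is the degeneracy $s_n$ in $\bcat{X}$, hence an equivalence) and the triangle $\Delta^{\{0,n,n+1\}}$ is thin (from the third bullet of the definition of $\Delta^1\widehat{\otimes}X$), so this is precisely an instance of the generator \ref{mb:2Cartesianmorphs}. The attachments of $E_j^\ast(\sigma)$ for $0<j<n$ are inner-horn pushouts of type \ref{mb:innerhorn} with appropriately scaled $\{j-1,j,j+1\}$ triangle, while the attachment of $E_0^\ast(\sigma)$ is a left-degenerate pushout of type \ref{mb:leftdeglefthorn} because $d_0(E_0^\ast(\sigma))=\gamma_{\bcat{X}}(d_0\ell_0^\sigma)$ lies in $\bcat{X}^\natural$ via a map factoring through a vertex. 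Lemmas \ref{lem:degofextensions} and \ref{lem:extensionofextension} ensure that iterated extensions are degenerate, so no higher-order bookkeeping is needed. The base case $n=0$ is handled by generator \ref{mb:2CartliftsExist}: each free vertex $\sigma$ is attached through the marked edge $E_0^\ast(\sigma)$ from $\sigma$ to $\gamma_{\bcat{X}}(d_0\ell_0^\sigma)\in \bcat{X}^\natural$.

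The principal obstacle I anticipate is the sub-filtration within a fixed dimension $n$. The faces $d_s(E_n^\ast(\sigma))=E_{n-1}^\ast(d_s\sigma)$ for $s<n$ are themselves $n$-simplices that may be free, so they must be adjoined to $F^{(n-1)}$ \emph{before} $\sigma$. This requires refining the filtration $F^{(n-1)}\subseteq F^{(n)}$ by a rank function that orders free $n$-simplices by their ``degree of degeneracy along the first $\Delta^1$-factor'' — roughly, the minimum number of non-degenerate jumps in $\phi^\sigma$ restricted to $\Delta^1\otimes \Delta^{\{i,i+1\}}$ — so that extensions appear strictly earlier than the simplices whose attachment they support. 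A careful recursive argument using \autoref{lem:facesofextensions} then shows that the horn to be filled at each stage has all its faces but $\sigma$ (or, at the outer $j=0$ case, a $\gamma_{\bcat{X}}$-image) already present.

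A secondary subtlety is the scaling and marking bookkeeping: the thin-triangle structure of $\Delta^1\widehat{\otimes}\Delta^n$ has three disjoint sources (genuine Gray-thin triangles, lean triangles whose top edge is $s_j$-degenerate, and lean triangles whose bottom edge is marked followed by a degeneracy), and at each inductive stage one must check that the horn being filled has scalings matching exactly the relevant generator of \bS. Similarly the marking on $\mathbb{F}(\bcat{X})^\natural$ inherited from the pullback must be identified with the edges whose $\{1\}$-component is an equivalence in $\bcat{X}$; this is necessary to justify using \ref{mb:2Cartesianmorphs} and \ref{mb:2CartliftsExist}. With this bookkeeping established, the entire filtration $F^{(n-1)}\hookrightarrow F^{(n)}$ is exhibited as a composition of pushouts of generators \ref{mb:innerhorn}, \ref{mb:leftdeglefthorn}, \ref{mb:2Cartesianmorphs}, and \ref{mb:2CartliftsExist}, yielding the result.
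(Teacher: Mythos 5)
Your overall architecture is the same as the paper's (filter $\FF(\bcat{X})^\natural$ by simplex dimension and attach each new simplex together with its extensions $E_j^\ast$ via horn fillings governed by \autoref{lem:facesofextensions}), but the order in which you attach the extensions is backwards, and with it the key step fails. If you attach $E_n^\ast(\sigma)$ first along $\Lambda^{n+1}_{n+1}$, the horn must already contain the face $d_n(E_n^\ast(\sigma))=d_n(E_{n-1}^\ast(\sigma))$, which is a face of the not-yet-attached extension $E_{n-1}^\ast(\sigma)$ and is in general a brand-new free $n$-simplex; your claim that both shared faces $d_{j+1}(E_j^\ast)=d_{j+1}(E_{j+1}^\ast)$ and $d_j(E_j^\ast)=d_j(E_{j-1}^\ast)$ are already present is false for the decreasing order (only the first one is). Symmetrically, at your last step all faces of $E_0^\ast(\sigma)$ would already be present, so adjoining $E_0^\ast(\sigma)$ would amount to a pushout along $\partial\Delta^{n+1}\subset\Delta^{n+1}$, which is not \bS-anodyne; and the generator \ref{mb:leftdeglefthorn} you invoke does not apply, since the edge $\Delta^{\{0,1\}}$ of $E_0^\ast(\sigma)$ is not collapsed. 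The workable order is the increasing one: attach $E_0^\ast(\rho)$ via the inner horn $\Lambda^{n+1}_1$ (type \ref{mb:innerhorn}, using that $\Delta^{\{0,1,2\}}$ is thin), then $E_1^\ast(\rho)$ via $\Lambda^{n+1}_2$, and so on, each step creating exactly the face $d_{j+1}(E_j^\ast)=d_{j+1}(E_{j+1}^\ast)$ needed at the next step; only the final step is the outer horn $\Lambda^{n+1}_{n+1}$ of type \ref{mb:2Cartesianmorphs} (marked last edge, lean triangle $\Delta^{\{0,n,n+1\}}$), and it is this step that adds $\rho$ itself.

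Two further gaps. First, your stages $F^{(n)}$ do not contain the extensions of the free simplices of dimension $\leq n$, yet the faces $d_s(E_j^\ast(\rho))$ for $s\notin\{j,j+1\}$ are exactly extensions $E_j^\ast(d_{s-1}\rho)$ or $E_{j-1}^\ast(d_s\rho)$ of $(n-1)$-simplices; the induction hypothesis as you state it does not make them available, and your proposed rank-function refinement (which would have to do this essential work) is only sketched. Note also that such an extension is itself a free $n$-simplex all of whose own extensions are degenerate by \autoref{lem:extensionofextension}, so it cannot be attached by your mechanism and must arise as a face created during an earlier horn filling --- this is precisely the dependency the ordering has to resolve, and it is why the paper builds the extensions of $(n-1)$-simplices directly into $Z_{n-1}$. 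Second, you do not treat the coincidence cases that occupy most of the paper's argument: when some face $d_\alpha(E_\alpha^\ast(\rho))$ already lies in the previously constructed subcomplex, or when $\rho$ itself is already present as a face of an extension of an earlier simplex, the sequence of horn fillings must be truncated or skipped entirely, and one must verify (using identities such as $E_k^\ast(\rho)=E_k^\ast(d_\alpha(E_\alpha^\ast(\rho)))$ for $k<\alpha$) that the remaining extensions are already present; without this the claimed pushout description of each stage is not established. Finally, beyond checking that each horn carries the decorations of the relevant generator, one must still show that the remaining markings and scalings of $\FF(\bcat{X})^\natural$ can be added in a \bS-anodyne way, as the paper does by appeal to \autoref{cor:decormarked}.
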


\begin{proof}
  Let us start by defining $Z_0$ to be the subsimplicial set of $\FF(\bcat{X})$ consisting of all of the simplices belonging to $\bcat{X}$, all of the $0$-simplices of $\FF(\bcat{X})$ and all of the possible $j$-extensions of the $0$-simplices. We extend this definition inductively by defining $Z_n$ to consist of all of the simplices of $Z_{n-1}$, all of the $n$-simplices of $\FF(\bcat{X})$ and the $(n+1)$-simplices appearing as extensions of $n$-simplices. 
  We set the convention $Z_{-1}=X$ and we fix the notation
  \[
  \func{\gamma_{n-1}:Z_{n-1} \to Z_{n}.}
  \]
  Observe that since \bS-anodyne maps are stable under transfinite composition it will suffice to show that each $\gamma_{n-1}$ is \bS-anodyne.
  
  We start by analyzing $\gamma_{-1}$. Observe that given an object $\sigma:\Delta^0 \to \FF(\bcat{X})$ we can consider the Cartesian edge $E_0^\ast(\sigma):\Delta^1 \to \FF(\bcat{X})$. Since the target of $E_0^\ast(\sigma)$ is already contained in $\bcat{X}$ it follows that we can add $\sigma$ by means of a pushout along a \bS-anodyne map. Repeating this process for each object in $\FF(\bcat{X})$ conclude that  $\gamma_{-1}$ is \bS-anodyne.

  Now we will tackle the general case for $\gamma_{n-1}$ with $n\geq 1$. Let us pick an order on the set of non-degenerate $n$-simplices of $\FF(\bcat{X})$ that are not already contained in $Z_{n-1}$. For every $\sigma \in \FF(\bcat{X})_{n}$ we define $Z_{n-1}(\sigma)$ as the subsimplicial subset of $Z_{n}$ containing all of the simplices of $Z_{n-1}$ in addition to the $n$-simplices $\theta \leq \sigma$ and its corresponding extensions. Let $\on{suc}(\sigma)$ be the successor of $\sigma$ in our chosen order. We will adopt the convention $Z_{n-1}(\emptyset)=Z_{n-1}$ and $\on{suc}(\emptyset)=\sigma_0$ is the first element in our ordering. To show that $\gamma_{n-1}$ is \bS-anodyne it will suffice to prove that 
  \[
  \func{Z_{n-1}(\sigma) \to Z_{n-1}(\rho)}
  \]
  is \bS-anodyne where $\rho=\on{suc}(\sigma)$.
  
  The proof will be divided into three cases. First let us assume that for every $1\leq j \leq n$ all of the faces of $E_j^\ast(\rho)$ are contained in $Z_{n-1}(\sigma)$ except the faces missing $j+1,j$. Applying \autoref{lem:facesofextensions} for $j=0$ yields 
  \[
  d_s E_0^\ast(\rho)=\begin{cases}
  E_0^\ast\left( d_s(\rho) \right), \enspace \text{ if } 1 < s \leq n+1 \\
  d_{1}\left(E_1^\ast(\rho) \right), \enspace \text{ if } s=1\\
  \gamma_X(d_0(\ell_0)), \enspace \text{ if } s=0.
  \end{cases}
  \]
  which shows that all of the faces of $E_0^\ast(\rho)$ are already in $Z_{n-1}(\sigma)$ except the $1$-face. By construction the triangle $\Delta^{\set{0,1,2}}$ is thin in $E_0^\ast(\rho)$, which shows that we can add the simplex $E_0^\ast(\rho)$ via a pushout along a \bS-anodyne map. Let us denote by $V_0$ the resulting simplicial set $Z_{n-1}(\sigma) \to V_0 \to Z_{n-1}(\rho)$. Using \autoref{lem:facesofextensions} again, we see that all of the faces of $E_1^\ast(\rho)$ are in $V_0$ except the $2$-face. A similar argument as above shows that we can add $E_1^\ast(\rho)$ in a \bS-anodyne way and thus obtaining a new subsimplicial set that we denote $V_1$. We can repeat this process until we reach $V_{n-1}$. In our final step we observe that we have a pullback diagram
  \[
  \begin{tikzcd}[ampersand replacement=\&]
  \Lambda^{n+1}_{n+1} \arrow[r] \arrow[d] \& \Delta^{n+1} \arrow[d,"E_n^\ast(\rho)"] \\
  V_{n-1} \arrow[r] \& Z_{n-1}(\rho)
  \end{tikzcd}
  \]
  where the last edge of $\Lambda^{n+1}_{n+1}$ is 2-Cartesian in $\FF(\bcat{X})$ and the triangle $\Delta^{\set{0,n,n+1}}$ is coCartesian. Therefore we can add $E_n^\ast(\rho)$ using a pushout along a \bS-anodyne map and conclude that $Z_{n-1}(\sigma) \to Z_{n-1}(\rho)$ is in this case \bS-anodyne.
  
  For the second case let us suppose that there exists some $1 \leq \alpha \leq n$ such that $d_{\alpha}\left( E_{\alpha}^\ast(\rho)\right)$ is already in $Z_{n-1}(\sigma)$ and that for every $j>\alpha$ we have that the faces missing $j+1$, $j$ in $E_j^\ast(\rho)$ are not contained in $Z_{n-1}(\sigma)$. We claim that for every $0\leq k < \alpha$ the simplex $E_k^\ast(\rho) \in Z_{n-1}(\rho)$. One can easily check that
  \[
  E_k^\ast(\rho)= E_k^\ast(d_{\alpha}\left(E_{\alpha}^\ast(\rho)\right)), \enspace 0\leq k < \alpha.
  \]
  In particular, this shows that it will suffice to show that all of the extensions of $d_{\alpha}\left( E_{\alpha}^\ast(\rho)\right)$ are contained in $Z_{n-1}(\sigma)$. To provide a proof of this latter claim we observe that $d_{\alpha}\left( E_{\alpha}^\ast(\rho)\right)\in Z_{n-1}(\sigma)$ if and only at least one of the following conditions hold:
  \begin{itemize}
    \item[*)] The face  $d_{\alpha}\left( E_{\alpha}^\ast(\rho)\right)$ is contained in $\bcat{X}$.
    \item[i)] The face $d_{\alpha}\left( E_{\alpha}^\ast(\rho)\right)$ is degenerate.
    \item[ii)] The face $d_{\alpha}\left( E_{\alpha}^\ast(\rho)\right)$ is the extension of an $(n-1)$-simplex.
    \item[iii)] There exists $\theta \leq \sigma$,  such that $d_{\alpha}\left( E_{\alpha}^\ast(\rho)\right)$ is a face of an extension of $\theta$
  \end{itemize}
  If condition *) holds then it is easy to see that all of the possible extensions are already in $Z_{n-1}(\sigma)$. Using \autoref{lem:degofextensions} and \autoref{lem:extensionofextension} we see that the claim holds if the conditions i) or ii) are satisfied. Suppose now that condition iii) holds. We can assume without loss of generality that $d_{\alpha}(E_{\alpha}^\ast(\rho))=d_{\beta}(E_{\beta}^\ast(\theta))$. A straightforward computation shows that
  \[
  E_j^\ast(d_{\beta}(E_\beta^\ast(\theta)))=\begin{cases}
  s_j(d_{\beta}(E_\beta^\ast(\theta))), \enspace \text{ if }j \geq \beta \\
  E_j^\ast(\theta), \enspace \text{ if } j < \beta
  \end{cases} 
  \]
  so again, the claim holds. We have shown $Z_{n-1}(\sigma)=V_{\alpha-1}$ and thus the previous argument runs exactly the same way.
  
  The last case to analyze is the degenerate situation where $\rho$ is already in $Z_{n-1}(\sigma)$. In this case we need to show that $Z_{n-1}(\sigma)=Z_{n-1}(\rho)$, i.e. we need to show that we already have all of the extensions of $\rho$. Since $\rho \notin Z_{n-1}$ and it is not degenerate it follows that $\rho=d_{\beta}(E_{\beta}(\theta))$ for some $\theta \leq \sigma$. Using the same reasoning as before we can see that $E_k(\rho) \in Z_{n-1}(\sigma)$ for all $0\leq k \leq n$ and the claim follows.

  In order to finish the proof there is one last thing we have to take care of in the filtration, namely, the decorations. We need to show that whenever we add a marked edge (resp. lean, resp. thin triangle) in our filtration we can add the decoration to our filtration in a \bS-anodyne way. For the marked edges this essentially an specific case of the proof given in \autoref{cor:decormarked}. We leave the rest of the decorations as an exercise for the reader.
\end{proof}

\begin{remark}
  The morphism $\gamma_{\bcat{X}}: \bcat{X}^\natural \to \mathbb{F}(\bcat{X})^{\natural}$ we can viewed as the unit of a bicategorical free-forgetful adjunction between the $\infty$-bicategory of of $\infty$-bicategories over $\bcat{D}$ and the $\infty$-bicategory of 2-Cartesian fibrations over $\bcat{D}$. We will not pursue this direction further in this document. A detailed study of bicategorical adjunctions is part of the research program of the authors and will appear in future work.
\end{remark}

\begin{definition}\label{def:freemarking}
  Let $\func{p:\bcat{X} \to \bcat{D}}$ be a functor of $\infty$-bicategories. Assume that $\bcat{X}$ comes equipped with a marking containing all of the equivalences and denote the resulting marked $\infty$-bicategory by $\bcat{X}^{\dagger}$.  We define new marking on $\FF(\bcat{X})$  as follows. We declare and edge represented by $\Delta^1 \otimes \Delta^1 \to \bcat{D}$ marked if and only if it factors through $\Delta^1 \times \Delta^1$ and its restriction to $\Delta^{\set{1}}\times \Delta^{1}$ factors through a marked edge in $\bcat{X}$. We define marked-scaled simplicial $\FF(\bcat{X})^{\dagger}$ having the same lean and thin triangles as $\mathbb{F}(\bcat{X})^{\natural}$ but equipped with this new collection of marked edges.
\end{definition}

\begin{corollary}\label{cor:decormarked}
  Let $\func{p:\bcat{X} \to \bcat{D}}$ be a functor of $\infty$-bicategories. Assume that $X$ comes equipped with a marking (containing the equivalences) and denote the corresponding marked $\infty$-bicategory by $\bcat{X}^{\dagger}$. Then the morphism
  \[
  \func{ \bcat{X}^{\dagger} \to \FF(\bcat{X})^{\dagger}}
  \]
  is \bS-anodyne.
\end{corollary}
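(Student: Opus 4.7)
The plan is to enhance the filtration of Theorem~\ref{thm:thebigfreeboi} to also track and add the additional marked edges at each stage. Recall that the extra marked edges of $\FF(\bcat{X})^{\dagger}$ beyond the equivalences are precisely the 1-simplices of $\FF(\bcat{X})$ represented by pairs $(\phi,\rho)$ where $\phi\colon \Delta^1\otimes\Delta^1\to \bcat{D}$ factors through $\Delta^1\times \Delta^1$ and $\rho\colon \Delta^1\to \bcat{X}$ is marked in $\bcat{X}^{\dagger}$.

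The first observation is that whenever $\rho$ is marked in $\bcat{X}^{\dagger}$, the 1-simplex $\gamma_{\bcat{X}}(\rho)$ is already marked in $\FF(\bcat{X})^{\dagger}$, since its restriction to $\{1\}\times \Delta^1$ is exactly $\rho$ and $\phi$ factors through $\Delta^0\times\Delta^1$. Consequently, the markings coming from edges of $\bcat{X}^{\dagger}$ are carried along by the cofibration $\gamma_{\bcat{X}}$ without further work. This means we only have to worry about marked edges $e=(\phi,\rho)\in \FF(\bcat{X})$ in which $\phi$ is non-degenerate in the first coordinate.

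For such an $e$, I would build an auxiliary 2-simplex $\Sigma_e\colon \Delta^2\to \FF(\bcat{X})$ with $d_0(\Sigma_e)=\gamma_{\bcat{X}}(\rho)$, $d_2(\Sigma_e)$ a $p$-Cartesian lift of the appropriate edge in $\bcat{D}$ starting at the source of $e$, and $d_1(\Sigma_e)=e$. Explicitly, this 2-simplex corresponds to a map $\Delta^1\widehat\otimes \Delta^2\to \bcat{D}$ obtained by padding $\phi$ with a degeneracy, together with $s_0\rho\colon \Delta^2\to \bcat{X}$ on the $\{1\}\times\Delta^2$ face. The face $d_2(\Sigma_e)$ is exactly of the form $E_0^{\ast}(\text{source of }e)$, hence gets added and marked by the (A4)/(A5) pushouts already needed in the proof of Theorem~\ref{thm:thebigfreeboi}. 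One checks from the definition of the scaling on $\Delta^1\widehat\otimes\Delta^2$ and the way $\phi$ is constructed that $\Sigma_e$ is thin in $\FF(\bcat{X})^{\dagger}$, so axiom (S1) (marked morphisms compose across thin triangles) then forces $e$ to be marked.

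The main technical obstacle will be keeping the order of additions coherent: when we reach $e$ in our filtration, we need the auxiliary 2-simplex $\Sigma_e$ (and in particular its Cartesian face and the thinness decoration) to already be present. Thus I would interleave the filtration of Theorem~\ref{thm:thebigfreeboi} with a secondary ordering: before adding the marking on $e$, one first ensures that $\Sigma_e$ lies in the current stage. Since $\Sigma_e$ is built out of data already governed by the extensions $E_j^{\ast}$ of Construction~\ref{constr:extensions}, this can be done by a straightforward refinement of the transfinite composition, with each additional step either a pushout along a generator from \autoref{def:mbsanodyne} or an instance of the arguments already used in the theorem. This addresses the case deferred in the final paragraph of the proof of Theorem~\ref{thm:thebigfreeboi}, and the remaining decorations (lean/thin triangles of the appropriate type) are handled exactly as there.
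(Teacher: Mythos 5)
Your overall strategy (first transport the markings coming from $\bcat{X}^{\dagger}$ along $\gamma_{\bcat{X}}$, which the paper does via a pushout producing an intermediate object, and then mark the remaining edges of $\FF(\bcat{X})^{\dagger}$ one at a time using extension simplices and the composition axioms) is the same as the paper's. However, the key step is flawed: the auxiliary $2$-simplex $\Sigma_e$ you describe does not exist. If $d_1(\Sigma_e)=e$ and $d_0(\Sigma_e)=\gamma_{\bcat{X}}(\rho)$, the simplicial identities force the second vertex of $\Sigma_e$ to be simultaneously the target of $e$ and the target of $\gamma_{\bcat{X}}(\rho)$. The target of $\gamma_{\bcat{X}}(\rho)$ is $(\on{id}_{p(x_1)},x_1)$, i.e.\ it lies in the image of $\gamma_{\bcat{X}}$, whereas the target of $e$ is $(u_1\colon d_1\to p(x_1),\,x_1)$ with $u_1$ in general non-degenerate. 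Indeed, the simplex you actually construct by ``padding $\phi$ with a degeneracy together with $s_0\rho$'' is exactly $E_0^{\ast}(e)$, and its three faces are $\gamma_{\bcat{X}}(\rho)$, the Cartesian edge $E_0^{\ast}(\text{source of }e)$, and a \emph{new} edge $e'=d_1(E_0^{\ast}(e))$ with target $\gamma_{\bcat{X}}(x_1)$; the edge $e$ itself is not a face of it. So the one-step application of \ref{mb:composeacrossthin} can only mark $e'$, not $e$.

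To reach $e$ one needs a second extension: by \autoref{lem:facesofextensions}, $E_1^{\ast}(e)$ has faces $d_0=E_0^{\ast}(\text{target of }e)$ (Cartesian, hence marked), $d_1=e'$ (marked by the previous step), and $d_2=e$. But now the edge to be marked sits in the $d_2$ position, and axiom \ref{mb:composeacrossthin} only marks the long edge $d_1$ of a thin triangle whose faces $d_0,d_2$ are marked; it cannot conclude that $d_2$ is marked from $d_0$ and $d_1$. The paper closes this gap by invoking a cancellation property of marked edges across thin triangles (\cite[Lem.\ 3.7]{AGS_CartI}) applied to $E_1^{\ast}(e)$. So your proposal is missing both the second extension simplex and this cancellation lemma; as written, the argument marks only the auxiliary edges $d_1(E_j^{\ast}(e))$, never $e$ itself. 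The interleaving of these steps into the filtration of \autoref{thm:thebigfreeboi} is also unnecessary: since all the simplices $E_0^{\ast}(e)$, $E_1^{\ast}(e)$ and their Cartesian faces are already present in $\FF(\bcat{X})$, the marking can be added after the fact by pushouts, which is how the paper proceeds.
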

\begin{proof}
  Let us consider the pushout diagram
  \[
  \begin{tikzcd}
  \bcat{X} \arrow[r] \arrow[d] & \bcat{X}^{\dagger} \arrow[d] \\
  \FF(\bcat{X})^\natural \arrow[r] & \FF(\bcat{X})^{\diamond}
  \end{tikzcd}
  \]
  where it follows from \autoref{thm:thebigfreeboi} that the left-most vertical map is \bS-anodyne. To finish the proof we just need to show that the morphism $\FF(\bcat{X})^{\diamond} \to \FF(\bcat{X})^{\dagger}$ is again anodyne.  Let $e: \Delta^1 \otimes \Delta^1 \to \bcat{D}$ be a marked edge of $\FF(\bcat{X})^{\dagger}$. First we observe that $E_0^\ast(e)$ is a thin 2-simplex such that $d_0(E_0^\ast(e))$ and $d_2(E_0^\ast(e))$ are marked in $\FF(\bcat{X})^{\diamond}$. It particular it follows that we can marked the edge $d_1(E_0^\ast(e))$ using a pushout along a morphism of type \ref{mb:composeacrossthin}.  Using a pushout along a morphism of the type described in \cite[Lem. 3.7]{AGS_CartI} we can mark all edges in $E_1^\ast(e)$. We conclude the proof after noting that $d_2(E_1^\ast(e))=e$.
\end{proof}

\subsection{Partially lax colimits and cofinality}\label{subsec:cofcrit}

We now turn to our main result of this section, a criterion for higher cofinality. We will not here recapitulate the theory of higher (co)limits expounded in \cite{GHL_LaxLim}, but see \autoref{rmk:GHL_colim_recap} for details on the connection with $(\infty,2)$-categorical colimits. 

\begin{definition}\label{defn:markedcofinal}
  Let $X^{\dagger},Y^{\dagger}$ be a pair of marked-scaled simplicial sets and consider a marking preserving functor, $f:X^\dagger \to Y^\dagger$. We say that $f$ is a marked cofinal functor (or simply cofinal) if the associated functor of marked-biscaled simplicial sets 
  \[
  \func{f:(X,E_X,T_X \subset \sharp) \to (Y,E_Y,T_Y \subset \sharp) }
  \]
  is a weak equivalence in model structure of $\bS$ simplicial sets over $Y$.
\end{definition}

\begin{remark}\label{rmk:GHL_colim_recap}
  The theory of partially lax colimit in $\infty$-bicategories was independently developed by Berman in \cite{Berman}, the present author in \cite{AGSRelNerve} and \cite{AG_cof}, and Gagna, Harpaz, and Lanari in \cite{GHL_LaxLim}. The latter provides a full characterization of marked (co)limits in $\infty$-bicategories, including the four variances which arise from changing the directions of 2-morphisms in the corresponding notion of cone. The theory of partially lax colimit described in \cite{AG_cof} corresponds to the case of \emph{outer colimits} in the language of \cite{GHL_LaxLim}. 
  
  By \cite[Thm 5.4.4]{GHL_LaxLim}, a functor of marked $\infty$-bicategories $f:\bcat{C}^\dagger\to \bcat{D}^\dagger$ is \emph{outer cofinal} --- i.e., pullback along $f$ preserves outer colimits --- if and only if $f$ is marked cofinal in the sense of \autoref{defn:markedcofinal} above (compare \cite[Defn 4.3.3]{GHL_LaxLim} to \cite[Defn 3.25]{AGS_CartI} and  \cite[Prop. 3.28]{AGS_CartI} to see that these conditions do indeed coincide).
\end{remark}

\begin{remark}
  Let $f:(\bcat{C},E_{\bcat{C}},T_{\bcat{C}}) \to (\bcat{D},E_{\bcat{D}},T_{\bcat{D}})$ be a functor of marked $\infty$-bicategories. Observe that in order to see if $f$ is cofinal we can assume that the markings of both $\infty$-bicategories contain all equivalences. Indeed, this follows easily after taking pushouts along morphisms of type \ref{mb:equivalences}. Consequently for the rest of the section we will assume that the markings satisfy this property.
\end{remark}

\begin{lemma}\label{lem:IffII}
  Let $f:\bcat{C}^{\dagger} \to \bcat{D}^{\dagger}$ be a functor of marked $\infty$-bicategories and recall from \autoref{def:freemarking} the associated marking on $\mathbb{F}(\bcat{C})^{\dagger}=(\mathbb{F}(\bcat{C}),E_{\mathbb{F}(\bcat{C})^\dagger},T_{\mathbb{F}(\bcat{C})^\dagger})$. Then the induced morphism
  \[
  \func{(\bcat{C},E_{\bcat{C}^\dagger},T_{\bcat{C}^\dagger}\subset \sharp) \to (\mathbb{F}(\bcat{C}),E_{\mathbb{F}(\bcat{C})^\dagger},T_{\mathbb{F}(\bcat{C})^\dagger}\subset \sharp)}
  \] 
  is \bS-anodyne.
\end{lemma}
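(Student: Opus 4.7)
The strategy closely parallels that of \autoref{cor:decormarked}, with the role of the marking decoration now played by the lean scaling: first produce the desired morphism via a pushout of the \bS-anodyne map $\bcat{C}^{\dagger}\to \mathbb{F}(\bcat{C})^{\dagger}$, then promote the remaining decoration by hand.

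\emph{Step 1 (Pushout reduction).} By \autoref{cor:decormarked}, the morphism $\gamma:\bcat{C}^{\dagger}\to \mathbb{F}(\bcat{C})^{\dagger}$ is \bS-anodyne. I form the pushout in $\mbsSet_{/\bcat{D}}$
\[
\begin{tikzcd}
\bcat{C}^{\dagger} \arrow[r,"\gamma"] \arrow[d] & \mathbb{F}(\bcat{C})^{\dagger} \arrow[d] \\
(\bcat{C}, E_{\bcat{C}^{\dagger}}, T_{\bcat{C}^{\dagger}} \subset \sharp) \arrow[r] & \mathbb{F}(\bcat{C})^{\diamond}
\end{tikzcd}
\]
where the left vertical is the identity on the underlying simplicial set, markings, and thin triangles, and upgrades the lean scaling on $\bcat{C}$ to $\sharp$. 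Since \bS-anodyne morphisms are closed under pushout, the lower horizontal map is \bS-anodyne. The pushout $\mathbb{F}(\bcat{C})^{\diamond}$ has the same underlying simplicial set, markings, and thin triangles as the target $(\mathbb{F}(\bcat{C}), E_{\mathbb{F}(\bcat{C})^{\dagger}}, T_{\mathbb{F}(\bcat{C})^{\dagger}} \subset \sharp)$, and differs from it only in its lean scaling: in $\mathbb{F}(\bcat{C})^{\diamond}$, a triangle is lean precisely when it is either lean in $\mathbb{F}(\bcat{C})^{\dagger}$ or lies in the subcomplex $\bcat{C}\subset \mathbb{F}(\bcat{C})$ coming from $\gamma_{\bcat{C}}$.

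\emph{Step 2 (Promoting the lean scaling).} It remains to verify that the canonical map
\[
\mathbb{F}(\bcat{C})^{\diamond}\to (\mathbb{F}(\bcat{C}),E_{\mathbb{F}(\bcat{C})^{\dagger}}, T_{\mathbb{F}(\bcat{C})^{\dagger}}\subset \sharp)
\]
is \bS-anodyne. This is the identity on all data except that in the target additional triangles of $\mathbb{F}(\bcat{C})$ are declared lean. For each such ``missing'' lean triangle $\tau:\Delta^2\to \mathbb{F}(\bcat{C})$, I exhibit it as a face of a suitable 3-simplex in $\mathbb{F}(\bcat{C})$ satisfying the input conditions of one of the anodyne generators \ref{mb:innersaturation}, \ref{mb:dualcocart2of3}, or \ref{mb:coCart2of3}. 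The 3-simplex is constructed using the extension operations of \autoref{constr:extensions}: each $E_j^{\ast}(\tau)$ carries a canonically Cartesian edge $\Delta^{\{j+1,j+2\}}$ (hence marked in $\mathbb{F}(\bcat{C})^{\diamond}$), and those faces of $E_j^{\ast}(\tau)$ whose top projection factors through a degeneracy are automatically thin in $\bcat{D}$, hence lean in $\mathbb{F}(\bcat{C})^{\diamond}$ by the criterion of \autoref{prop:freefib2Cart}.

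\emph{Main obstacle.} The substantive content lies in Step 2. Not every face of $E_j^{\ast}(\tau)$ is already lean: certain faces have top projection equal to $\rho^{\tau}$ composed with a non-degeneracy, and their lean-ness in $\mathbb{F}(\bcat{C})^{\diamond}$ would require $\tau$ itself to be lean --- precisely what we are trying to establish. One therefore cannot upgrade triangles in isolation, but must organize a transfinite filtration that upgrades clusters of interrelated triangles simultaneously, mimicking both the dimension-indexed filtration in the proof of \autoref{thm:thebigfreeboi} and the explicit marking upgrade at the end of the proof of \autoref{cor:decormarked}. Verifying that this filtration proceeds in an \bS-anodyne fashion is the central combinatorial task of the lemma, and is of the flavour of the decoration analysis postponed to the reader at the conclusion of the proof of \autoref{thm:thebigfreeboi}.
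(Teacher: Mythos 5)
Your Step 1 is exactly the paper's first move: push out the \bS-anodyne map $\bcat{C}^\dagger\to\mathbb{F}(\bcat{C})^\dagger$ of \autoref{cor:decormarked} along $\bcat{C}^\dagger\to(\bcat{C},E_{\bcat{C}^\dagger},T_{\bcat{C}^\dagger}\subset\sharp)$, thereby reducing the lemma to showing that $\mathbb{F}(\bcat{C})^\diamond\to(\mathbb{F}(\bcat{C}),E_{\mathbb{F}(\bcat{C})^\dagger},T_{\mathbb{F}(\bcat{C})^\dagger}\subset\sharp)$ is \bS-anodyne. The gap is Step 2: you never carry it out. You name the right tools (the extensions of \autoref{constr:extensions} and the generators \ref{mb:innersaturation}, \ref{mb:dualcocart2of3}, \ref{mb:coCart2of3}), but then declare the actual verification to be an unresolved ``obstacle'' requiring a transfinite filtration of ``clusters of interrelated triangles'' that you do not construct. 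Since promoting the lean scaling is the entire content of the lemma beyond the formal pushout, what you have written is a plan rather than a proof.

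Moreover, the obstacle you describe is not actually there, and seeing this is what makes the paper's argument short. Fix a single triangle $\sigma$ of $\mathbb{F}(\bcat{C})^\diamond$ and process its extensions in the order $E_0^\ast(\sigma)$, $E_1^\ast(\sigma)$, $E_2^\ast(\sigma)$. In $E_0^\ast(\sigma)$ the faces $d_2$ and $d_3$ have degenerate projection to $\bcat{C}$ and so are already lean in $\mathbb{F}(\bcat{C})^\natural$, while $d_0(E_0^\ast(\sigma))$ lies in the image of $\gamma_{\bcat{C}}$ and is therefore lean in $\mathbb{F}(\bcat{C})^\diamond$ --- this is precisely where Step 1 gets used. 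The face whose top is $\rho^\sigma$, namely $d_1$, is not an input of the generator but its output: since $\Delta^{\{0,1,2\}}$ is thin, a pushout along \ref{mb:innersaturation} lean-scales $d_1(E_0^\ast(\sigma))$. Next, in $E_1^\ast(\sigma)$ the only face not yet lean is $d_2$ (by \autoref{lem:facesofextensions}, $d_1E_1^\ast(\sigma)=d_1E_0^\ast(\sigma)$ was just scaled, and the remaining faces are extensions of faces of $\sigma$, hence have degenerate tops), and it is scaled the same way; finally, in $E_2^\ast(\sigma)$ the only face not yet lean is $d_3(E_2^\ast(\sigma))=\sigma$ itself, which is scaled by a pushout along \ref{mb:coCart2of3}, since the edge $\Delta^{\{2,3\}}$ of $E_2^\ast(\sigma)$ is Cartesian. (Incidentally, the canonically marked edge of $E_j^\ast$ is $\Delta^{\{j,j+1\}}$, not $\Delta^{\{j+1,j+2\}}$.) No interaction between distinct triangles ever arises, so no cluster-wise filtration is needed: the two-out-of-three generators \emph{produce} the missing lean face rather than requiring it as a hypothesis. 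To repair your proposal you would need to supply exactly this three-step, per-triangle argument (or an equivalent one) in place of your ``main obstacle'' paragraph.
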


\begin{proof}
  Let us denote $\bcat{C}^{\dagger}_{\sharp}=(\bcat{C},E_{\bcat{C}^\dagger},T_{\bcat{C}^\dagger}\subseteq \sharp)$ and similarly $\mathbb{F}(\bcat{C})^\dagger_{\sharp}$. We consider a pushout diagram over $\bcat{D}$ 
  \[
  \begin{tikzcd}[ampersand replacement=\&]
  \bcat{C}^{\dagger} \arrow[d,"\simeq"] \arrow[r] \& \bcat{C}^{\dagger}_{\sharp} \arrow[d,"\simeq"] \\
  \mathbb{F}(\bcat{C})^{\dagger} \arrow[r] \& \mathbb{F}(\bcat{C})^{\dagger}_{\diamond} 
  \end{tikzcd}
  \]
  whose vertical morphisms are all weak equivalences. We will show that the induced map $s:\mathbb{F}(\bcat{C})^{\dagger}_{\diamond}  \to \mathbb{F}(\bcat{C})^{\dagger}_{\sharp}$ is anodyne. Let $\sigma: \Delta^2 \to \mathbb{F}(\bcat{C})^{\dagger}_{\diamond}$ be a triangle. Note that by construction $E_0^\ast(\sigma)$ is fully lean scaled. This implies that $E_0^\ast(\sigma)$ has all faces lean except possibly the face missing the vertex $1$. Since the triangle $\set{0,1,2}$ is thin we can take a pushout along an \bS-anodyne morphism to lean scaled the face missing $1$. 
  
  Now we consider $E_1^\ast(\sigma)$ and observe that the face missing $0$ and $3$ are lean. Additionally we see that $d_1(E_1^\ast(\sigma))=d_1(E_0^\ast(\sigma))$ so it follows that all faces are already lean except the face missing the vertex $2$. We scale the aforementioned face after noting that $\set{1,2,3}$ is a thin triangle. A similar argument then shows that all of the faces of $E_2^\ast(\sigma)$ are scaled except possible $d_3(E_2^\ast(\sigma))=\sigma$. However since the last vertex is marked and the triangle $\set{0,2,3}$ is scaled the result follows.
\end{proof}

\begin{definition}
  Let $f:\bcat{C}^{\dagger} \to \bcat{D}^{\dagger}$ be a functor of marked $\infty$-bicategories. Given $d \in \bcat{D}$ we denote by $\bcat{C}^{\dagger}_{d\upslash}$ the fibre over the object $d$ of the morphism $\mathbb{F}(\bcat{C})^\dagger \to \bcat{D}$.
\end{definition}

\begin{definition}\label{def:upcomafibrant}
  Let $f:\bcat{C}^{\dagger} \to \bcat{D}^{\dagger}$ be a functor of marked $\infty$-bicategories. We define the 2-Cartesian fibration $\bcat{C}^\dagger_{\bcat{D}\upslash} \to \bcat{D}$ to be a fibrant replacement of the object $(\mathbb{F}(\bcat{C}),E_{\mathbb{F}(\bcat{C})^\dagger},T_{\mathbb{F}(\bcat{C})^\dagger}\subset \sharp)$ in $(\mbsSet)_{/\bcat{D}}$. 
\end{definition}

\begin{proposition}\label{prop:prebigtheorem}
  Let $f:\bcat{C}^{\dagger} \to \bcat{D}^\dagger$ be a functor of marked $\infty$-bicategories. Then the following statements are equivalent:
  \begin{itemize}
    \item[i)] The map $f$ is marked cofinal.
    \item[ii)] The morphism $(\mathbb{F}(\bcat{C}),E_{\mathbb{F}(\bcat{C})^\dagger},T_{\mathbb{F}(\bcat{C})^\dagger}\subset \sharp) \to (\mathbb{F}(\bcat{D}),E_{\mathbb{F}(\bcat{D})^\dagger},T_{\mathbb{F}(\bcat{D})^\dagger}\subset \sharp)$ is a weak equivalence.
    \item[iii)] For every $d \in \bcat{D}$ we have an equivalence of $\infty$-categorical localizations $L_W(\bcat{C}_{d\upslash}^\dagger) \to L_W(\bcat{D}_{d\upslash}^\dagger)$.
  \end{itemize}
\end{proposition}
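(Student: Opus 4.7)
The plan is to prove (i) $\Leftrightarrow$ (ii) via a two-out-of-three argument, and then (ii) $\Leftrightarrow$ (iii) via the bicategorical Grothendieck construction combined with a fibre computation.

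First, for (i) $\Leftrightarrow$ (ii), I would place the morphism $f$ into the naturality square
\[
\begin{tikzcd}[ampersand replacement=\&]
(\bcat{C}, E_{\bcat{C}^\dagger}, T_{\bcat{C}^\dagger}\subset \sharp) \arrow[r,"f"] \arrow[d] \& (\bcat{D}, E_{\bcat{D}^\dagger}, T_{\bcat{D}^\dagger}\subset \sharp) \arrow[d] \\
(\mathbb{F}(\bcat{C}), E_{\mathbb{F}(\bcat{C})^\dagger}, T_{\mathbb{F}(\bcat{C})^\dagger}\subset \sharp) \arrow[r] \& (\mathbb{F}(\bcat{D}), E_{\mathbb{F}(\bcat{D})^\dagger}, T_{\mathbb{F}(\bcat{D})^\dagger}\subset \sharp)
\end{tikzcd}
\]
in $(\mbsSet)_{/\bcat{D}}$. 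Applying \autoref{lem:IffII} both to $f$ and to the identity of $\bcat{D}^\dagger$ shows that both vertical morphisms are \bS-anodyne, hence trivial cofibrations in the model structure of \autoref{thm:MBModelStructure}. The two-out-of-three property immediately yields (i) $\Leftrightarrow$ (ii).

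Next, for (ii) $\Leftrightarrow$ (iii), I would pass to fibrant replacements. Let $\bcat{C}^\dagger_{\bcat{D}\upslash}\to \bcat{D}$ and $\bcat{D}^\dagger_{\bcat{D}\upslash}\to \bcat{D}$ denote the 2-Cartesian fibrations of \autoref{def:upcomafibrant}; the morphism of (ii) is a weak equivalence if and only if the induced map between these fibrant replacements is. By the straightening--unstraightening Quillen equivalence of \cite[Thm 3.85]{AGS_CartII}, this is in turn equivalent to the induced natural transformation of the associated classifying functors $\bcat{D}^\op \to \mssSet$ being a pointwise weak equivalence in the projective model structure.

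The main obstacle, and the principal technical point, is identifying the classifying functor of $\bcat{C}^\dagger_{\bcat{D}\upslash}$ at $d \in \bcat{D}$ with the $\infty$-categorical localization $L_W(\bcat{C}^\dagger_{d\upslash})$. Because the top scaling is maximal on the source (and, as one ought to verify, is preserved by fibrant replacement), the fibre over $d$ is a fibrant marked-scaled simplicial set with all triangles thin, and such an object presents an $\infty$-category with a marking of its equivalences, i.e.~a model for an $\infty$-categorical localization in the usual marked-simplicial-set sense. The fibre over $d$ of the pre-fibrant object $\mathbb{F}(\bcat{C})^\dagger_\sharp$ is by construction the comma bicategory $\bcat{C}^\dagger_{d\upslash}$ of \autoref{defn:upslice} equipped with its full top scaling and natural marking, so its fibrant replacement presents $L_W(\bcat{C}^\dagger_{d\upslash})$. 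Combining this identification with the preceding paragraph yields (iii). The delicate model-categorical verifications will be that fibrant replacement in $(\mbsSet)_{/\bcat{D}}$ preserves the maximal top scaling, and that taking fibres at $d$ is homotopical on 2-Cartesian fibrations; for both of these the key input is the explicit list of anodynes in \autoref{def:mbsanodyne} and the behaviour of lean/thin triangle generators \ref{mb:coCartoverThin}--\ref{mb:coCart2of3} when the thin scaling is already maximal.
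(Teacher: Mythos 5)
Your treatment of (i) $\Leftrightarrow$ (ii) is exactly the paper's argument: \autoref{lem:IffII} applied to $f$ and to $\id_{\bcat{D}}$ gives the two \textbf{MB}-anodyne vertical maps, and two-out-of-three in the model structure of \autoref{thm:MBModelStructure} does the rest. The overall strategy for (ii) $\Leftrightarrow$ (iii) --- straighten, reduce to a pointwise statement in the projective model structure, and identify the pointwise values with the localizations $L_W(\bcat{C}^\dagger_{d\upslash})$ --- is also the paper's strategy. However, there is a genuine gap at the step you yourself single out as the main point. You argue that since the fibre over $d$ of the \emph{pre-fibrant} object $(\FF(\bcat{C}),E_{\FF(\bcat{C})^\dagger},T_{\FF(\bcat{C})^\dagger}\subset\sharp)$ is $\bcat{C}^\dagger_{d\upslash}$, the fibre of its fibrant replacement ``presents $L_W(\bcat{C}^\dagger_{d\upslash})$''. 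This is precisely the assertion that fibrant replacement in $(\mbsSet)_{/\bcat{D}}$ commutes, fibrewise, with localization, and that is false for a general non-fibrant object over a base: taking fibres is only homotopically meaningful on fibrant objects, and the fibres of a fibrant replacement can differ drastically from those of the original object (already for marked simplicial sets over $\Delta^1$, the inclusion $\Delta^{\{1\}}\hookrightarrow\Delta^1$ has empty fibre over $0$, while its fibrant replacement has nonempty fibre there). The two ``delicate verifications'' you flag (preservation of the maximal lean scaling, and fibres being homotopical on 2-Cartesian fibrations) do not address this; the issue is relating the fibre of the non-fibrant object to the fibre of its replacement.

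The paper closes exactly this gap, and it does so by exploiting that $\FF(\bcat{C})^\natural\to\bcat{D}$ is \emph{already} fibrant (\autoref{prop:freefib2Cart}) and that the decorated object is obtained from it by a cofibration which only adds markings. One chooses projectively fibrant $\mathcal{F}_{\bcat{C}}$ with $\UN_{\bcat{D}}(\mathcal{F}_{\bcat{C}})\simeq\FF(\bcat{C})$, defines $\mathcal{F}^\dagger_{\bcat{C}}$ as the pushout of $\mathcal{F}_{\bcat{C}}$ along $\ST_{\bcat{D}}(\FF(\bcat{C}))\to\ST_{\bcat{D}}(\FF(\bcat{C})^\dagger)$, and observes that this pushout is computed pointwise and is a homotopy pushout at each $d$; since the straightening of the fibrant object evaluated at $d$ is equivalent to the fibre $\bcat{C}_{d\upslash}$, one obtains the identification $\mathcal{F}^\dagger_{\bcat{C}}(d)\simeq\ST_\ast(\bcat{C}^\dagger_{d\upslash})$, and only then does pointwise fibrant replacement yield $\bcat{C}^\dagger_{\bcat{D}\upslash}\times_{\bcat{D}}\{d\}\simeq L_W(\bcat{C}^\dagger_{d\upslash})$; the conclusion then follows from the fibrewise criterion for equivalences of 2-Cartesian fibrations (\cite[Prop.\ 4.25]{AGS_CartI}). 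So your proposal needs either this homotopy-pushout-of-straightenings computation or an independent ``fibrewise localization'' theorem for 2-Cartesian fibrations; as written, the key identification is asserted rather than proved.
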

\begin{proof}
  The equivalence $i) \iff ii)$ follows from \autoref{lem:IffII} and the functoriality of the free fibration. To finish the proof we will show that $ii) \iff iii)$.
  
  Consider projective-fibrant functors $\mathcal{F}_{\bcat{C}},\mathcal{F}_{\bcat{D}}:\mathfrak{C}^{\on{sc}}[\bcat{D}]^\op \to \on{Set}^{\mathbf{ms}}_\Delta$ equipped with equivalences $\on{Un}_{\bcat{D}}(\mathcal{F}_{\mathbb{C}})\simeq \mathbb{F}(\bcat{C})$ and $\on{Un}_{\bcat{D}}(\mathcal{F}_{\bcat{D}})\simeq \mathbb{F}(\bcat{D})$. We can define new functors $\mathcal{F}_{\bcat{C}}^\dagger$ and $\mathcal{F}_{\bcat{D}}^\dagger$ and a morphism $\mathcal{F}_{\bcat{C}}^\dagger\to \mathcal{F}_{\bcat{D}}^\dagger$ via pushout, e.g., 
  \[
  \begin{tikzcd}
  \ST_{\bcat{D}}(\FF(\bcat{C})) \arrow[r,hookrightarrow,"\sim"] \arrow[d] & \mathcal{F}_{\bcat{C}}\arrow[d] \\
  \ST_{\bcat{D}}(\FF(\bcat{C})^\dagger) \arrow[r,hookrightarrow,"\sim"]& \mathcal{F}_{\bcat{C}}^\dagger 
  \end{tikzcd}
  \]
  We thus see that the induced map on fibrant repaclements $\mathsf{R}(\UN_{\bcat{D}}(\mathcal{F}^\dagger_{\bcat{C}}))\to \mathsf{R}(\UN_{\bcat{D}}(\mathcal{F}^\dagger_{\bcat{C}}))$ is a model for $\bcat{C}^\dagger_{\bcat{D}\upslash}\to \bcat{D}^\dagger_{\bcat{D}\upslash}$. Moreover, the pushout is computed pointwise. Unraveling the definition, we note that for each $d\in \bcat{D}$ the square 
  \[
  \begin{tikzcd}
  \ST_\ast(\bcat{C}_{d\upslash}) \arrow[r,"\sim"]\arrow[d] & \ST_{\bcat{D}}(\FF(\bcat{C}))(d)\arrow[d]\\
  \ST_\ast(\bcat{C}_{d\upslash}^\dagger) \arrow[r,"\sim"] & \ST_{\bcat{D}}(\FF(\bcat{C})^\dagger)(d)
  \end{tikzcd}
  \]
  is a homotopy pushout. We thus have natural equivalences 
  \[
  \bcat{C}_{d\upslash}^\dagger \simeq \ST_\ast(\CC_{d\upslash}^\dagger)\simeq \ST_{\bcat{D}}(\FF(\bcat{C})^\dagger)(d)\simeq \mathcal{F}_{\bcat{C}}^\dagger(d).
  \]
  Finally, we note that there are canonical natural identifications 
  \[
  \bcat{C}^\dagger_{\bcat{D}\upslash}\times_{\bcat{D}}\{d\}\simeq \mathsf{R}(\mathcal{F}_{\bcat{C}}^\dagger)(d)\simeq L_W(\mathcal{F}^\dagger_{\bcat{C}}(d))
  \]
  so that we get a commutative diagram 
  \[
  \begin{tikzcd}
  \bcat{C}^\dagger_{\bcat{D}\upslash}\times_{\bcat{D}}\{d\} \arrow[r,"\simeq"] \arrow[d]& L_W(\bcat{C}_{d\upslash}^\dagger)\arrow[d]\\
  \bcat{D}^\dagger_{\bcat{D}\upslash}\times_{\bcat{D}}\{d\} \arrow[r,"\simeq"] & L_W(\bcat{D}_{d\upslash}^\dagger)
  \end{tikzcd}
  \]
  The proposition then follows from \cite[Prop. 4.25]{AGS_CartI}.
\end{proof}

\begin{proposition}\label{prop:initial2version}
  Let $p:\bcat{X} \to \bcat{D}$ be a 2-Cartesian fibration such that every triangle in $\bcat{X}$ is lean. Suppose that for every $d \in \bcat{D}$ there exists an initial object $i_d \in \bcat{X}_d$ in the fibre over $d$. Then the restriction of $p$ to the the marked biscaled simplicial set spanned by initial objects  $\hat{p}: \hat{\bcat{X}} \to \bcat{D}$ is a trivial fibration of scaled simplicial sets.
\end{proposition}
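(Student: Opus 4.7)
The plan is to verify that $\hat{p}$ has the right lifting property against the generating cofibrations of the scaled model structure on $\scsSet$: the boundary inclusions $\partial \Delta^n \hookrightarrow \Delta^n$ with matching scalings, together with the scaling-change map $\Delta^2_{\flat} \hookrightarrow \Delta^2_{\sharp}$.

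First I would dispatch the scaling condition. Since every triangle in $\bcat{X}$ is lean by hypothesis and $\hat{\bcat{X}}$ is a full sub-mb simplicial set, a 2-simplex of $\hat{\bcat{X}}$ is thin precisely when it is thin in $\bcat{X}$, which happens precisely when its image in $\bcat{D}$ is thin, using the 2-Cartesian structure together with \ref{mb:coCartoverThin}. In particular, any thin 2-simplex of $\bcat{D}$ with a prescribed lift of its boundary in $\hat{\bcat{X}}$ will automatically lift thinly.

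For the lifting against boundary inclusions I would induct on $n$. The base case $n = 0$ is immediate from the hypothesis that every fibre contains an initial object. For $n = 1$, given an edge $e : d_0 \to d_1$ in $\bcat{D}$ with chosen initial lifts $i_0, i_1$, I would first take a $p$-Cartesian lift $\tilde{e} : y \to i_1$ of $e$ (which exists by \ref{mb:2CartliftsExist}), then use the initiality of $i_0$ in the fibre $\bcat{X}_{d_0}$ to produce a morphism $\alpha : i_0 \to y$. The thin-scaling axioms (specifically \ref{mb:composeacrossthin}) allow one to compose $\tilde{e}$ and $\alpha$ into a single edge $i_0 \to i_1$ lifting $e$. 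For $n \geq 2$, given a lifting problem with boundary $\partial \Delta^n \to \hat{\bcat{X}}$ over $\Delta^n \to \bcat{D}$, I would build a preliminary filler $\sigma : \Delta^n \to \bcat{X}$ by iterated $p$-Cartesian transport from the terminal vertex $i_{d_n}$, and then use the $(\infty,2)$-categorical initiality condition, which amounts to contractibility of the mapping categories out of the initial objects, to deform $\sigma$ into a filler whose boundary matches the prescribed data.

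The main obstacle is the inductive deformation in the $n \geq 2$ case: rigorously producing an $n$-simplex of $\hat{\bcat{X}}$ whose boundary precisely matches the prescribed data, rather than one that is only equivalent to it, requires iteratively correcting each face via the contractibility of mapping categories from initial objects, while tracking higher coherences. The hypothesis that every triangle in $\bcat{X}$ is lean is essential here, as it ensures that all scaling obligations arising during the various composition and adjustment steps (governed by the generators \ref{mb:innerhorn}, \ref{mb:composeacrossthin}, and \ref{mb:coCartoverThin}) are automatically satisfied. This lean-everywhere regime effectively reduces the higher-dimensional lifting problem to an iterated application of the 2-Cartesian lifting property combined with the universal property of initial objects in each fibre.
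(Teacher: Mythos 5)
Your overall framework—checking the right lifting property against the generating cofibrations $(\partial\Delta^n,\flat)\to(\Delta^n,\flat)$ and $(\Delta^2,\flat)\to(\Delta^2,\sharp)$—is a legitimate characterization of trivial fibrations, and your treatment of the scaling generator (via \ref{mb:coCartoverThin} and the all-triangles-lean hypothesis) and of the cases $n=0,1$ is essentially fine. The problem is the case $n\geq 2$, which you yourself flag as ``the main obstacle'' and then do not resolve: building a preliminary filler by iterated Cartesian transport and then ``deforming'' it so that its boundary matches the prescribed data on the nose is not an argument. The lifting problem demands a simplex of $\hat{\bcat{X}}$ lying \emph{strictly} over the given simplex of $\bcat{D}$ and restricting \emph{strictly} to the given boundary; converting an up-to-equivalence filler into such a strict solution, coherently in all dimensions, is exactly the hard content of the proposition, and fibrewise contractibility of mapping categories out of initial objects does not by itself supply the needed relative extension property over a nondegenerate simplex of the base. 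So as written there is a genuine gap at the decisive step.

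The paper sidesteps strict boundary-filling entirely: it first shows $\hat{p}$ is a fibration of scaled simplicial sets (right lifting against scaled anodynes follows from $p$ being a $\bS$-fibration, and the isofibration condition, which suffices by \cite[Cor 6.4]{GHL_Equivalence}, is checked by pulling back to the underlying $\infty$-category and invoking \cite[Prop. 2.4.4.9]{HTT}), and then shows $\hat{p}$ is a bicategorical equivalence: it is surjective on objects by hypothesis, and for fully faithfulness the all-triangles-lean hypothesis upgrades each map of mapping $\infty$-categories to a left fibration, whose fibres are contractible by \cite[Proposition 4.21]{AGS_CartI} using initiality. A fibration which is a weak equivalence is a trivial fibration by model-category generalities, which yields the lifting property against all cofibrations for free. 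If you want to salvage your direct approach, you would need a relative, over-the-base analogue of the classical ``initial objects admit all $\partial\Delta^n$-extensions'' criterion in the 2-Cartesian setting—precisely the kind of statement the cited results are doing for you—rather than an appeal to deformation.
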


\begin{proof}
  We first show that $\hat{p}$ is a fibration in the model structure on $\scsSet$. Since $p$ is a 2-Cartesian fibration, it is easy to see that $\hat{p}$ has the right lifting property against all scaled anodyne morphisms. By virtue of \cite[Cor 6.4]{GHL_Equivalence} it will suffice to check that $\hat{p}$ is an isofibration. Let $d_0 \to d_1=p(x_1)$ be an equivalence in $\bcat{D}$ and pick a lift $x_0 \to x_1$ such that $x_1$ is initial in the fibre over $d_1$. Let us pick an initial object $\hat{x}_0$  and consider the composite morphism $u:\hat{x}_0 \to x_1$. We claim that $u$ is an equivalence. Let $\scr{D} \subset \bcat{D}$ denote the underlying $\infty$-category of $\bcat{D}$ and let us consider a pullback diagram
  \[
  \begin{tikzcd}[ampersand replacement=\&]
  \bcat{X}_{\scr{D}} \arrow[d] \arrow[r] \& \bcat{X} \arrow[d,"p"] \\
  \scr{D} \arrow[r] \& \bcat{D}
  \end{tikzcd}
  \]
  where the left-most vertical morphism is a Cartesian fibration of $\infty$-categories. Let $\hat{\bcat{X}}_{\scr{D}}$ denote the restriction to the full subcategory on fibrewise initial objects. Then it follows from \cite[Prop. 2.4.4.9]{HTT} that the restriction $\hat{\bcat{X}}_{\scr{D}}\to \scr{D}$ is a trivial Kan fibration. In particular it detects equivalences and the claim follows.
  
  We have thus reduced our problem to showing that $\hat{p}$ is a bicategorical equivalence. By our hypothesis it follows that $\hat{p}$ is surjective on objects. To finish the proof we will check that for every pair of objects $x,y \in \bcat{X}$ the induced morphism of mapping $\infty$-categories
  \[
  \func{\hat{p}_{x,y}:\on{Map}_{\hat{\bcat{X}}}(x,y) \to \on{Map}_{\bcat{D}}(\hat{p}(x),\hat{p}(y))}
  \]
  is an equivalence. Note that since every 2-simplex in $\bcat{X}$ is lean it follows that not only is $p_{x,y}$ a coCartesian fibration, it is also a left fibration. Therefore we reduce our problem to showing that the fibres of $\hat{p}_{x,y}$ are all contractible. This follows from our hypothesis using \cite[Proposition 4.21]{AGS_CartI}
\end{proof}

\begin{lemma}
  Let $\bcat{D}^{\dagger}$ be a marked $\infty$-bicategory. Then the 2-Cartesian fibration $\bcat{D}^{\dagger}_{\bcat{D}\upslash} \to \bcat{D}$ satisfies the hypothesis of \autoref{prop:initial2version}.
\end{lemma}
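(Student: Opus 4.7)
The plan is to verify the two conditions comprising the hypothesis of \autoref{prop:initial2version}: every triangle in $\bcat{D}^\dagger_{\bcat{D}\upslash}$ is lean, and each fibre over an object $d \in \bcat{D}$ admits an initial object.

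The first condition is essentially immediate from \autoref{def:upcomafibrant}: $\bcat{D}^\dagger_{\bcat{D}\upslash}$ is defined as a fibrant replacement of $(\mathbb{F}(\bcat{D}),E_{\mathbb{F}(\bcat{D})^\dagger},T_{\mathbb{F}(\bcat{D})^\dagger}\subset \sharp)$, whose lean scaling is already maximal. A glance at the generating \bS-anodyne maps \ref{mb:innerhorn}--\ref{mb:equivalences} shows that none of them can obstruct the property $C_X = \sharp$, so one can arrange the fibrant replacement to retain all triangles lean.

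For the second condition, I claim that for each $d$ the identity $\on{id}_d: d \to d$ defines an initial object in the fibre $\bcat{D}^\dagger_{d\upslash}$. To prove this, I would use the unwinding of morphisms in $\bcat{D}^\dagger_{d\upslash}$ given in the remark following \autoref{prop:freefib2Cart} (see also \autoref{defn:upslice}): a 1-morphism from $\on{id}_d$ to $v:d\to c$ consists of a 1-morphism $\alpha:d\to c$ in $\bcat{D}$ together with a 2-morphism $\alpha\simeq \alpha\circ \on{id}_d \Rightarrow v$, while 2-morphisms are 2-cells $\alpha \nat \alpha'$ compatible with these structure 2-cells. The resulting mapping $\infty$-category $\on{Map}_{\bcat{D}^\dagger_{d\upslash}}(\on{id}_d,v)$ is then readily identified with the slice $\on{Map}_{\bcat{D}}(d,c)_{/v}$, which admits $(v,\on{id}_v)$ as a terminal object and is in particular weakly contractible. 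Invoking \cite[Prop.~4.21]{AGS_CartI} (as in the proof of \autoref{prop:initial2version}) then yields that $\on{id}_d$ is initial in the fibre.

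The main technical subtlety concerns the transition from the pre-fibrant object $\mathbb{F}(\bcat{D})^\dagger$, whose fibres we understand explicitly, to its fibrant replacement $\bcat{D}^\dagger_{\bcat{D}\upslash}$, whose fibres are the actual object of interest. One must argue that passing to the fibrant replacement does not destroy the initiality of $\on{id}_d$; this follows because the replacement is a weak equivalence over $\bcat{D}$, so the fibres are bicategorically equivalent (as exploited in \autoref{prop:prebigtheorem}), and being initial is invariant under such equivalence.
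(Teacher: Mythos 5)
There is a genuine gap at the bridging step, and it sits exactly where the marking does its work. A weak equivalence in $(\mbsSet)_{/\bcat{D}}$ between \emph{non-fibrant} objects does not induce bicategorical equivalences on fibres; that description is only valid between fibrant objects, i.e.\ 2-Cartesian fibrations. The object being replaced in \autoref{def:upcomafibrant} is $(\mathbb{F}(\bcat{D}),E_{\mathbb{F}(\bcat{D})^\dagger},T_{\mathbb{F}(\bcat{D})^\dagger}\subset\sharp)$, with \emph{all} triangles declared lean, and this maximal lean scaling changes the fibres upon fibrant replacement: by the identifications in the proof of \autoref{prop:prebigtheorem}, the fibre of $\bcat{D}^\dagger_{\bcat{D}\upslash}$ over $d$ is $L_W(\bcat{D}^\dagger_{d\upslash})$, the localization of the slice at its marked edges, not $\bcat{D}^\dagger_{d\upslash}$ itself. (If fibres were preserved up to equivalence, the marking would be invisible and the criterion of \autoref{thm:cofinality} would not involve localizations at all.) So ``initiality is invariant under such equivalence'' does not apply; what must be shown is that $\on{id}_d$ is initial \emph{in the localization}. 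Your identification $\on{Map}(\on{id}_d,v)\simeq\on{Map}_{\bcat{D}}(d,c)_{/v}$ is the right ingredient --- it is essentially \autoref{lem:etaterminal}, which exhibits a terminal object in this mapping $\infty$-category --- but you still need the extra step that contractibility of the mapping categories out of $\on{id}_d$ in $\bcat{D}^\dagger_{d\upslash}$ forces $\on{id}_d$ to remain initial after inverting the marked morphisms; this, together with the fibrewise identification with $L_W(\bcat{D}^\dagger_{d\upslash})$, is precisely how the paper concludes.

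Your treatment of the leanness condition is also not correct as stated. A fibrant replacement built by pushouts along generating \bS-anodyne maps does not automatically preserve the property $C_X=\sharp$: a pushout along \ref{mb:innerhorn} with $n=3$ adjoins the missing face $d_i\Delta^3$, which is not lean in $(\Delta^3,\flat,\{\Delta^{\{i-1,i,i+1\}}\})$, and similarly pushouts along \ref{mb:2Cartesianmorphs} adjoin triangles carrying no lean scaling. Hence ``none of the generating maps can obstruct $C_X=\sharp$'' fails, and one cannot simply arrange an arbitrary small-object-argument replacement to remain maximally lean scaled without further argument. The paper avoids this by working with the specific model of $\bcat{D}^\dagger_{\bcat{D}\upslash}$ produced in the proof of \autoref{prop:prebigtheorem} (a fibrant replacement of the unstraightening of the pushed-out straightening), for which leanness of every triangle is immediate from the construction and for which the fibrewise identification with $L_W(\bcat{D}^\dagger_{d\upslash})$ needed above is available.
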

\begin{proof}
  Recall the model for $\bcat{D}^{\dagger}_{\mathbb{D}\upslash}$ given in the proof of \autoref{prop:prebigtheorem}. As a direct consequence we observe that every triangle in $\bcat{D}^{\dagger}_{\mathbb{D}\upslash}$ is lean. We claim that for every $d \in \mathbb{D}$ the identity morphism $\on{id}_d$ on $d$ is initial in its corresponding fibre. Note that we can identify the fibre over $d$ with $L_W(\mathbb{D}^\dagger_{d\upslash})$. Since for every object $f:d \to d'$ , the mapping $\infty$-category $\on{Map}_{\mathbb{D}_{d\upslash}}(\on{id}_d,f)$ is contractible due to \autoref{lem:etaterminal} it follows that $\on{id}_d$ is initial in the localisation.
\end{proof}

\begin{lemma}\label{lem:etaterminal}
  Let $\bcat{D}$ be an $\infty$-bicategory. Let $\on{id}_d: d \to d$ and $e:d \to d'$ be a pair of edges in $\bcat{D}$ such that $\on{id}_d$ is degenerate. Let $r:\Delta^1 \times \Delta^1 \to \Delta^1$ be the morphism that sends every vertex to $0$ except $(1,1)$ which gets sent to $1$. Then the composite
  \[
  \func{\eta_e:\Delta^1 \times \Delta^1 \to[r] \Delta^1 \to[e] \bcat{D}}
  \] 
  defines a terminal object in the mapping $\infty$-category $\on{Map}_{\bcat{D}_{d\upslash}}(\on{id}_d,f)$.
\end{lemma}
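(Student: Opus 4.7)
The plan is to identify $\eta_e$ with the pair $(e,\on{id}_e)$ in a slice $\infty$-category and invoke the standard terminality of such universal objects.

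First, I would unwind the definition of $\bcat{D}_{d\upslash}$ as the fibre of $\mathbb{F}(\bcat{D})^\natural \to \bcat{D}$ over $d$. A $1$-morphism in $\bcat{D}_{d\upslash}$ from $\on{id}_d$ to $e:d \to d'$ is a map $\Delta^1 \widehat{\otimes} \Delta^1 \to \bcat{D}$ whose restriction to $\set{0}\otimes\Delta^1$ is constant at $d$, whose source leg $\Delta^1\otimes \set{0}$ is $\on{id}_d$, and whose target leg $\Delta^1\otimes\set{1}$ is $e$. The scaling data of $\Delta^1 \otimes \Delta^1$ forces the ``lower-right'' triangle to be thin, which in conjunction with the degeneracy of $\on{id}_d$ identifies the diagonal with an edge $g:d\to d'$ along the bottom; the remaining (non-thin) triangle then encodes a genuine $2$-morphism relating $g$ and $e$. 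Analysing higher simplices in the same way, I would produce a natural equivalence of $\infty$-categories
\[
\on{Map}_{\bcat{D}_{d\upslash}}(\on{id}_d, e) \simeq \on{Map}_{\bcat{D}}(d, d')_{/e}.
\]

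Under this identification, $\eta_e$ corresponds to the pair $(e,\on{id}_e)$: since $\eta_e$ factors as $e \circ r$, both the underlying edge $g$ and the encoded $2$-morphism are degenerate, yielding $g = e$ and the identity $2$-morphism on $e$. Terminality of $(e,\on{id}_e)$ in $\on{Map}_{\bcat{D}}(d, d')_{/e}$ is a standard $\infty$-categorical fact (see e.g.\ \cite{HTT}), from which the lemma follows.

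The main obstacle is the combinatorial bookkeeping of the scaling and marking conditions on $\Delta^1 \widehat{\otimes} \Delta^n$ for $n \geq 1$ when verifying the displayed equivalence of mapping $\infty$-categories, and in particular in seeing that the "constant left leg with degenerate source" constraints collapse in a homotopy-coherent fashion to a slice construction. A direct alternative that avoids this is to check the horn-filling characterization of terminal objects by hand: show that every $f: \Lambda^n_n \to \on{Map}_{\bcat{D}_{d\upslash}}(\on{id}_d, e)$ with $f|_{\Delta^{\set{n}}} = \eta_e$ extends to a map out of $\Delta^n$. Transposing via the adjunction defining the mapping $\infty$-category, this becomes an $\mathbf{MS}$-lifting problem against a map into $\bcat{D}$, solvable by $\mathbf{MS}$-anodyne extensions, exploiting the degeneracy of $\eta_e$ (which factors through a $1$-simplex) to ensure the relevant inner-horn fillers exist in $\bcat{D}$.
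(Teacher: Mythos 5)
Your main route has a genuine gap: the displayed equivalence $\on{Map}_{\bcat{D}_{d\upslash}}(\on{id}_d,e)\simeq \on{Map}_{\bcat{D}}(d,d')_{/e}$ is essentially the whole content of the lemma, and you do not prove it. You correctly identify the ``combinatorial bookkeeping'' of the scalings on $\Delta^1\widehat{\otimes}\Delta^n$ as the main obstacle, but then simply defer it; producing that identification naturally in all simplicial degrees, compatibly with the Gray-product scaling, is precisely the simplex-level analysis the paper carries out directly. It is also more delicate than you suggest: for a square from $\on{id}_d$ to $e$, the unscaled triangle encodes a $2$-cell between $e$ and the diagonal whose direction depends on the conventions for the Gray product and for mapping categories of fibres, so it is not even immediate whether one lands in the slice $\on{Map}(d,d')_{/e}$ or in (an opposite of) the coslice $\on{Map}(d,d')_{e/}$ --- and the terminal/initial conclusion hinges on getting this right. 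Once such an identification is in place, terminality of $(e,\on{id}_e)$ is indeed standard, but the reduction itself is the lemma.

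Your fallback sketch is, in spirit, the paper's actual proof --- transpose a filling problem in the mapping $\infty$-category to an extension problem against $\bcat{D}$ and solve it by anodyne maps --- but as stated it contains two errors. First, the criterion for terminality is extension along $\partial\Delta^n\subset\Delta^n$ of boundaries whose last vertex is $\eta_e$, not filling of $\Lambda^n_n$-horns with last vertex $\eta_e$; the latter condition neither follows from nor implies terminality (already in the nerve of a $1$-category a terminal object need not fill $\Lambda^2_2$-horns, since that would require maps between the two sources). Second, the transposed problem is not solved by inner-horn fillers. In the paper's argument one works inside $\Delta^1\otimes\Delta^{n+1}$ with certain auxiliary triangles scaled (legitimately, because $\partial\alpha$ sends them to thin simplices), passes by a scaled anodyne map to the subcomplex of simplices skipping the vertex $(1,1)$, and the decisive step is a pushout along the left-degenerate outer horn $\Lambda^{n+1}_0\coprod_{\Delta^{\{0,1\}}}\Delta^0\to\Delta^{n+1}\coprod_{\Delta^{\{0,1\}}}\Delta^0$, which is scaled anodyne only because the triangle $\{0,1,n+1\}$ has been made thin; this is exactly where the degeneracy of $\on{id}_d$ and the special form of $\eta_e$ enter. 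An appeal to ``the relevant inner-horn fillers exist in $\bcat{D}$'' misses this outer-horn step, which is the heart of the argument.
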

\begin{proof}
  We will show that every boundary $\partial \alpha: \partial \Delta^n \to \on{Map}_{\bcat{D}_{d\upslash}}(\on{id}_d,f)$ such that $\partial \alpha  (n)=\eta_e$ can be extended to an $n$-simplex $\alpha:\Delta^n \to \on{Map}_{\bcat{D}_{d\upslash}}(\on{id}_d,f)$.
  
  We define a subsimplicial subset (with the inherited scaling) $\mathcal{S}^{n+1} \subset \Delta^1\otimes \Delta^{n+1}$  consisting of precisely those simplices $\sigma$ satisfying at least one of the conditions below:
  \begin{itemize}
    \item The simplex $\sigma$ is contained in $\Delta^{\set{0}}\times \Delta^{n+1} $.
    \item Given $j \in [n+1]$ the simplex $\sigma$ skips vertices of the form $(\epsilon,j)$ with $\epsilon \in \set{0,1}$.
  \end{itemize}
  Unraveling the definitions we see that we need to solve the associated lifting problem
  \[
  \begin{tikzcd}[ampersand replacement=\&]
  \mathcal{S}^{n+1} \arrow[d,"\iota"] \arrow[r,"\partial \alpha"] \& \bcat{D} \\
  \Delta^1\otimes \Delta^{n+1} \arrow[ur,dotted] \&
  \end{tikzcd}
  \]
  We will abuse notation and denote by $ \Delta^1\otimes \Delta^{n+1} $ the Gray product where we are additionally scaling the triangles $(0,j) \to (0,j+1) \to (1,j+1)$ whenever $j<n$ and the triangle $(n,0) \to (n+1,0) \to (1,n+1,1)$. We will carry this additional scaling to $\mathcal{S}^{n+1}$. Note that by construction $\partial \alpha$ sends those triangles to thin simplices in $\bcat{D}$. We produce a factorization 
  \[
  \func{\mathcal{S}^{n+1} \to[u] \mathcal{R}^{n+1} \to[v] \Delta^1\otimes\Delta^{n+1}}
  \]
  where $\mathcal{R}^{n+1}$ consists of those simplices of $\Delta^1\otimes \Delta^{n+1} $ that skip the vertex $(1,1)$. It is easy to see that $u$ is scaled anodyne and that $v$ fits into a pushout square
  \[
  \begin{tikzcd}[ampersand replacement=\&]
  \Lambda^{n+1}_0 \coprod\limits_{\Delta^{\set{0,1}}}\Delta^{0} \arrow[r] \arrow[d] \& \Delta^{n+1}\coprod\limits_{\Delta^{\set{0,1}}}\Delta^0 \arrow[d] \\
  \mathcal{R}^{n+1} \arrow[r,"v"] \& \Delta^1\otimes\Delta^{n+1}
  \end{tikzcd}
  \]
  since the triangle $\set{0,1,n}$ is thin by construction it follows that $v$ is also scaled anodyne. The result now follows.
\end{proof}

We now arrive at the main theorem of this section, which provides a computational criterion for cofinality. 

\begin{theorem}\label{thm:cofinality}
  Let $f:\bcat{C}^\dagger \to \bcat{D}^\dagger$ be a functor of marked $\infty$-bicategories. Then the following statements are equivalent
  \begin{enumerate}
    \item The functor $f$ is marked cofinal.
    \item For every $d \in \bcat{D}$ the functor $f$ induces an equivalence of $\infty$-categorical localizations $L_W(\bcat{C}_{d\upslash}^\dagger) \to L_W(\bcat{D}_{d\upslash}^\dagger)$.
    \item The following conditions hold:
    \begin{itemize}
      \item[i)] For every $d \in \bcat{D}$ there exists a morphism $g_d:d \to f(c)$ which is initial in $L_W(\bcat{C}^\dagger_{d \upslash})$ and  $L_W(\bcat{D}^\dagger_{d \upslash})$.
      \item[ii)] Every marked morphism $d \to f(c)$ defines an initial object in $L_W(\bcat{C}^{\dagger}_{d\upslash})$.
      \item[iii)] For any marked morphism $d \to b$ in $\bcat{D}$ the induced functor $L_W(\bcat{C}^\dagger_{b \upslash}) \to L_W(\bcat{C}^\dagger_{d \upslash})$ preserves initial objects.
    \end{itemize}
  \end{enumerate}
\end{theorem}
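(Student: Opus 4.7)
The equivalence $(1) \Leftrightarrow (2)$ is already established as \autoref{prop:prebigtheorem} (with the relabeling of conditions), so the remaining content is $(2) \Leftrightarrow (3)$. The entire argument will be carried out by comparing the fibres of the 2-Cartesian fibrations $\bcat{C}^\dagger_{\bcat{D}\upslash}\to \bcat{D}$ and $\bcat{D}^\dagger_{\bcat{D}\upslash}\to \bcat{D}$, using the structural results we have established about $\bcat{D}^\dagger_{\bcat{D}\upslash}$, namely that $\on{id}_d\in \bcat{D}^\dagger_{d\upslash}$ is initial in $L_W(\bcat{D}^\dagger_{d\upslash})$ (via \autoref{lem:etaterminal}) and that the resulting sub-fibration on initial objects is a trivial fibration over $\bcat{D}$ (via \autoref{prop:initial2version}).

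\noindent\textbf{Direction $(2)\Rightarrow (3)$.} Assuming the equivalence of localizations $\phi_d\colon L_W(\bcat{C}^\dagger_{d\upslash}) \to L_W(\bcat{D}^\dagger_{d\upslash})$, I would treat the three items in turn. For (i), I would take $g_d$ to be any object of $\bcat{C}^\dagger_{d\upslash}$ mapping under $\phi_d$ to the initial object $\on{id}_d$; since $\phi_d$ is an equivalence and $\on{id}_d$ is initial, $g_d$ is initial in $L_W(\bcat{C}^\dagger_{d\upslash})$, and it is also initial in $L_W(\bcat{D}^\dagger_{d\upslash})$ by transport along $\phi_d$. For (ii), I would observe that a marked morphism $u\colon d\to f(c)$ in $\bcat{D}^\dagger$, viewed as an object of $\bcat{D}^\dagger_{d\upslash}$, receives a marked morphism from $\on{id}_d$ (namely $u$ itself together with the canonical structure $2$-cell), so $u$ becomes an equivalence to $\on{id}_d$ in $L_W(\bcat{D}^\dagger_{d\upslash})$ and hence is initial; invoking $\phi_d^{-1}$ transfers initiality to $L_W(\bcat{C}^\dagger_{d\upslash})$. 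For (iii), I would use that the $2$-Cartesian fibration $\bcat{D}^\dagger_{\bcat{D}\upslash}\to \bcat{D}$ sends a marked morphism $d\to b$ in $\bcat{D}^\dagger$ to the Cartesian-transport functor $L_W(\bcat{D}^\dagger_{b\upslash})\to L_W(\bcat{D}^\dagger_{d\upslash})$, which sends $\on{id}_b$ to the composite, a marked morphism (hence initial by (ii)); naturality of the fibrewise equivalences $\phi_d$ in $d$ (which is packaged into the assumption that the map of 2-Cartesian fibrations $\bcat{C}^\dagger_{\bcat{D}\upslash}\to \bcat{D}^\dagger_{\bcat{D}\upslash}$ is a fibrewise equivalence) transfers this preservation property to $\bcat{C}^\dagger_{\bcat{D}\upslash}$.

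\noindent\textbf{Direction $(3)\Rightarrow (2)$.} This is the main technical obstacle. My plan is to use (i) and (iii) to show that the sub-$\bS$ simplicial set of $\bcat{C}^\dagger_{\bcat{D}\upslash}$ spanned by fibrewise initial objects is a $2$-Cartesian subfibration satisfying the hypotheses of \autoref{prop:initial2version}, so that it is a trivial fibration $\hat{\bcat{C}}^\dagger_{\bcat{D}\upslash}\to \bcat{D}$; the analogous statement for $\bcat{D}^\dagger_{\bcat{D}\upslash}$ is already known. The comparison of these two trivial fibrations over $\bcat{D}$ then gives an equivalence of the (contractible) spaces of sections, so we obtain compatible choices of fibrewise initial objects sitting in a commutative square
\[
\begin{tikzcd}
\bcat{D} \arrow[r] \arrow[dr,equal] & \hat{\bcat{C}}^\dagger_{\bcat{D}\upslash} \arrow[r] \arrow[d,"\simeq"] & \hat{\bcat{D}}^\dagger_{\bcat{D}\upslash} \arrow[dl,"\simeq"] \\
& \bcat{D} &
\end{tikzcd}
\]
which gives, fibrewise, the datum of an initial object in $L_W(\bcat{C}^\dagger_{d\upslash})$ mapping to an initial object in $L_W(\bcat{D}^\dagger_{d\upslash})$. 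To upgrade this to a full fibrewise equivalence $\phi_d$, the plan is to argue that condition (ii) forces $\phi_d$ to invert precisely the morphisms out of the initial object whose targets are marked objects, and then combine this with an elementary computation of the mapping space from an initial object to an arbitrary object to show that $\phi_d$ is essentially surjective (via Cartesian lifts using (iii)) and fully faithful.

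\noindent\textbf{Expected main obstacle.} The fully-faithfulness step in $(3)\Rightarrow (2)$ is where I expect to need the most care: identifying $\phi_d$ as an equivalence requires analyzing mapping spaces in the localization $L_W(\bcat{C}^\dagger_{d\upslash})$ out of objects that need not themselves be initial, and transferring these computations along the Cartesian transport functors provided by (iii). The cleanest way to do this is likely to reduce to the statement that, if a map of 2-Cartesian fibrations restricts to an equivalence on the subfibrations of fibrewise initial objects and this subfibration absorbs the Cartesian transport of marked morphisms, then it is a fibrewise equivalence globally; this reduction uses \cite[Prop.~4.25]{AGS_CartI} as in the proof of \autoref{prop:prebigtheorem}.
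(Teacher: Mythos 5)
Your handling of $(1)\Leftrightarrow(2)$ via \autoref{prop:prebigtheorem} and your argument for $(2)\Rightarrow(3)$ match the paper's proof essentially step for step (transport of $\on{id}_d$ along the fibrewise equivalence, marked objects being equivalent to $\on{id}_d$ in $L_W(\bcat{D}^\dagger_{d\upslash})$, and the commutative square of localizations for condition (iii)). The problem is the direction $(3)\Rightarrow(2)$, which is where the real content lies and where your plan has a genuine gap. After producing compatible fibrewise initial objects from \autoref{prop:initial2version}, you defer the remaining work to a reduction principle: ``if a map of 2-Cartesian fibrations restricts to an equivalence on the subfibrations of fibrewise initial objects and this subfibration absorbs the Cartesian transport of marked morphisms, then it is a fibrewise equivalence globally.'' As stated this is false: already over $\Delta^0$ it would say that any functor of $\infty$-categories with initial objects which preserves them is an equivalence. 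Nothing in your sketch (inverting morphisms to marked targets using (ii), ``elementary'' mapping-space computations, essential surjectivity via Cartesian lifts) supplies the missing mechanism, and you yourself flag fully-faithfulness as an unresolved obstacle rather than proving it.

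The ingredient you are missing is \autoref{lem:IffII}: applied to the identity of $\bcat{D}^\dagger$, it says that $\bcat{D}^\dagger_\sharp \to \bcat{D}^\dagger_{\bcat{D}\upslash}$ is \textbf{MB}-anodyne, i.e.\ the target of the comparison map $\mathcal{A}_f$ is generated by the base up to trivial cofibration. This is what makes an explicit two-sided inverse constructible: one takes the section $s_f$ of $\bcat{C}^\dagger_{\bcat{D}\upslash}$ through the initial objects $g_d$ (condition (i) plus \autoref{prop:initial2version}), checks that $s_f$ sends marked edges of $\bcat{D}^\dagger_\sharp$ to Cartesian edges of $\bcat{C}^\dagger_{\bcat{D}\upslash}$ --- this is exactly where condition (iii) is used, via a $\Lambda^2_2$-filling argument comparing $s_f(e)$ with a Cartesian lift ending at $g_b$ --- and then extends $s_f$ along the anodyne inclusion to obtain $\mathcal{I}_f:\bcat{D}^\dagger_{\bcat{D}\upslash}\to \bcat{C}^\dagger_{\bcat{D}\upslash}$. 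The identities $\mathcal{A}_f\circ\mathcal{I}_f\simeq \on{id}$ and $\mathcal{I}_f\circ\mathcal{A}_f\simeq \on{id}$ are then verified only after restriction along the anodyne maps $i_{\bcat{D}}$ and $i_f$, by building marked homotopies between sections valued in fibrewise initial objects over the trivial fibrations of \autoref{prop:initial2version}; condition (ii) enters precisely here, to guarantee that the canonical section $c\mapsto \on{id}_{f(c)}$ of $f^{*}\bigl(\bcat{C}^\dagger_{\bcat{D}\upslash}\bigr)$ is valued in initial objects. Your proposal contains neither the extension-along-anodyne step nor a concrete role for (ii), so as written the key implication is not established.
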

\begin{proof}
  By \autoref{prop:prebigtheorem} it will suffice to show that $2$ holds if and only if $3$ holds. Let us suppose that $2$ holds. Since by hypothesis the morphism $L_W(\bcat{C}^{\dagger}_{d\upslash}) \to L_W(\bcat{D}^{\dagger}_{d\upslash})$ is an equivalence of $\infty$-categories we can pick an object $d\to f(c)$ whose image in $L_W(\bcat{D}^{\dagger}_{d\upslash})$ is equivalent to $\on{id}_d$. Since equivalences preserve and detect initial objects we see that condition $i)$ is satisfied. To see that condition $ii)$ holds we just note that every marked morphism in $L_W(\bcat{D}^{\dagger}_{d\upslash})$ is equivalent to $\on{id}_d$. Using again that equivalences detect initial objects the claim follows. For the final condition we consider a commutative diagram
  \[
  \begin{tikzcd}[ampersand replacement=\&]
  L_W(\bcat{C}^{\dagger}_{b\upslash}) \arrow[r,"\simeq"] \arrow[d] \& L_W(\bcat{D}^{\dagger}_{b\upslash}) \arrow[d] \\
  L_W(\bcat{C}^{\dagger}_{d\upslash}) \arrow[r,"\simeq"] \& L_W(\bcat{D}^{\dagger}_{d\upslash})
  \end{tikzcd}
  \] 
  It is now clear that condition $iii)$ holds if the right-most vertical morphism preserves initial objects. We  observe that this map sends the identity on $b$ to an object represented by a marked morphism and thus preserves initial objects.
  
  Now let us suppose that the conditions in $3$ are satisfied. Using \autoref{prop:prebigtheorem} we see that it will suffice to show that the induced morphism of fibrant replacements $\mathcal{A}_f:\bcat{C}^\dagger_{\mathbb{D}\upslash} \to \bcat{D}^\dagger_{\bcat{D}\upslash}$ is an equivalence of 2-Cartesian fibrations. Notice that by assumption it follows that $\bcat{C}^{\dagger}_{\bcat{D}\upslash}$ satisfies the hypothesis of \autoref{prop:initial2version}. Let us denote by $\hat{\bcat{C}}^{\dagger}_{\bcat{D}\upslash}$ the full marked-biscaled simplicial set spanned by fibrewise initial objects and similarly for $\hat{\bcat{D}}^{\dagger}_{\bcat{D}\upslash}$. Observe that due to \autoref{prop:initial2version} we have a section
  \[
  \func{s_f:\bcat{D}_{\sharp}\to \hat{\bcat{C}}^{\dagger}_{\bcat{D}\upslash} \to \bcat{C}^\dagger_{\bcat{D}\upslash}}\enspace  \text{ where } \bcat{D}_\sharp=(\bcat{D},E_{\bcat{D}},T_{\bcat{D}}\subset \sharp).
  \]
  We can pick the section so that each $d$ gets sent to $g_d:d \to f(c)$ as in condition $i)$. We claim that $s_f$ sends marked edges in $\bcat{D}_{\sharp}^\dagger=(\bcat{D},E_{\bcat{D}^\dagger},T_{\bcat{D}^\dagger}\subset \sharp)$ to Cartesian edges in $\bcat{C}^\dagger_{\bcat{D}\upslash}$. Let $e:d \to b$ be a marked edge in $\bcat{D}_\sharp^\dagger$ and pick a Cartesian lift of $e$, $\hat{e}:\Delta^1 \to \bcat{C}^{\dagger}_{\bcat{D}\upslash}$ such that $\hat{e}(1)=g_b$. By condition $iii)$,  we have that $\hat{e}(0)$ is initial in the fibre over $d$. We consider the commutative diagram
  \[
  \begin{tikzcd}
  \Lambda^2_2 \arrow[r,"\sigma"] \arrow[d] & \bcat{C}^{\dagger}_{\bcat{D}\upslash} \arrow[d] \\
  \Delta^2 \arrow[r,"s_0(e)"] \arrow[ur,dotted,"\theta"] & \bcat{D}
  \end{tikzcd}
  \]
  with $\sigma(1\to 2)=\hat{e}$ and $\sigma(0\to 2)=s_f(e)$. The triangle $\theta$ is thin  by construction and the edge $0\to 1$ is an equivalence since it is a morphism between initial objects. It follows that $s_f(e)$ is Cartesian. We can now use  \autoref{lem:IffII} to produce a solution to the lifting problem
  \[
  \begin{tikzcd}
  \bcat{D}_\sharp^{\dagger} \arrow[d,"i_{\bcat{D}}"] \arrow[r,"s_f"] & \bcat{C}^\dagger_{\bcat{D}\upslash} \\
  \bcat{D}^\dagger_{\bcat{D}\upslash} \arrow[ur,dotted,swap,"\mathcal{I}_f"] &
  \end{tikzcd}
  \]
  We claim that $\mathcal{A}_f$ and $\mathcal{I}_f$ are mutually inverse. First we observe that  $\mathcal{A}_f \circ s_f$ is a section of $ \bcat{D}^\dagger_{\bcat{D}\upslash}$ that maps each object $d \in \bcat{D}$ to an initial object in the fibre. Using the fact that $\hat{\bcat{D}}^\dagger_{\bcat{D}\upslash} \to \bcat{D}$ is a trivial fibration we can construct a homotopy over $\bcat{D}$,  
  \[
  \func{H_{\bcat{D}}:\Delta^1 \times \bcat{D}_\sharp \to \bcat{D}^\dagger_{\bcat{D}\upslash}}    
  \]     
  between $i_{\bcat{D}}$ and $\mathcal{A}_f \circ s_f$. Observe that $i_{\bcat{D}}(d)=\on{id}_d$ so it  maps every object to an initial object in the fibre. By construction the components of $H_{{\bcat{D}}}$ are morphisms between initial objects and thus equivalences. Let $e:  d \to b$ be a marked morphism in ${\bcat{D}}_\sharp^\dagger$ then it follows that $H_{{\bcat{D}}}(0\to 1,e)$ is marked in $\bcat{D}^\dagger_{\bcat{D}\upslash}$. We can therefore upgrade the homotopy $H_{\bcat{D}}$  to a marked homotopy $H_{\bcat{D}}:(\Delta^1)^\sharp \times \bcat{D}_\sharp^\dagger \to \bcat{D}^\dagger_{\bcat{D}\upslash}$. To see that $\mathcal{A}_f \circ \mathcal{I}_f \simeq \on{id}$ it suffices to check that $\mathcal{A}_f \circ \mathcal{I}_f \circ i_{\bcat{D}} \isom i_{\bcat{D}}$, however we have
  \[
  \mathcal{A}_f \circ \mathcal{I}_f \circ i_{\bcat{D}} = \mathcal{A}_f \circ s_f \simeq i_{\bcat{D}}.
  \]
  Let us fix some notation $i_f: \bcat{C}_{\sharp}^\dagger \to \bcat{C}^\dagger_{\bcat{D}\upslash}$ and $i_{\bcat{C}}: \bcat{C}_{\sharp}^{\dagger} \to \bcat{C}^\dagger_{\bcat{C}\upslash}$. In order to show that $\mathcal{I}_f \circ \mathcal{A}_f \isom \on{id}$ we will show that $\mathcal{I}_f \circ \mathcal{A}_f \circ i_f \isom i_f$. Consider the following pullback square
  \[
  \begin{tikzcd}
  f^{*}\left(\bcat{C}^\dagger_{\bcat{D}\upslash}\right) \arrow[d] \arrow[r,"\phi"] & \bcat{C}^\dagger_{\bcat{D}\upslash} \arrow[d] \\
  \bcat{C}  \arrow[r,"f"] & \bcat{D}
  \end{tikzcd}
  \]
  and note that $i_f= \phi \circ \mathcal{B}_f\circ  i_{\bcat{C}}$ where $\mathcal{B}_f:\bcat{C}^\dagger_{\bcat{C}\upslash} \to f^{*}\left(\bcat{C}^\dagger_{\bcat{D}\upslash}\right)$ is the morphism induced by the universal property. We now observe that $\mathcal{I}_f \circ \mathcal{A}_f  \circ \phi=\phi \circ f^*(\mathcal{I}_f) \circ f^*(\mathcal{A}_f)$. A similar argument as before shows that 
  \[
  f^*(\mathcal{I}_f) \circ f^*(\mathcal{A}_f) \circ \mathcal{B}_f \circ i_{\bcat{C}} \simeq \mathcal{B}_f \circ i_{\bcat{C}}.
  \]
  This is due to the fact that both sides of the equation describe sections of $f^{*}\left(\bcat{C}^\dagger_{\bcat{D}\upslash}\right)$ with values in initial objects. Note that $\mathcal{B}_f \circ i_{\bcat{C}} (c)$ is initial as a consequence of condition $ii)$. We conclude the proof by finally noting 
  \[
  \mathcal{I}_f \circ \mathcal{A}_f \circ i_f=\mathcal{I}_f \circ \mathcal{A}_f \circ \phi \circ \mathcal{B}_f\circ  i_{\bcat{C}} =\phi \circ f^*(\mathcal{I}_f) \circ f^*(\mathcal{A}_f) \circ  \mathcal{B}_f \circ i_{\bcat{C}} \simeq \phi \circ \mathcal{B}_f \circ i_{\bcat{C}}=i_f
  \]
  We have shown that $\mathcal{I}_f \circ \mathcal{A}_f  \simeq \on{id}$ and the theorem now follows.
\end{proof}

We derive as an immediate corollary a $\infty$-bicategorical upgrade of Quillen's Theorem A.

\begin{corollary}\label{cor:Adagger}
  Let $f:\bcat{C}^\dagger \to \bcat{D}^\dagger$ be a functor of marked $\infty$-bicategories and suppose that for every $d \in \bcat{D}$ the induced functor 
  \[
    \func{L_W(\bcat{C}^\dagger_{d\upslash}) \to[\simeq] L_W(\bcat{D}^\dagger_{d\upslash})}
  \]
  is an equivalence of $\infty$-categories. Then the functor $f$ induces an equivalence upon passage to $\infty$-categorical localizations 
  \[
    \func{L_W(\bcat{C}^\dagger) \to[\simeq] L_W(\bcat{D}^\dagger)}.
  \]
\end{corollary}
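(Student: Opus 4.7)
The plan is to derive this corollary directly from \autoref{thm:cofinality} by evaluating marked cofinality on a well-chosen constant diagram. The hypothesis of the corollary is precisely condition (2) of the theorem, so $f:\bcat{C}^\dagger \to \bcat{D}^\dagger$ is marked cofinal. By \autoref{defn:markedcofinal} together with the universal property of the partially lax colimit, this means that for every diagram $F:\bcat{D}\to \bcat{A}$ the canonical comparison map $\colimdag_{\bcat{C}} F\circ f \to \colimdag_{\bcat{D}} F$ is an equivalence in $\bcat{A}$.

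The key intermediate step is the identification
$$L_W(\bcat{C}^\dagger) \;\simeq\; \colimdag_{\bcat{C}} c_{\bcat{C}}(\ast),$$
where $c_{\bcat{C}}(\ast):\bcat{C}\to \bcat{C}\!\on{at}_\infty$ is the constant functor with value the terminal $\infty$-category $\ast$, and analogously for $\bcat{D}^\dagger$. To see this, I would unravel the defining universal property: for any $\scr{X}\in \bcat{C}\!\on{at}_\infty$, an object of $\on{Nat}^{\on{gr}}_{\bcat{C}}(c_{\bcat{C}}(\ast), c_{\bcat{C}}(\scr{X}))$ consists of an object $x_c\in \scr{X}$ for each $c\in \bcat{C}$ together with a 1-morphism $\alpha_u:x_c\to x_{c'}$ for each $u:c\to c'$, subject to coherence data that must be invertible on marked 1-morphisms. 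Since $\scr{X}$ is an $\infty$-category, all 2-morphism data in the target is forced and invertible, so this reduces to a functor $\bcat{C}^\dagger \to \scr{X}$ sending marked edges to equivalences --- i.e., an object of $\Fun(L_W(\bcat{C}^\dagger),\scr{X})$. By Yoneda this pins down $\colimdag_{\bcat{C}} c_{\bcat{C}}(\ast)$ as $L_W(\bcat{C}^\dagger)$.

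Finally, since $c_{\bcat{D}}(\ast)\circ f = c_{\bcat{C}}(\ast)$, applying the cofinality of $f$ to the diagram $c_{\bcat{D}}(\ast):\bcat{D}\to \bcat{C}\!\on{at}_\infty$ yields a chain of equivalences
$$L_W(\bcat{C}^\dagger) \;\simeq\; \colimdag_{\bcat{C}} c_{\bcat{C}}(\ast) \;\simeq\; \colimdag_{\bcat{D}} c_{\bcat{D}}(\ast) \;\simeq\; L_W(\bcat{D}^\dagger),$$
whose composite is identified with $L_W(f)$ through naturality of the corepresentability isomorphism in the free variable.

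The principal obstacle lies in making the identification $L_W(\bcat{C}^\dagger)\simeq \colimdag_{\bcat{C}} c_{\bcat{C}}(\ast)$ fully rigorous. While it is morally transparent from the universal property, a complete argument requires a careful analysis of the marked-scaled Gray product from \cite[\S 4.1]{GHL_LaxLim} and a verification that, when the target is an $\infty$-category rather than an honest $\infty$-bicategory, the additional higher coherence data present in a partially lax transformation is automatically contractible. Once this identification is granted, the remainder of the argument is formal, amounting only to the commutativity of precomposition by $f$ with the Yoneda identifications.
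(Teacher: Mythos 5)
Your route is genuinely different from the paper's. The paper deduces marked cofinality from \autoref{thm:cofinality} exactly as you do, but then finishes with a short model-categorical argument: the pushforward $t_*:(\on{Set}^{\mathbf{mb}}_\Delta)_{/\bcat{D}}\to\on{Set}^{\mathbf{mb}}_\Delta$ along $\bcat{D}\to\Delta^0$ is left Quillen and every object is cofibrant, so the weak equivalence $(\bcat{C},E_{\bcat{C}},T_{\bcat{C}}\subset\sharp)\to(\bcat{D},E_{\bcat{D}},T_{\bcat{D}}\subset\sharp)$ over $\bcat{D}$ (which is literally what marked cofinality means in \autoref{defn:markedcofinal}) remains a weak equivalence after pushing forward to the point, and a fibrant replacement of $t_*(f):(\bcat{C},E_{\bcat{C}},\sharp)\to(\bcat{D},E_{\bcat{D}},\sharp)$ is precisely $L_W(\bcat{C}^\dagger)\to L_W(\bcat{D}^\dagger)$. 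In other words, the paper never invokes the colimit-theoretic characterization of cofinality; the slogan ``localization is pushforward to the point followed by fibrant replacement'' does all the work. You instead pass through the equivalence of marked cofinality with preservation of all outer partially lax colimits (available via \autoref{rmk:GHL_colim_recap}) and evaluate on the constant diagram at the terminal $\infty$-category. That strategy is legitimate and conceptually attractive, since it exhibits $L_W(\bcat{C}^\dagger)$ itself as a partially lax colimit.

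The gap is the step you flag yourself: the identification $\colimdag_{\bcat{C}}c_{\bcat{C}}(\ast)\simeq L_W(\bcat{C}^\dagger)$ is asserted, not proved. Your unraveling of $\on{Nat}^{\on{gr}}_{\bcat{C}}(\underline{\ast},\underline{\scr{X}})$ enumerates low-dimensional data and declares the higher coherences contractible, but at the $(\infty,2)$-level producing the natural equivalence $\on{Nat}^{\on{gr}}_{\bcat{C}}(\underline{\ast},\underline{\scr{X}})\simeq\Fun(L_W(\bcat{C}^\dagger),\scr{X})$ (naturality in $\scr{X}$ being indispensable for the Yoneda step and for identifying the resulting composite with $L_W(f)$) is exactly the nontrivial content; it requires either a genuine computation with the marked Gray tensor product or a citation of the formula in \cite{GHL_LaxLim} computing partially lax colimits of $\bcat{C}\!\on{at}_\infty$-valued diagrams as marked localizations of their classifying fibration, applied to the constant diagram (classified by the identity fibration), with care about the inner/outer and lax/oplax variance conventions. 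So as written your argument is a correct outline whose central lemma remains outstanding, and filling it is of comparable depth to the paper's complete two-line proof inside the \textbf{MB} model structure.
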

\begin{proof}
  By \autoref{thm:cofinality} it follows that $f$ is marked cofinal. Since the pushforward functor 
  \[
    \func{t_*:\left(\on{Set}_\Delta^{\mathbf{mb}}\right)_{/\bcat{D}} \to \on{Set}_\Delta^{\mathbf{mb}}}
  \]
  is left Quillen and every object is cofibrant, $t_\ast$ preserves weak equivalences. The image of the map $f:(\bcat{C},E_{\bcat{C}},T_{\bcat{C}}\subset \sharp) \to (\bcat{D},E_{\bcat{C}},T_{\bcat{D}}\subset \sharp)$ under $t_*$ is equivalent to the morphism $t_*(f):(\bcat{C},E_{\bcat{C}}, \sharp) \to (\bcat{D},E_{\bcat{D}},\sharp)$. The claim follows after taking a fibrant replacement of the map $t_*(f)$.
\end{proof}

We finish this section by studying the case where $f: \CC^\dagger \to \DD^\dagger$ is a functor between strict 2-categories equipped with a marking. For the rest of the section we will denote $\bcat{C}^\dagger=\Nsc(\CC)^\dagger$ (resp. $\bcat{D}^\dagger:=\Nsc(\DD)^\dagger$) where the marking comes from the marking in $\CC^\dagger$ (resp. $\DD^\dagger$). Our goal is to relate (nerves of)  the comma 2-categories of \autoref{defn:upslice} with the fibres of the free 2-Cartesian fibration thus simplifying the conditions of \autoref{thm:cofinality}.

\begin{definition}
  Let $f:\CC \to \DD$ be a functor of strict 2-categories. We define a new 2-category $\Fr(\CC)$ as follows:
  \begin{itemize}
    \item Objects are given by morphisms $u:d_0 \to f(c_0)$ where $d_0 \in \DD$ and $c_0 \in \CC$.
    \item A morphism $\varphi_0:u \to v$ from $u: d_0 \to f(c_0)$  to $v:d_1 \to f(c_1)$ is given by a pair of morphisms $a_0: d_0 \to d_1$ and $\alpha_0:c_0 \to c_1$ and a 2-morphism $\theta_{\varphi_0}:f(\alpha)\circ u \xRightarrow{}v \circ a$.
    \item A 2-morphism  $\epsilon:\varphi_0 \to \varphi_1$ is given by a pair of 2-morphisms $\psi:a_0 \xRightarrow{} a_1$ and $\zeta: \alpha_0 \xRightarrow{} \alpha_1$ such that the followign diagram commutes
    \[
      \begin{tikzcd}
        f(\alpha_0)\circ u \arrow[r,nat,"f(\zeta)* u"] \arrow[d,nat,"\theta_{\varphi_0}"] & f(\alpha_1) \circ u \arrow[d,nat,"\theta_{\varphi_1}"]\\
          v \circ a_0 \arrow[r,nat,"v* \psi"] & v \circ a_1
      \end{tikzcd}
    \]
  \end{itemize}
There is an obvious 2-functor $\Fr(\CC) \to \DD$ which is easily verified to be a 2-Cartesian fibration. In particular one observes the following:
\begin{itemize}
  \item A morphism in $\Fr(\CC)$ is Cartesian if the associated morphism $\alpha: c_0 \to c_1$ is an equivalence in $\CC$ and the 2-morphism $\varphi_0$ is invertible.
  \item A 2-morphism in $\Fr(\CC)$ is coCartesian if the associated 2-morphism $\zeta: \alpha_0 \xRightarrow{} \alpha_1$ is invertible.
\end{itemize}
One immediately sees that the fibres of $\Fr(\CC)$ are precisely the categories $\CC_{d\upslash}$ of \autoref{defn:upslice}.
\end{definition}

\begin{remark}
  As a direct consequence of \cite[Theorem 4.29]{AGS_CartI} we see that the induced morphism $\Nsc(\Fr(\CC))\to \bcat{D}$ is a 2-Cartesian fibration. We further observe that there is an strict 2-functor $\CC \to \Fr(\CC)$. We will see at the end of the section that $\Fr(\CC)$ is another model for the free 2-Cartesian fibration on the functor $f$.
\end{remark}

\begin{remark}
  Suppose we are given a morphism of marked strict 2-categories $f: \CC^\dagger \to \DD^\dagger$. Then we can construct a marked 2-category $\Fr(\CC)^\dagger$ by declaring an edge in $\Fr(\CC)$ to be marked if and only if it is Cartesian or the associated morphism $\alpha:c_0 \to c_1$ is marked in $\CC^\dagger$ and the 2-morphism $\varphi_0$ is invertible. We denote by $\Nsc(\Fr(\CC))^\dagger$ the associated $\bS$ simplicial set.
\end{remark}

Before we continue, we must provide a good characterization of the simplices of $\Nsc(\Fr(\CC))$. As it turns out, we can view $\Nsc(\Fr(\CC))$ as a simplicial subset of $\FF(\bcat{C})$, and we will use this to provide an alternate characterization of the simplices of the former. To this end we fix some terminology. Let us call a 2-simplex of $\Delta^1\widehat{\otimes} \Delta^n$ \emph{contrary} if it is scaled. 

An $n$-simplex $\sigma$ of $\FF(\bcat{C})$  consists of a commutative diagram 
\[
\begin{tikzcd}
\Delta^1\widehat{\otimes}\Delta^n \arrow[r,"\phi^\sigma"] & \bcat{D}\\
\{1\}\times \Delta^n \arrow[r,"\rho^\sigma"]\arrow[u] & \bcat{C}\arrow[u,"f"'] 
\end{tikzcd}
\]
We will call such a simplex \emph{tame} if it sends contrary $2$-simplices to \emph{identities}.\footnote{Notice that this definition is only sensible because $\bcat{D}:=\Nsc(\DD)$. Otherwise, there is no good notion of identity 2-simplices which are neither left nor right degenerate.} By construction, the tame simplices form a simplicial subset of $\FF(\bcat{C})$, which we will denote by $\on{Tame}(\bcat{C},\bcat{D})$. When this is equipped with the marking and biscaling induced by $\FF(\bcat{C})$, we denote it by $\on{Tame}(\bcat{C},\bcat{D})^\dagger$.

\begin{lemma}
  There is an isomorphism 
  \[
  \on{Tame}(\bcat{C},\bcat{D})^\dagger\cong \Nsc(\Fr(\CC))^\dagger
  \]
  of marked-biscaled simplicial sets.
\end{lemma}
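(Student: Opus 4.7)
The plan is to exhibit the isomorphism level-wise by unpacking $n$-simplices on both sides, check naturality in $[n]$, and verify the correspondence of markings and both scalings separately. First, I would unpack an $n$-simplex of $\Nsc(\Fr(\CC))$: since $\Fr(\CC)$ is a strict 2-category with objects $u : d \to f(c)$, such a simplex consists of objects $u_i : d_i \to f(c_i)$, morphisms $(a_{ij}, \alpha_{ij}, \theta_{ij})$ for $i < j$ with $\theta_{ij} : f(\alpha_{ij}) \circ u_i \nat u_j \circ a_{ij}$, and a strict cocycle condition across triples $i < j < k$.

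Second, I would analyze the contrary (i.e.\ scaled) 2-simplices of $\Delta^1 \widehat{\otimes} \Delta^n$. These fall into essentially two classes: those lying entirely in $\{0\} \times \Delta^n$ or $\{1\} \times \Delta^n$, and the ``vertical-first'' crossings $\{(0,i),(1,i),(1,j)\}$. Crucially, the ``horizontal-first'' triangle $\{(0,i),(0,j),(1,j)\}$ is \emph{not} contrary, so its image in $\bcat{D}$ carries genuine 2-cell data. The bijection then sends a tame $(\phi^\sigma, \rho^\sigma)$ to the 2-functor $[n] \to \Fr(\CC)$ whose underlying strict functors $[n] \to \DD$ and $[n] \to \CC$ come from $\phi^\sigma|_{\{0\}\times \Delta^n}$ and $\rho^\sigma$ respectively (strictness being exactly what tameness of triangles within $\{\epsilon\} \times \Delta^n$ enforces), and whose 2-cells $\theta_{ij}$ are read off from the triangles $\{(0,i),(0,j),(1,j)\}$. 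Tameness of the contrary triangle $\{(0,i),(1,i),(1,j)\}$ forces the diagonal edge $\{(0,i),(1,j)\}$ to equal $f(\alpha_{ij}) \circ u_i$ strictly, furnishing the correct source for $\theta_{ij}$. The inverse map is tautological: extend a 2-functor $[n] \to \Fr(\CC)$ to $\phi^\sigma$ and $\rho^\sigma$ by sending each simplex to its prescribed image, assigning identities to all contrary simplices. Naturality in $[n]$ is automatic from the construction.

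Third, I would check the decorations. A marked edge of $\FF(\bcat{C})^\dagger$ (Definition~\ref{def:freemarking}) factors through $\Delta^1 \times \Delta^1$ with the restriction to $\{1\} \times \Delta^1$ factoring through a marked edge of $\bcat{C}^\dagger$; under the bijection this yields $(a,\alpha,\theta)$ with $\alpha$ marked in $\CC^\dagger$ and $\theta$ invertible (or the Cartesian case), matching the marking of $\Fr(\CC)^\dagger$. A triangle is lean in $\FF(\bcat{C})$ iff its restriction to $\{1\} \times \Delta^2$ is thin, i.e.\ the 2-cell $\zeta$ in $\CC$ between the $\alpha$'s is an identity, which is the coCartesian condition on $\Fr(\CC)$; thinness additionally requires the $\{0\} \times \Delta^2$ restriction to be thin, matching strict thinness in $\Nsc(\Fr(\CC))$.

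The main obstacle lies in the second step: checking methodically that every contrary 2-simplex of $\Delta^1 \widehat{\otimes} \Delta^n$ corresponds to a strict identity in the 2-functor data while every non-contrary one carries a genuine 2-cell, and that the higher-dimensional simplices encode precisely the cocycle conditions required for a strict 2-functor to $\Fr(\CC)$. The definition of $\Delta^1 \widehat{\otimes}(-)$ has been engineered so that tameness translates to strictness, so the bookkeeping is tedious but mechanical.
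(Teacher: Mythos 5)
There is a genuine gap in your second step, and it propagates. The scaled (``contrary'') triangles of $\Delta^1\widehat{\otimes}\Delta^n$ are not what you claim: since the $\Delta^n$-factor carries the flat scaling, the Gray-product scaling requires the projection of a $2$-simplex to $\Delta^n$ to be degenerate, so the only nondegenerate contrary triangles are the ``vertical-first'' ones on $\{(0,i),(1,i),(1,j)\}$. Triangles lying entirely in $\{0\}\times\Delta^n$ or $\{1\}\times\Delta^n$ with nondegenerate image in $\Delta^n$ are \emph{not} contrary, so tameness imposes nothing on them; in particular a tame simplex does \emph{not} restrict to strict functors $[n]\to\DD$ and $[n]\to\CC$. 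Correspondingly, your description of the $n$-simplices of $\Nsc(\Fr(\CC))$ as strict $2$-functors with a ``strict cocycle condition'' is also incorrect: the scaled (Duskin-type) nerve of the strict $2$-category $\Fr(\CC)$ has simplices containing compositor $2$-cells $(\psi_{ijk},\zeta_{ijk})$ filling each triangle, subject to a tetrahedron condition, and these are exactly matched by the (unconstrained) images of the triangles inside $\{0\}\times\Delta^n$ and $\{1\}\times\Delta^n$. As written, your level-wise bijection compares the ``doubly strict'' simplices on one side with the strict-functor nerve on the other, i.e.\ proper simplicial subsets of both objects, so it does not establish the lemma; the correct dictionary keeps the contrary triangles (forcing the diagonal edges to equal $f(\alpha_{ij})\circ u_i$ on the nose, as you say), reads the $\theta_{ij}$ off the triangles $\{(0,i),(0,j),(1,j)\}$, and reads the compositors off the two slices. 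A related symptom appears in your decoration check: ``thin'' in $\Nsc(\CC)$ means the $2$-cell is \emph{invertible}, not an identity, so leanness corresponds to $\zeta$ invertible, which is exactly the coCartesian condition on $\Fr(\CC)$.

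Note also that the paper takes a shortcut you may want to adopt once the dictionary is repaired: since $\bcat{C}$ and $\bcat{D}$ are nerves of strict $2$-categories, one shows $\on{Tame}(\bcat{C},\bcat{D})$ is $3$-coskeletal (as is $\Nsc(\Fr(\CC))$), so the isomorphism only needs to be exhibited and checked on $3$-truncations, avoiding the full bookkeeping in all dimensions that your plan requires.
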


\begin{proof}
  We will prove that the underlying simplicial set $\on{Tame}(\bcat{C},\bcat{D})$ is 3-coskeletal, reducing the proof to a straightforward check on 3-truncations. 
  
  Consider a morphism $\partial \Delta^n \to \on{Tame}(\bcat{C},\bcat{D})$ where $n>3$. This corresponds to a diagram 
  \[
  \begin{tikzcd}
  \Delta^1\times \partial \Delta^n \arrow[r,"\phi"] & \bcat{D}\\
  \{1\}\times \partial \Delta^n \arrow[r,"\rho"'] \arrow[u] & \bcat{C}\arrow[u,"f"']
  \end{tikzcd}
  \]
  We now note that $\bcat{C}$ and $\bcat{D}$ are, themselves 3-coskeletal, and thus, in particular, they admit unique horn fillers for all horns of dimension 5 or higher. Using, e.g., the filtration of \cite[Prop. 2.1.2.6]{HTT}, we see that $\phi$ has a unique extension to a map $\Delta^1\times \Delta^n\to \bcat{D}$. Since $\rho$ clearly has a unique extension to a map $\Delta^n\to \bcat{C}$, we can obtain an extension 
  \begin{equation}\label{eqn:bndryinslice}
  \begin{tikzcd}
  \Delta^1\times  \Delta^n \arrow[r,"\widetilde{\phi}"] & \bcat{D}\\
  \{1\}\times \Delta^n \arrow[r,"\widetilde{\rho}"'] \arrow[u] & \bcat{C}\arrow[u,"f"']
  \end{tikzcd}\tag{$\ast$}
  \end{equation}
  of the diagram above. 
  
  Moreover, since $n>3$, every 2-simplex of $\Delta^1\times \Delta^n$ is contained in $\Delta^1\times \partial \Delta^n$. Consequently, the fact that $\phi$ arises from a map $\partial\Delta^n\to \on{Tame}(\bcat{C},\bcat{D})$ implies that the diagram (\ref{eqn:bndryinslice})  defines an $n$-simplex in $\on{Tame}(\bcat{C},\bcat{D})$. 
  
  The remaining low-dimensional checks are left to the reader.
\end{proof}

\begin{remark}
  Note that the argument above can in fact be repurposed to show that $\FF(\bcat{C})$ is itself 3-coskeletal in our present setting. However, in spite of their equivalence,  $\FF(\bcat{C})$ will not be isomorphic to $\Nsc(\Fr(\CC))$, as the former has significantly more 1- and 2-simplices.
\end{remark}

\begin{remark}
  By construction, the canonical morphism $\bcat{C}\to \FF(\bcat{C})$ factors through $\on{Tame}(\bcat{C},\bcat{D})$. 
\end{remark}

\begin{lemma}
  Let $\sigma:\Delta^n\to \on{Tame}(\bcat{C},\bcat{D})$ be an $n$-simplex. Then each extension $E_j^\ast(\sigma):\Delta^{n+1}\to \FF(\bcat{C})$ factors through $\on{Tame}(\bcat{C},\bcat{D})$.  
\end{lemma}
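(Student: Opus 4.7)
The plan is to unwind the definition of tameness and verify that $\phi^{E_j^\ast(\sigma)} = \phi^\sigma \circ E_j$ carries every contrary 2-simplex of $\Delta^1\widehat{\otimes}\Delta^{n+1}$ to an identity 2-cell of $\bcat{D}=\Nsc(\DD)$. Since $\sigma$ is tame by hypothesis, $\phi^\sigma$ already sends every contrary 2-simplex of $\Delta^1\widehat{\otimes}\Delta^n$ to an identity, and it also sends every degenerate 2-simplex to an identity automatically. Thus it suffices to show that $E_j$ carries each contrary 2-simplex of $\Delta^1\widehat{\otimes}\Delta^{n+1}$ into the union of contrary and degenerate 2-simplices of $\Delta^1\widehat{\otimes}\Delta^n$.

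First, I would classify the contrary 2-simplices of $\Delta^1\widehat{\otimes}\Delta^{n+1}$, where $\Delta^{n+1}$ carries its minimal \textbf{MB} structure. All three clauses of the thinness definition for the $\widehat{\otimes}$-product force the $\Delta^{n+1}$-component to be a degenerate 2-simplex of $\Delta^{n+1}$. A short case analysis then shows that, up to 2-simplices which are degenerate in $\Delta^1\times\Delta^{n+1}$, the only non-trivially contrary 2-simplices are those of the form
\[
\tau_{a,c} = (0,a) \to (1,a) \to (1,c), \qquad 0\le a<c\le n+1,
\]
arising from the clause in which the $\Delta^{n+1}$-component is degenerate along its initial edge and the $\Delta^1$-component is $0\to 1\to 1$.

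Second, I would compute $E_j(\tau_{a,c})$ directly from the explicit formula for $E_j$. The computation splits into three subcases according to the relative position of $a$, $c$, and $j$. When $c\le j$, the map $E_j$ acts as the identity on the vertices of $\tau_{a,c}$, producing a 2-simplex of the same form in $\Delta^1\widehat{\otimes}\Delta^n$ and hence contrary. When $a>j$, both of the first two vertices are collapsed into the top row $\{1\}\times\Delta^n$, giving a 2-simplex degenerate on its edge $\{0,1\}$. When $a\le j<c$, the image is $\tau_{a,c-1}$, which is contrary for $c>a+1$ and degenerate precisely when $c=j+1=a+1$. In every case, $E_j(\tau_{a,c})$ is either degenerate or contrary in $\Delta^1\widehat{\otimes}\Delta^n$.

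Combining these two steps, $\phi^\sigma\circ E_j$ sends every contrary 2-simplex of $\Delta^1\widehat{\otimes}\Delta^{n+1}$ to an identity 2-cell of $\bcat{D}$, so $E_j^\ast(\sigma)$ factors through $\on{Tame}(\bcat{C},\bcat{D})$. The only delicate point is the enumeration of contrary 2-simplices of $\Delta^1\widehat{\otimes}\Delta^{n+1}$, which requires attention to all three clauses of the $\widehat{\otimes}$-thinness together with the minimality of the $\mathbf{MB}$ structure on $\Delta^{n+1}$; once that classification is in hand, the rest is a routine case analysis of the action of $E_j$ on the model 2-simplices $\tau_{a,c}$.
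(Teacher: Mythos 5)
Your proof is correct and is precisely the definitional unwinding the paper itself invokes: the paper's proof of this lemma is the one-line remark that it "follows immediately from unraveling the definitions." Your classification of the nondegenerate contrary 2-simplices of $\Delta^1\widehat{\otimes}\Delta^{n+1}$ as the simplices $\tau_{a,c}=(0,a)\to(1,a)\to(1,c)$ and the three-way case analysis of $E_j(\tau_{a,c})$ supply exactly those omitted details, and they check out.
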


\begin{proof}
  This follows immediately from unraveling the definitions.
\end{proof}

\begin{definition}
  We denote by $\on{Tame}(\bcat{C},\bcat{D})^\natural$ the marking and biscaling induced by $\FF(\bcat{C})^\natural$. Similarly, we denote by $\on{Tame}(\bcat{C},\bcat{D})^\dagger$ the marking and biscaling induced by $\FF(\bcat{C})^\dagger$.
\end{definition}

\begin{proposition}\label{prop:tameEquivC}
  The morphism $\bcat{C}^\natural \to \on{Tame}(\bcat{C},\bcat{D})^\natural$ is \textbf{MB}-anodyne over $\bcat{D}$. 
\end{proposition}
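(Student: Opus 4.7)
The plan is to repeat the filtration argument from the proof of \autoref{thm:thebigfreeboi} verbatim, except that every simplex in sight is required to be tame. Concretely, I would define subsimplicial sets $Z_n^{\on{tame}} \subseteq \on{Tame}(\bcat{C},\bcat{D})$ inductively as follows: $Z_{-1}^{\on{tame}} := \bcat{C}$, and for $n\geq 0$ let $Z_n^{\on{tame}}$ consist of all simplices of $Z_{n-1}^{\on{tame}}$ together with all $n$-simplices of $\on{Tame}(\bcat{C},\bcat{D})$ and all $(n+1)$-simplices of the form $E_j^\ast(\sigma)$. The lemma immediately preceding the proposition ensures $E_j^\ast(\sigma)$ is tame whenever $\sigma$ is, so this does define a subsimplicial set of $\on{Tame}(\bcat{C},\bcat{D})$, and the sequence $Z_{-1}^{\on{tame}} \hookrightarrow Z_0^{\on{tame}} \hookrightarrow Z_1^{\on{tame}} \hookrightarrow \cdots$ exhausts $\on{Tame}(\bcat{C},\bcat{D})$.

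It then suffices to show that each step $\gamma_{n-1}^{\on{tame}}: Z_{n-1}^{\on{tame}} \hookrightarrow Z_n^{\on{tame}}$ is \bS-anodyne over $\bcat{D}$. For this I would order the non-degenerate tame $n$-simplices outside $Z_{n-1}^{\on{tame}}$ and adjoin them one at a time, together with their extensions, following the three-case analysis (generic $\rho$, $\rho$ whose $\alpha$-face is already present, $\rho$ itself already present) used in the proof of \autoref{thm:thebigfreeboi}. The same pushouts along inner-horn inclusions and along horns of type \ref{mb:2Cartesianmorphs} carry over word for word, as do the pushouts of types \ref{mb:composeacrossthin}, \ref{mb:coCartoverThin}, and the remaining saturation moves that endow the new simplices with the correct markings and scalings.

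The only conceptual point to check, and the main thing to verify, is that each pushout can genuinely be performed inside $\on{Tame}(\bcat{C},\bcat{D})$ rather than inside $\FF(\bcat{C})$. This reduces entirely to the preceding lemma: every simplex appearing in a filler in any of these pushouts is either a face of a tame $n$-simplex, an extension $E_j^\ast$ of such a simplex, or a face of such an extension, and in each of these cases tameness is preserved. Since the tameness condition constrains only the images of contrary 2-simplices (which are sent to identities throughout our filtration), no new subtleties arise from the \textbf{MB} decorations. Chaining the resulting sequence of \bS-anodyne extensions by transfinite composition yields the proposition.
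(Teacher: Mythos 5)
Your proposal is correct and matches the paper's argument: the paper likewise proves this by rerunning the filtration from \autoref{thm:thebigfreeboi} with each $Z_n$ redefined to consist of the $n$-simplices of $\on{Tame}(\bcat{C},\bcat{D})$ together with their $j$-extensions, the key point being exactly the preceding lemma that extensions of tame simplices remain tame. Your write-up simply spells out in more detail what the paper leaves as a one-line reduction.
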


\begin{proof}
  This is identical to the proof of \autoref{thm:thebigfreeboi} once we redefine $Z_n$ to consist of $n$-simplices of $\on{Tame}(\bcat{C},\bcat{D})$, together with $j$-extensions of these simplices.
\end{proof}

\begin{theorem}\label{thm:comparisonofslices}
  Let $f:\CC^\dagger \to \DD^\dagger$ be a functor between marked strict 2-categories, and let $f:\bcat{C}^\dagger\to \bcat{D}^\dagger$ denote the induced morphism of \textbf{MS} simplicial sets. Then the following hold:
  \begin{itemize}[noitemsep]
    \item There exists a commutative diagram over $\bcat{D}$
    \[
      \begin{tikzcd}
        \bcat{C}^\dagger \arrow[r] \arrow[d] & \Nsc(\Fr(\CC)) \\
        \mathbb{F}(\bcat{C})^\dagger \arrow[ur,"\Xi",swap] &
      \end{tikzcd}
    \]
    such that each morphism in the diagram is a 2-Cartesian equivalence.
    \item For every $d \in \DD$ the map $\Xi$ induces an equivalence of \textbf{MS} simplicial sets $\bcat{C}_{d\upslash}^\dagger \xrightarrow{\simeq} \Nsc(\CC_{d\upslash}^{\dagger})$.
  \end{itemize}
\end{theorem}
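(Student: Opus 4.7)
The plan is a lifting-plus-2-out-of-3 argument followed by a fibrewise comparison via the Grothendieck construction. I would first collect three observations. By the isomorphism $\on{Tame}(\bcat{C},\bcat{D})^\dagger \cong \Nsc(\Fr(\CC))^\dagger$ combined with the marked enhancement of \autoref{prop:tameEquivC} (obtained by factoring through the $\natural$-marked version and then applying the pushout trick from the proof of \autoref{cor:decormarked} to install the extra marked edges), the natural inclusion $\bcat{C}^\dagger \hookrightarrow \Nsc(\Fr(\CC))^\dagger$ is \bS-anodyne over $\bcat{D}$. By \autoref{cor:decormarked} itself, the canonical inclusion $\gamma_{\bcat{C}}\colon \bcat{C}^\dagger \hookrightarrow \mathbb{F}(\bcat{C})^\dagger$ is \bS-anodyne. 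Finally, since $\Fr(\CC)^\dagger \to \DD$ is a 2-Cartesian fibration of strict 2-categories, \cite[Theorem 4.29]{AGS_CartI} ensures that $\Nsc(\Fr(\CC))^\dagger \to \bcat{D}$ is fibrant in $(\mbsSet)_{/\bcat{D}}$.

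To construct $\Xi$ I would observe that $\gamma_{\bcat{C}}$ is a trivial cofibration and $\Nsc(\Fr(\CC))^\dagger \to \bcat{D}$ is fibrant, so the standard lifting property produces a morphism $\Xi\colon \mathbb{F}(\bcat{C})^\dagger \to \Nsc(\Fr(\CC))^\dagger$ over $\bcat{D}$ rendering the triangle commutative. Since both the horizontal and vertical inclusions are \bS-anodyne, 2-out-of-3 forces $\Xi$ to be a 2-Cartesian equivalence as well, settling the first item.

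For the fibrewise equivalence, note that both $\mathbb{F}(\bcat{C})^\dagger \to \bcat{D}$ and $\Nsc(\Fr(\CC))^\dagger \to \bcat{D}$ are fibrant. Applying the bicategorical straightening of \cite[Theorem 3.85]{AGS_CartII}, $\Xi$ corresponds to a pointwise weak equivalence of $\mssSet$-valued diagrams on $\mathfrak{C}[\bcat{D}]^\op$, and at each $d\in \DD$ the value of the straightening is equivalent to the fibre over $d$. Unwinding, the fibre of $\mathbb{F}(\bcat{C})^\dagger \to \bcat{D}$ at $d$ is $\bcat{C}_{d\upslash}^\dagger$ by definition, while the fibre of $\Nsc(\Fr(\CC))^\dagger \to \bcat{D}$ is $\Nsc(\CC_{d\upslash}^\dagger)$, since $\Nsc$ preserves pullbacks and $\CC_{d\upslash}^\dagger$ is by construction the fibre of $\Fr(\CC)^\dagger \to \DD$ at $d$. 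The induced comparison is precisely the restriction of $\Xi$ to fibres, which is therefore an equivalence of \textbf{MS} simplicial sets. The main technical obstacle will be checking the marked analogue of \autoref{prop:tameEquivC} and confirming that the decorations on $\Nsc(\Fr(\CC))^\dagger$ and $\on{Tame}(\bcat{C},\bcat{D})^\dagger$ coincide on the nose, since the two markings are specified by superficially different conditions on 1-simplices.
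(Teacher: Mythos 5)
Your construction of $\Xi$, and your fibrewise argument, both rest on the claim that the $\dagger$-decorated objects are fibrant over $\bcat{D}$, and this is where the argument genuinely breaks. Fibrant objects of $(\mbsSet)_{/\bcat{D}}$ are outer 2-Cartesian fibrations, and the right lifting property against the generators \ref{mb:2Cartesianmorphs} forces every marked edge of a fibrant object to be Cartesian. But the marking of $\Nsc(\Fr(\CC))^\dagger$ (and likewise of $\mathbb{F}(\bcat{C})^\dagger$, cf.\ \autoref{def:freemarking}) deliberately contains edges whose $\CC$-component $\alpha$ is an arbitrary marked $1$-morphism of $\CC^\dagger$; unless $\alpha$ is an equivalence such an edge is not Cartesian, so as soon as $\CC^\dagger$ or $\DD^\dagger$ carries a marking beyond the equivalences, neither $\Nsc(\Fr(\CC))^\dagger \to \bcat{D}$ nor $\mathbb{F}(\bcat{C})^\dagger \to \bcat{D}$ is fibrant. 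The citation of \cite[Theorem 4.29]{AGS_CartI} only covers the natural decorations (Cartesian edges marked) --- this is precisely why \autoref{def:upcomafibrant} has to pass to a fibrant replacement. Consequently you cannot obtain $\Xi$ by lifting the trivial cofibration $\gamma_{\bcat{C}}$ against $\Nsc(\Fr(\CC))^\dagger \to \bcat{D}$, and you cannot feed the $\dagger$-objects into the straightening equivalence to read off the fibrewise statement.

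The paper's proof is organized exactly to avoid this: it first reduces to the natural markings, where \autoref{prop:freefib2Cart} and \cite[Theorem 4.29]{AGS_CartI} really do produce 2-Cartesian fibrations, and it constructs $\Xi$ not by an abstract lift from $\bcat{C}^\natural$ but by explicitly defining $\Xi_1$ on the piece $Z_1$ of the filtration of \autoref{thm:thebigfreeboi} (which contains all $1$-simplices) and then extending along the \bS-anodyne map $Z_1 \to \FF(\bcat{C})$. This explicit control on the $1$-skeleton is not cosmetic: it is what guarantees that $\Xi$ carries the extra $\dagger$-marked edges of $\FF(\bcat{C})^\dagger$ into $\dagger$-marked edges of $\Nsc(\Fr(\CC))^\dagger$, which is needed even to have a map of $\dagger$-decorated objects over $\bcat{D}$; an abstract lift produced from $\bcat{C}^\natural$ gives you no such compatibility. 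The general marked case, and with it the fibrewise comparison, is then deduced from the $\natural$-case via the homotopy pushout squares in $(\mbsSet)_{/\bcat{D}}$ (adding the markings along $\bcat{C}^\natural \to \bcat{C}^\dagger$, as in \autoref{cor:decormarked}), not from fibrancy of the $\dagger$-objects. Your use of the Tame model, of \autoref{prop:tameEquivC}, and of the pushout trick are all consistent with the paper; what is missing is the reduction to the naturally marked situation together with an explicit, marking-compatible construction of $\Xi$ on $1$-simplices.
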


\begin{proof}
  First let us assume that the marking on both $\CC^\dagger=\CC^\natural$ and $\DD^\dagger=\DD^\dagger$ only consists of equivalences so that the marking on both $\mathbb{F}(\bcat{C}) $ and $\Nsc(\Fr(\CC)^\natural)$ is precisely given by Cartesian edges. Recall the filtration defined in \autoref{thm:thebigfreeboi}. First we will define a morphism
   \[
      \begin{tikzcd}
        \bcat{C}^\natural \arrow[r] \arrow[d] & \Nsc(\Fr(\CC)^\natural) \\
        Z_1 \arrow[ur,"\Xi_1",swap] &
      \end{tikzcd}
    \]
    Since $Z_1 \to \FF(\bcat{C})$ is \bS-anodyne and $\Nsc(\Fr(\CC))$ is a 2-Cartesian fibration, we can pick an extension of $\Xi_1$ to the desired $\Xi$. Observe that we can map the objects of $Z_1$ isomorphically to those of $\Nsc(\Fr(\CC))$. Given  $e:\Delta^1 \to Z_1$ we see that this data precisely amounts to morphisms $u_i:d_i \to f(c_i)$ for $i=0,1$, $a: d_0 \to d_1$, $\alpha:c_0 \to c_1$ and $g:d_0 \to f(c_1)$ together with a pair of 2-morphisms $\epsilon:g \xRightarrow{\simeq}f(\alpha) \circ u_0$ and $\theta: g \xRightarrow{} u_1 \circ a$ such that $\epsilon$ is invertible. We can then map $e$ to an edge $\Xi_1(e)$ defined by the same 1-morphisms but with associated 2-morphism $\theta \circ \epsilon^{-1}$. One perfoms a similar construction for mapping the non-degenerate 2-simplices contained in $Z_1$ thus giving a definition for $\Xi_1$.

    We can now observe that in the case that $\CC^\dagger$ comes equipped with a general marking (containing the equivalences) we have a homotopy pushout $(\on{Set}^{\mathbf{mb}}_\Delta)_{/\bcat{D}}$
    \[
      \begin{tikzcd}[ampersand replacement=\&]
        \mathbb{F}(\bcat{C})^\natural \arrow[r] \arrow[d] \& \Nsc(\Fr(\CC)^\natural) \arrow[d] \\
          \mathbb{F}(\bcat{C})^\dagger \arrow[r] \& \Nsc(\Fr(\CC)^\dagger) 
      \end{tikzcd}
    \]
    This shows it will suffice to prove the case where only the equivalences are marked. This follows from 2-out-of-3 after noting that $\bcat{C}^\natural \to \Nsc(\Fr(\CC)^\natural)$ is an equivalence. This follows immediately from \autoref{prop:tameEquivC}.
\end{proof}

\begin{remark}
  The significance of this result is twofold. Most importantly, it shows that, when considering diagrams indexed over strict 2-categories, the criteria for marked cofinality can be expressed in terms of the strict slice 2-categories. Consequently, the criteria for cofinality become much easier to explicitly check in this case.
  
  Of lesser significance, but still of interest, is the second consequence. Since we can identify $\Nsc(\CC_{d\upslash})$ and $\bcat{C}_{d\upslash}$, the criteria of \autoref{thm:cofinality} precisely agree with those of \cite[Thm 4.0.1]{AGSQuillen}. \autoref{thm:cofinality} thus generalizes \cite[Thm 4.0.1]{AGSQuillen}, as expected.
\end{remark}

  \end{document}